\DeclareMathOperator{\Hom}{Hom}
\DeclareMathOperator{\Ann}{Ann}
\DeclareMathOperator{\End}{End}
\DeclareMathOperator{\Spec}{Spec}
\DeclareMathOperator{\HH}{H}
\DeclareMathOperator{\id}{id}
\DeclareMathOperator{\im}{im}
\DeclareMathOperator{\pr}{pr}
\DeclareMathOperator{\Pic}{Pic}
\DeclareMathOperator{\Ext}{Ext}
\DeclareMathOperator{\rk}{rk}
\theoremstyle{definition}
\newtheorem{definition}{Definition}[section]
\newtheorem{propdef}[definition]{Definition/Proposition}
\newtheorem{exmp}[definition]{Example}
\newtheorem{rem}[definition]{Remark}
\theoremstyle{theorem}
\newtheorem{thm}[definition]{Theorem}
\newtheorem{cor}[definition]{Corollary}
\newtheorem{lem}[definition]{Lemma}
\newtheorem{prop}[definition]{Proposition}
\begin{document}
\title{Constructing all Genus 2 Curves with Supersingular Jacobian}
\date{\today}
\begin{abstract}
L. Moret-Bailly constructed families $\mathfrak{C}\rightarrow \mathbb{P}^1$ of genus 2 curves with supersingular Jacobian. In this paper we first classify the reducible fibers of a Moret-Bailly family using linear algebra over a quaternion algebra. The main result is an algorithm that exploits properties of two reducible fibers to compute a hyperelliptic model for any irreducible fiber of a Moret-Bailly family.
\end{abstract}

\author[Pieper]{Andreas Pieper}
\address{
  Andreas Pieper,
  Institut für Algebra und Zahlentheorie,
  Universität Ulm,
  Helmholtzstrasse 18,
  89081 Ulm,
  Germany
}
\email{andreas.pieper@uni-ulm.de}

\maketitle
\tableofcontents

\section*{Introduction}
Let $k$ be an algebraically closed field of characteristic $p>0$. Recall that an Abelian variety $A/k$ is called supersingular if it is isogenous to a product of supersingular elliptic curves. The supersingular locus in the moduli space of $g$-dimensional principally polarized Abelian varieties has been intensively studied. K.-Z. Li and F. Oort \cite{Li-Oort} prove that the supersingular locus is birational to a variety explicitly described in terms of semi-linear algebra (Dieudonn\'e theory). This allows one to determine the number of irreducible components of the supersingular locus and shows that it is equidimensional of dimension $\lfloor \frac{g^2}{4} \rfloor$.
\par If $g=2$ this result says more precisely that the supersingular locus is the union of rational curves, as was proven earlier by T. Katsura and Oort in \cite{KatsuraOort}. They prove that every supersingular principally polarized Abelian variety is a fiber of one of the families $\mathcal{A}\rightarrow \mathbb{P}^1$ constructed by Moret-Bailly in \cite{Moret-Bailly}. Therefore the supersingular locus is the union of the images of the $\mathbb{P}^1$s from Moret-Bailly's construction.
\par This construction works as follows for $p>2$ (the case $p=2$ is similar but slightly different): Indeed fix a supersingular elliptic curve $E/k$. Then the kernel of geometric Frobenius is $E[F]\cong \alpha_p$. Since $\Hom(\alpha_p, \alpha_p^2)=k^2$, the subgroup schemes isomorphic to $\alpha_p$ in $E^2$ are parametrized by $\mathbb{P}^1$. Moret-Bailly studies the Abelian scheme $\mathcal{A} \rightarrow \mathbb{P}^1$ obtained as a quotient of $E^2_{\mathbb{P}^1}$ by a certain finite flat group scheme. (Here and in the following for two $k$-schemes $S$ and $T$ the notation $S_T$ is an abbreviation for $S\times_k T$). This puts all the possible ways of taking a quotient of $E^2$ by $\alpha_p$ into a family of Abelian surfaces over $\mathbb{P}^1$.
\par Furthermore Moret-Bailly explains how to equip $\mathcal{A}$ with a principal polarization. For that purpose we need a polarization $\eta$ on $E^2$ with $\ker(\eta)=E^2[F]$. Then Moret-Bailly proves that $\eta_{\mathbb{P}^1}$ descends to a principal polarization $\lambda$ on $\mathcal{A}$. He shows that there exists a relative divisor $\mathfrak{C}\subset \mathcal{A}$ inducing $\lambda$, a ``theta divisor''. The fibers of $\mathfrak{C} \rightarrow \mathbb{P}^1$ are either smooth curves of genus 2 or two elliptic curves intersecting transversely. In this way we get finitely many families of curves of genus 2 corresponding to the equivalence classes of $\eta$. (Here the equivalence relation is conjugation by $\text{Aut}(E^2)$. See \cite[Theorem 5.7]{KatsuraOort} for a discussion of the cardinality of the equivalence classes.) The Jacobians of those curves are supersingular as they are isogenous to $E^2$.
\par The goal of the present paper is to make this theory effective by giving an algorithm that computes all the non-singular fibers of a Moret-Bailly family $\mathfrak{C}\rightarrow \mathbb{P}^1$. For that purpose we need to be able to compute quotients by $\alpha_p$. In the paper \cite{ThetaIsog} D. Lubicz and D. Robert spread the idea that separable isogenies between higher dimensional Abelian varieties should be computed using Mumford's theory of theta groups. This point of view can be extended to inseparable isogenies.
\par Mumford defines for an ample line bundle $\mathcal{L}$ on an Abelian variety $A$, a group scheme $\mathcal{G}(\mathcal{L})$ with center $\mathbb{G}_m$ (the so-called theta group). Furthermore $\mathcal{G}(\mathcal{L})$ acts irreducibly on $\HH^0(A, \mathcal{L})$ such that the center acts by multiplication. But he also shows that there exists only one representation of $\mathcal{G}(\mathcal{L})$ with this property.
\par In the separable case, that is $\text{char}(k)\nmid \deg \mathcal{L}$, Mumford proves that $\mathcal{G}(\mathcal{L})$ is isomorphic to an explicit group $\mathcal{G}(\delta)$, a so-called Heisenberg group. He also writes down an explicit irreducible representation $V(\delta)$ of $\mathcal{G}(\delta)$ such that the center acts by multiplication. From the uniqueness of the representation one gets an isomorphism $\HH^0(A,\mathcal{L})\cong V(\delta)$. That is a very important result and can be used to prove theorems in a purely algebraic fashion that were classically proven over $\mathbb{C}$ via the analytic theory of theta functions.
\par Unfortunately the structure theorem for theta groups fails in the inseparable case. However in our case we are lucky as the theta group we are using is nevertheless a Heisenberg group $\mathbb{G}_m \times \alpha_p \times \alpha_p$ with group law as explained in Definition \ref{DefHeis}.
\par To construct such an isomorphism with a Heisenberg group it turns out that we need two fibers of $\mathfrak{C}\rightarrow \mathbb{P}^1$, one for each copy of $\alpha_p$. Then we exploit that there are plenty ($5p-5>2$) of fibers that are as simple as possible, namely the reducible fibers. For that purpose we develop a classification of the reducible fibers in terms of linear algebra over the quaternion algebra $B=\End(E)\otimes \mathbb{Q}$.
\par Using two reducible fibers also has the advantage that we can construct a rational function $g$ on $E^2$ from them. This rational function will be interpreted as a section of a line bundle on $E^2$, via the correspondence between divisors and line bundles. Then we can construct further rational functions on $E^2$ by translating $g$ using the group structure on $E^2$. In the end the desired curve can be obtained from the zero divisor of such a translate of $g$. We refer to the main body of the text for the details.
\par The outline of the paper is as follows: In chapter 1 we prove some results about Hermitian lattices over maximal orders in quaternion algebras. These are analogous to similar statements for Hermitian lattices over the ring of integers of a number field and the proofs work the same way. In chapter 2 we recall some facts about group schemes related to $\alpha_p$. In chapter 3 we introduce theta groups, which are the cornerstone of our method. The theory is usually formulated in terms of line bundles. Unfortunately we need to prove some technical compatibilities when we translate from line bundles to divisors.
\par Chapter 4 discusses the main theorems. First we classify the reducible fibers of a Moret-Bailly family and give an algorithm that computes them. Then we bootstrap ourselves and use two reducible fibers to compute all the irreducible fibers.\\
\textbf{Acknowledgements}: The material presented here is part of the author's PhD thesis written under the supervision of S. Wewers. I want to thank him for reading the manuscript carefully and suggesting various improvements. The author wishes to thank the anonymous referees for valuable comments. Thanks go to I. Bouw, B. Dina, J. Hanselman, and J. Sijsling for valuable discussions.

% Consider a supersingular elliptic curve $E/\mathbb{F}_p$ whose relative Frobenius satisfies $F^2+p=0$.  Such an elliptic curve exists for any $p>2$ by results of Deuring and Hasse-Weil. Indeed Deuring proves that there exists a supersingular curve  condition $F^2+p$ follows from supersingularity.
%
%
%
%
%By a result of Deuring all endomorphisms of $E$ are defined over $\mathbb{F}_{p^2}$ and
%$\mathsf{O}=\End_{\mathbb{F}_{p^2}}(E)$ is a maximal order in the quaternion $\mathbb{Q}$-algbra $B$ which is ramified only at $(p,\infty)$.
%

\section{Hermitian Forms over Quaternion Algebras}
In this section let $R$ be a Dedekind ring with field of fractions $K$ and $B$ a quaternion algebra over $K$, i.e., a central simple $K$-algebra with $[B:K]=4$. Denote by $\overline{(\cdot)}$ the standard involution on $B$. Let further $\mathsf{O}$ be a maximal $R$-order of $B$. Later we will only consider the case, where $R=\mathbb{Z}$ or a localization $\mathbb{Z}_{(l)}$ and $B$ is the unique quaternion algebra over $\mathbb{Q}$ ramified only at $(p,\infty)$. However as it does not lead to additional difficulties we will present all the statements in most general form. The goal of this section is to prove the Lemma \ref{PerfPair} about Hermitian forms over $\mathsf{O}$. It says that for a perfect hermitian pairing $(\cdot, \cdot )$ on an $\mathsf{O}$-module $M$ and a primitive vector $v\in M$ there exists a $w\in M$ with $(v,w)=1$. This lemma is analogous to the similar statement over the ring of integers of a number field and the proof works the same way.
\par We first need some definitions concerning $\mathsf{O}$-modules. We will freely use the notions of projective modules (see \cite[Chapter 1.2]{Lam}) and invertible fractional ideals (see \cite[p. 220]{Kaplansky}).
\begin{definition} Let $M$ be a finitely generated projective right $\mathsf{O}$-module. An element $v\in M$ is called a \emph{primitive vector} if the following two conditions are satisfied:
\begin{itemize}
\item $\Ann_\mathsf{O}(v)=0$
\item If $v=v' a$ for some $v'\in M, a\in \mathsf{O}$, then $a\in \mathsf{O}^\times$.
\end{itemize}
\end{definition}
Notice that the first condition follows from the second if $B$ is a division algebra. We are now ready to discuss Hermitian forms.
\begin{definition} A \emph{Hermitian $\mathsf{O}$-lattice} is a finitely generated projective right $\mathsf{O}$-module $M$ together with a pairing
$(\cdot,\cdot): M\times M\longrightarrow \mathsf{O} $ such that for all $v,v_1,v_2\in M, \lambda_1, \lambda_2\in \mathsf{O}$
\begin{itemize}
\item[i)] $(\cdot,\cdot)$ is linear in the second component:
$$(v,v_1 \lambda_1+v_2 \lambda_2)=(v,v_1) \lambda_1+(v,v_2)\lambda_2\, .$$
\item[ii)] $(\cdot,\cdot)$ is semilinear in the first component:
$$(v_1 \lambda_1+v_2 \lambda_2,v)=\overline{\lambda_1} (v_1,v) +\overline{\lambda_2}(v_2,v)\, .$$
\item[iii)] $(\cdot,\cdot)$ is hermitian symmetric: $(v_1,v_2)=\overline{(v_2,v_1)}\,.$
\end{itemize}
A pairing satisfying i) and ii) is called a sesquilinear pairing.
\end{definition}
\begin{lem} Let $M$ be a projective right $\mathsf{O}$-module. Then
$$M^\dagger=\left\lbrace \phi:M\longrightarrow \mathsf{O}|\,  \phi(v_1 \lambda_1+v_2 \lambda_2)=\overline{\lambda_1} \phi(v_1)+\overline{\lambda_2} \phi(v_2) \right\rbrace$$
is naturally a projective right $\mathsf{O}$-module. To give a sesquilinear pairing on $M$ is equivalent to giving a linear map $M\longrightarrow M^\dagger$.
\end{lem}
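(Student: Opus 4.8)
The plan is to treat $M \mapsto M^\dagger$ as a contravariant additive functor and then reduce the structural claim to the rank-one case, while the equivalence of sesquilinear forms with maps to $M^\dagger$ is a direct unwinding of the definitions.

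First I would equip $M^\dagger$ with its right $\mathsf{O}$-action. For $\phi \in M^\dagger$ and $a \in \mathsf{O}$ I set $(\phi\cdot a)(v) := \phi(v)\,a$, multiplying the output on the right. One checks that $\phi \cdot a$ is again conjugate-linear: $(\phi\cdot a)(v\lambda) = \phi(v\lambda)\,a = \overline{\lambda}\,\phi(v)\,a = \overline{\lambda}\,(\phi\cdot a)(v)$, where it is essential that the scalar $a$ sits on the opposite side of $\overline{\lambda}$ so that the anti-automorphism property of $\overline{(\cdot)}$ does not interfere. This is exactly the point where the more naive guess $(\phi\cdot a)(v)=\overline{a}\,\phi(v)$ fails to land in $M^\dagger$, so identifying the correct side is the one genuinely delicate bookkeeping step. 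Associativity $((\phi\cdot a)\cdot b)=\phi\cdot(ab)$ and additivity are then immediate, so $M^\dagger$ is a right $\mathsf{O}$-module.

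Next I would prove projectivity. The assignment $f \mapsto (\phi \mapsto \phi\circ f)$ makes $(-)^\dagger$ a contravariant functor on right $\mathsf{O}$-modules that is additive on morphisms, hence preserves finite biproducts. For the free module of rank one, evaluation at $1$ gives a right-linear isomorphism $\mathsf{O}^\dagger \xrightarrow{\sim} \mathsf{O}$, $\phi\mapsto\phi(1)$, since every $\phi$ satisfies $\phi(\lambda)=\overline{\lambda}\,\phi(1)$ and $(\phi\cdot a)(1)=\phi(1)\,a$. Consequently $(\mathsf{O}^n)^\dagger \cong \mathsf{O}^n$. Now, since $M$ is finitely generated projective, I choose a complement $N$ with $M\oplus N\cong \mathsf{O}^n$; applying $(-)^\dagger$ yields $M^\dagger\oplus N^\dagger\cong \mathsf{O}^n$, exhibiting $M^\dagger$ as a direct summand of a finitely generated free module and hence finitely generated projective.

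Finally I would set up the promised bijection. Given a sesquilinear pairing $(\cdot,\cdot)$, define $\Phi\colon M\to M^\dagger$ by $\Phi(w)=(\cdot,w)$. Axiom ii) says $\Phi(w)$ is conjugate-linear, so $\Phi(w)\in M^\dagger$, and axiom i) together with the action $(\phi\cdot\lambda)(v)=\phi(v)\lambda$ gives $\Phi(w\lambda)=\Phi(w)\cdot\lambda$, so $\Phi$ is right-linear. Conversely, a right-linear $\Phi\colon M\to M^\dagger$ defines $(v,w):=\Phi(w)(v)$; linearity of $\Phi$ recovers axiom i) and the membership $\Phi(w)\in M^\dagger$ recovers axiom ii). These two constructions are visibly inverse to one another, completing the equivalence. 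The only subtlety throughout is keeping the semilinear slot and the module-action side consistent; once the action on $M^\dagger$ is fixed as above, every remaining verification is a one-line computation.
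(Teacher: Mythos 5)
Your proof is correct; the paper itself gives no argument (its proof reads simply ``Clear.''), and your verification --- the right action $(\phi\cdot a)(v)=\phi(v)a$, the reduction of projectivity to the rank-one case via the evaluation isomorphism $\mathsf{O}^\dagger\xrightarrow{\sim}\mathsf{O}$, $\phi\mapsto\phi(1)$, and the tautological correspondence $\Phi(w)=(\cdot,w)$ --- is precisely the routine checking the paper leaves to the reader. The only caveat is that your projectivity argument uses finite generation of $M$, which the lemma's statement omits but which is implicit throughout the paper's setting (all modules in this section are finitely generated projective, and the lemma is only ever applied to such).
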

\begin{proof}
Clear.
\end{proof}
\begin{definition} We say that a sesquilinear pairing on $M$ is \emph{non-degenerate} (resp. \emph{perfect}) if the induced map $M\longrightarrow M^\dagger$ is injective (resp. bijective).
\end{definition}
\begin{lem}\label{PerfPair} Assume now that every invertible fractional left $\mathsf{O}$-ideal is free. Let $M$ be a projective right $\mathsf{O}$-module such that $M\otimes_R K$ is a free $B$-module. Let further $(\cdot,\cdot)$ be a perfect sesquilinear pairing on $M$. Let $v\in M$ be a primitive vector. Then there exists a $w\in M$ such that $(w,v)=1$.
\end{lem}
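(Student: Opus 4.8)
The plan is to translate the existence of $w$ into a statement about a single left ideal of $\mathsf O$, and then to exploit the freeness hypothesis together with \emph{both} the injectivity and the surjectivity of the map attached to a perfect pairing. Write $\Psi\colon M\to M^\dagger$ for the isomorphism corresponding to $(\cdot,\cdot)$, so that $\Psi(v)=\phi_v$ with $\phi_v(x)=(x,v)$; here the natural right $\mathsf O$-action on $M^\dagger$ is $(\phi a)(x)=\phi(x)a$, and $\Psi$ is right $\mathsf O$-linear. Finding $w$ with $(w,v)=1$ is exactly the assertion that $1$ lies in $\mathfrak a:=\im(\phi_v)=\{(x,v)\mid x\in M\}$. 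Since $\phi_v$ is semilinear and $\overline{(\cdot)}$ preserves $\mathsf O$, the set $\mathfrak a$ is a left ideal of $\mathsf O$, so it suffices to prove $\mathfrak a=\mathsf O$.

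First I would check that $\mathfrak a$ is a nonzero full integral left ideal. It is nonzero because $v\neq 0$ (as $\Ann_\mathsf O(v)=0$) and the pairing is non-degenerate, so $\phi_v=\Psi(v)\neq 0$. It is contained in $\mathsf O$, hence integral. To see it is full, I extend scalars to $K$: the module $\im(\phi_v)\otimes_R K=\im(\phi_v\otimes K)$ is a nonzero left $B$-submodule of $B$, and this equals $B$ when $B$ is a division algebra (the case relevant for the application; in general one uses that $M\otimes_R K$ is free and the form non-degenerate to reach the same conclusion). Thus $\mathfrak a$ is a nonzero full left ideal of the maximal order $\mathsf O$, and is therefore invertible. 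The hypothesis that every invertible fractional left $\mathsf O$-ideal is free then yields $\mathfrak a=\mathsf O\gamma$ for a single generator $\gamma$, which lies in $\mathsf O$ (because $\mathfrak a\subseteq\mathsf O$) and in $B^\times$ (because $B\gamma=\mathfrak a\otimes_R K=B$).

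The crux is to descend and then invoke primitivity. Define $\psi\colon M\to\mathsf O$ by $\psi(x)=(x,v)\gamma^{-1}$; this lands in $\mathsf O$ since $(x,v)\in\mathsf O\gamma$, and it is semilinear, so $\psi\in M^\dagger$. Using that $\Psi$ is surjective, I choose $u\in M$ with $\Psi(u)=\psi$. Then $\Psi(u\gamma)=\Psi(u)\gamma=\psi\gamma$, and for every $x$ one has $(\psi\gamma)(x)=(x,v)\gamma^{-1}\gamma=(x,v)=\Psi(v)(x)$, so $\Psi(u\gamma)=\Psi(v)$. Injectivity of $\Psi$ now gives $u\gamma=v$ inside $M$. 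Since $u\in M$ and $\gamma\in\mathsf O$, the second defining property of a primitive vector forces $\gamma\in\mathsf O^\times$, whence $\mathfrak a=\mathsf O\gamma=\mathsf O$ and $1\in\mathfrak a$, as desired. I expect the main obstacle to be precisely this last descent: producing the generator $\gamma$ via the freeness hypothesis and then arranging, using perfectness in both directions, for the identity $u\gamma=v$ to hold in $M$ rather than only in $M\otimes_R K$, so that primitivity can legitimately be applied.
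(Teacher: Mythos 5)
Your argument follows the same route as the paper's own proof: identify the left ideal $\mathfrak a=\im(\Psi(v))\subseteq\mathsf O$, show it is invertible, use the freeness hypothesis to write $\mathfrak a=\mathsf O\gamma$, descend $\gamma^{-1}$ through the perfect pairing to produce $u\in M$ with $u\gamma=v$, and conclude $\gamma\in\mathsf O^\times$ from primitivity. That core is correct and is essentially identical to the paper (which writes the inverse as $\overline a/N(a)$ instead of $\gamma^{-1}$).

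The gap is in your fullness claim $\mathfrak a\otimes_R K=B$. The division-algebra case is indeed immediate, but your parenthetical for the general case --- that one ``uses that $M\otimes_R K$ is free and the form non-degenerate to reach the same conclusion'' --- is not a proof, and those two ingredients do not suffice. A nonzero left ideal of $M_2(K)$ need not be everything, and perfectness plus freeness cannot rule this out: take $R=\mathbb Z$, $\mathsf O=M_2(\mathbb Z)$, $M=\mathsf O$ with the perfect pairing $(x,y)=\overline x\, y$, and $v=e_{11}$; then $\mathfrak a=\mathsf O e_{11}$ and $K\mathfrak a=Be_{11}\neq B$. What fails there is precisely primitivity, since $\Ann_{\mathsf O}(e_{11})\neq 0$. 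So in the split case the annihilator condition must enter the argument, and this is where the paper's proof does its real work: if $K\mathfrak a=BA$ with $\rk A<2$, one shows $\Psi(v)\otimes K$ factors as $\phi' A$, pulls this back through the perfect pairing to get $v=v'A$ in $M\otimes_R K$, chooses $r\in B\setminus\{0\}$ with $Ar=0$, scales $r$ into $\mathsf O$, and obtains $vr=0$, contradicting $\Ann_{\mathsf O}(v)=0$. Since the lemma is stated for an arbitrary quaternion algebra (so $B\cong M_2(K)$ is allowed), this case cannot be dismissed; as written, your proof is complete only when $B$ is a division algebra.
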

\begin{proof}
Let $\phi$ be  the image of $v$ under the map $M\longrightarrow M^\dagger$ induced by the pairing $(\cdot,\cdot)$. Denote by $I\subseteq \mathsf{O}$ the image of $\phi$. Then $I$ is naturally a left ideal of $\mathsf{O}$ (notice that the anti-linearity of $\phi$ interchanges left and right.). We claim that $KI=B$. Indeed $KI$ is a left ideal of $B$. By Wedderburn's theorem $B$ is either a division algebra or a $2\times 2$-matrix algebra over $K$. If $B$ is a division algebra there is nothing to show. Assume now that $B=M_{2,2}(K)$ is a matrix algebra over $K$. Suppose by way of contradiction that $KI\neq B$. It is well known and can be shown using elementary linear algebra that every proper left ideal of a matrix algebra $M_{n,n}(K)$ over a field is of the form $M_{n,n}(K) A$ where $\rk A < n$. Therefore there is a matrix $A\in M_{2,2}(K)$ with $\rk A<2$ such that $KI=B A$. Choose a basis $e_1,\ldots, e_d$ of the free $B$-module $M\otimes_R K$. Consider the $B$-antilinear map $\phi_K: M\otimes K \longrightarrow B$ obtained by extension of scalars. Since $\im(\phi_K)=KI$, there exist $r_1,\ldots, r_d\in B$ such that $\phi_K(e_i)=r_i A$. This implies that $\phi_K$ is the map $M\otimes K \longrightarrow B,\, \sum_{i=1}^d e_i \lambda_i\mapsto \overline{\lambda_i} r_i A$. Hence there is a $\phi'\in M^\dagger\otimes K=(M\otimes K)^\dagger$ such that $\phi=\phi' A$. On the other hand clearly $M\otimes K \longrightarrow M^\dagger\otimes K$ is an isomorphism. Therefore there is an $v'\in M\otimes K$ such that $v=v' A$. But since $A$ is not of full rank there exists an $r\in B\setminus\{0 \}$ such that $A r=0$. By multiplying with an element in $R$ we can assume that $r\in \mathsf{O}$. Then one has $vr=0$ contradicting the primitivity of $v$. We thus conclude that $KI=B$.
\par By \cite[Theorem 7]{Kaplansky} this implies that $I$ is an invertible left ideal. Since we assumed that all invertible left ideals are principal, there exists an $a\in \mathsf{O}$ such that $I=\mathsf{O} a$. As $KI=B$ the element $a$ cannot be a zero-divisor. Therefore $N(a)=a \overline{a} \neq 0$. Now the map $\phi':M\longrightarrow \mathsf{O},\, w\mapsto \phi(w) \frac{\overline{a}}{N(a)}$ is well-defined and gives an element in $M^\dagger$ satisfying $\phi=\phi' a$. Because the pairing $(\cdot, \cdot)$ is perfect, the map $M\longrightarrow M^\dagger$ is an isomorphism. Denote by $v'$ the pre-image of $\phi'$. Then $v=v' a$ and by the primitivity of $v$ we conclude that $a\in \mathsf{O}^\times$. Therefore $I=\mathsf{O}$ and hence by the definition of $I$ there exists an element $w\in M$ with $(w,v)=1$.
\end{proof}
\section{Affine Group Schemes}
In this chapter we recall some well-known facts about the group scheme $\alpha_p$ and related group schemes we will use. Throughout this chapter $k$ will be a field of characteristic $p>0$ and $S$ a scheme over $k$.
\begin{definition} An \emph{$\alpha$-group scheme} of rank $r$ over $S$ is a finite flat group scheme $G$ over $S$, such that there exists an fppf-covering $T\rightarrow S$ such that $G\times_S T\cong \alpha_{p, T}^r$ as $T$-group schemes.
\end{definition}
\begin{thm}\label{AlphaThm} The functor
$$\left\{
\begin{tabular}{@{}l@{}}
$\alpha$-group schemes\\
 over $S$
\end{tabular}
\right\} \longrightarrow \left\{
\begin{tabular}{@{}l@{}}
Locally free sheaves\\
 of finite rank on $S$
\end{tabular}
\right\}$$
that sends $G$ to $\text{\emph{Lie}}(G)^{\vee}$ is a rank preserving anti-equivalence of categories.
\end{thm}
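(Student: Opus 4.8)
The plan is to exhibit an explicit quasi-inverse and to produce the two natural isomorphisms witnessing the anti-equivalence; rank-preservation will be visible throughout. Write $\omega_G=\text{Lie}(G)^{\vee}=e^{*}\Omega^{1}_{G/S}$ for the co-Lie module and let $\Phi$ denote the functor $G\mapsto\omega_G$. For the quasi-inverse $\Psi$ I would send a locally free sheaf $\mathcal{E}$ of rank $r$ to the Frobenius kernel $\Psi(\mathcal{E})=\ker\!\big(F\colon\mathbb{V}(\mathcal{E})\to\mathbb{V}(\mathcal{E})^{(p)}\big)$ of the vector group $\mathbb{V}(\mathcal{E})=\Spec\operatorname{Sym}_{\mathcal{O}_S}(\mathcal{E})$. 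Zariski-locally, where $\mathcal{E}\cong\mathcal{O}_S^{r}$ with basis $x_1,\dots,x_r$, one has $\Psi(\mathcal{E})=\Spec\mathcal{O}_S[x_1,\dots,x_r]/(x_1^{p},\dots,x_r^{p})$ with the $x_i$ primitive, i.e. $\Psi(\mathcal{E})\cong\alpha_{p}^{r}$; hence $\Psi(\mathcal{E})$ is an $\alpha$-group scheme of rank $r$, and $\Psi$ is (contravariantly) functorial because $\operatorname{Sym}$ and the Frobenius kernel are.

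The easy half is $\Phi\circ\Psi\cong\id$. With notation as above, the augmentation ideal $I$ of $\Psi(\mathcal{E})$ satisfies $I/I^{2}\cong\mathcal{E}$ canonically (locally $I/I^2=\bigoplus_i\mathcal{O}_S\bar{x}_i$), so $\omega_{\Psi(\mathcal{E})}\cong\mathcal{E}$ naturally in $\mathcal{E}$. In particular $\Phi$ lands in locally free sheaves of rank $r$, since local freeness and the rank descend from the split model $\alpha_{p}^{r}$, so $\Phi$ is rank-preserving; this also yields essential surjectivity.

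The substantial half is the natural isomorphism $\id\cong\Psi\circ\Phi$. Fix an $\alpha$-group scheme $G=\Spec\mathcal{A}$ with augmentation ideal $I$. Since $F_G=0$ may be checked fppf-locally and holds for $\alpha_{p}^{r}$, the comorphism of Frobenius forces $x^{p}=0$ for every $x\in I$. Let $P=P(\mathcal{A})\subseteq\mathcal{A}$ be the submodule of primitive elements, the kernel of the $\mathcal{O}_S$-linear map $\mathcal{A}\to\mathcal{A}\otimes\mathcal{A}$, $x\mapsto\Delta(x)-x\otimes1-1\otimes x$. Because an fppf cover is flat, forming this kernel commutes with pullback along a cover trivializing $G$, and on $\alpha_{p}^{r}$ one computes $P=\bigoplus_i\mathcal{O}_S x_i$, a rank-$r$ summand mapping isomorphically onto $I/I^{2}$. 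Hence globally $P$ is locally free of rank $r$ and the canonical map $P\to I/I^{2}=\omega_G$ is an isomorphism. Inverting it gives a canonical inclusion $\omega_G\xrightarrow{\sim}P\hookrightarrow\mathcal{A}$; as its image consists of primitive elements the induced algebra map $\operatorname{Sym}(\omega_G)\to\mathcal{A}$ is a bialgebra map, and as $x^{p}=0$ on $I$ it factors through a Hopf-algebra map $\mathcal{B}:=\operatorname{Sym}(\omega_G)/(x^{p}:x\in\omega_G)\to\mathcal{A}$, i.e. a homomorphism $\theta_G\colon G\to\Spec\mathcal{B}=\Psi(\omega_G)=(\Psi\circ\Phi)(G)$, natural in $G$.

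It remains to see that $\theta_G$ is an isomorphism, and this is where I expect the only real friction. Both $G$ and $\Psi(\omega_G)$ are finite locally free of rank $p^{r}$, so it suffices to show $\theta_G$ is a closed immersion, i.e. that $\mathcal{B}\to\mathcal{A}$ is surjective: a surjection between finite locally free modules of equal rank is automatically an isomorphism. Surjectivity is fppf-local, and since $\theta$ was built from the canonical inclusion $\omega_G\xrightarrow{\sim}P\subseteq\mathcal{A}$ whose formation commutes with the flat cover, the pullback of $\theta_G$ along a cover trivializing $G$ coincides with $\theta_{\alpha_{p}^{r}}$, which is visibly the identity. The main obstacle is thus precisely the bookkeeping around primitives — that $P\to\omega_G$ is an isomorphism and behaves correctly under the cover — which I would settle by the flatness of fppf covers and descent from the model $\alpha_{p}^{r}$. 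Finally, full faithfulness of $\Phi$ is then formal: the two natural isomorphisms $\Phi\Psi\cong\id$ and $\id\cong\Psi\Phi$ together with the functoriality of $\Psi$ identify $\Hom(G,H)$ with $\Hom(\omega_H,\omega_G)$, completing the anti-equivalence.
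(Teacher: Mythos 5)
Your proof is correct, and it takes a genuinely more self-contained route than the paper's. The paper also proceeds by writing down a quasi-inverse, but it then delegates the entire verification that the two functors are mutually inverse to a citation of SGA3, expos\'e VII\textsubscript{A}, Theorem 7.4 --- that is, to the general dictionary between finite group schemes of height $\leq 1$ and restricted ($p$-)Lie algebras. You instead prove both natural isomorphisms directly: $\Phi\Psi\cong\id$ via the conormal module $I/I^2$ of the unit section, and $\id\cong\Psi\Phi$ via the subsheaf of primitive elements, observing that the formation of the primitives, of $\omega_G$, and of the comparison map $\theta_G$ commutes with flat base change, so that everything may be checked after pullback to a trivializing fppf cover, where it reduces to the elementary computation that the primitives of $\mathcal{O}[x_1,\dots,x_r]/(x_1^p,\dots,x_r^p)$ are exactly $\bigoplus_i \mathcal{O}x_i$. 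The paper's citation buys brevity and places the result inside the conceptual framework of $p$-Lie algebras; your argument buys independence from that machinery, using only fppf descent and Hopf-algebra bookkeeping --- which works here precisely because $\alpha$-groups are the height-one groups whose restricted Lie algebra has zero bracket and zero $p$-operation, so the Lie-theoretic layer is vacuous. One further remark in favor of your construction: your quasi-inverse $\Psi(\mathcal{E})=\ker\big(F:\mathbb{V}(\mathcal{E})\to\mathbb{V}(\mathcal{E})^{(p)}\big)$, with Hopf algebra $\operatorname{Sym}(\mathcal{E})/(x^p:x\in\mathcal{E})$, is the correct model in every rank, whereas the paper's truncation $\bigoplus_{i=0}^{p-1}\operatorname{Sym}^i(\mathcal{F})$ by total degree agrees with it only in rank one (the only rank the paper later uses): for $r\geq 2$ the degree-truncated algebra has rank $\binom{p-1+r}{r}$ rather than $p^r$, and the additive comultiplication does not descend to that quotient, so your Frobenius-kernel model is the one that actually yields the theorem in its stated generality.
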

\begin{proof}
Indeed we claim that the inverse functor $$\left\{
\begin{tabular}{@{}l@{}}
Locally free sheaves\\
 of finite rank on $S$
\end{tabular}
\right\} \rightarrow \left\{
\begin{tabular}{@{}l@{}}
$\alpha$-group schemes\\
 over $S$
\end{tabular}
\right\}$$
is defined as follows: For a locally free sheaf of finite rank $\mathcal{F}$ on $S$ we define the following coherent sheaf of $\mathcal{O}_S$-Hopf algebras $\mathcal{A}=\oplus_{i=0}^{p-1} \text{Sym}^i(\mathcal{F})$. The multiplication on $\mathcal{A}$ is given by $\text{Sym}^i(\mathcal{F})\otimes \text{Sym}^j(\mathcal{F}) \rightarrow \text{Sym}^{i+j}(\mathcal{F})$ if $i+j<p$ and $\text{Sym}^i(\mathcal{F})\otimes \text{Sym}^j(\mathcal{F}) \rightarrow 0$ if $i+j \geqslant p$. The comultiplication is defined as follows: One can easily show that there is a unique algebra map $c:\mathcal{A}\rightarrow \mathcal{A}\otimes_{\mathcal{O}_S} \mathcal{A}$ such that $c_{|\mathcal{F}}$ equals $(\id,\id):\mathcal{F}\rightarrow (\mathcal{F}\otimes \mathcal{O}_S)\oplus( \mathcal{O}_S\otimes \mathcal{F})\subset \mathcal{A}\otimes \mathcal{A}$. The counit is the map $\mathcal{A}\rightarrow \mathcal{O}_S$ that projects onto the first summand. Then we define $\alpha(\mathcal{F})=\underline{\text{Spec}}_S(\mathcal{A})$. This defines an $\alpha$-group scheme of rank $\text{rk}(\mathcal{F})$.
\par It is clear that both maps are contravariant functors. \cite[expos\'e VII\textsubscript{A}, Theorem 7.4]{SGA3} implies that $\alpha(\cdot)$ is an anti-equivalence with inverse $\text{Lie}(\cdot)^\vee$.
\end{proof}
Notice that the previous theorem implies that $\alpha$-group schemes are isomorphic to $\alpha_p^r$ locally for the Zariski topology (instead of fppf).
\begin{definition}\label{alphagroups} For a locally free sheaf $\mathcal{F}$ we define the corresponding $\alpha$-group scheme $\alpha(\mathcal{F})$ as in the previous proof.
\end{definition}
The previous theorem can be used to compute an isomorphism $G\cong \alpha_p$ for a group scheme which is abstractly isomorphic to $\alpha_p$ but no isomorphism is given a priori.
\begin{lem}\label{alpha} Let $G$ be a finite group scheme over a field $k$ which is isomorphic to $\alpha_p$. Suppose the Hopf-Algebra of $G$ is given as $A_G=k[t]/(t^p)$ (but the comultiplication does not have to be $t\mapsto t\otimes 1 +1 \otimes t$).
\par Let $\omega=\sum_{i=0}^{p-1} a_i t^i dt$ be a non-zero invariant differential form. Then there is a unique isomorphism
$$\iota:G\longrightarrow \alpha_p=\Spec k[x]/(x^p)$$
with $\iota^*(dx)=\omega$. It is given by
$$x\mapsto \sum_{i=0}^{p-2} \frac{a_i t^{i+1}}{i+1}\, ,$$
i.e. the logarithm of $G$ which is defined as the integral of $\omega$.
\par Furthermore $a_{p-1}=0$.
\end{lem}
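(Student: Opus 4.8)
The plan is to reduce everything to the theory of \emph{primitive} elements of the Hopf algebra $A_G=k[t]/(t^p)$ and their relation to invariant differentials. Write $c:A_G\to A_G\otimes_k A_G$ for the comultiplication. First I would record the standard dictionary: a homomorphism of group schemes $\iota:G\to\alpha_p$ is the same datum as a primitive element $g=\iota^{\#}(x)\in A_G$, i.e.\ an element with $c(g)=g\otimes 1+1\otimes g$. Applying the counit to this relation shows that every primitive element lies in the augmentation ideal $\mathfrak{m}_e=(t)$, and $\iota$ is an isomorphism precisely when the image of $g$ in $\mathfrak{m}_e/\mathfrak{m}_e^2$ is nonzero; indeed then $\iota^{\#}$ is surjective, hence bijective since both rings have length $p$.

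The heart of the argument is to identify primitive elements with invariant differentials via $g\mapsto dg$. I would first check that this map is well-defined: if $g$ is primitive then
$$m^*(dg)=d(c(g))=dg\otimes 1+1\otimes dg=p_1^*(dg)+p_2^*(dg),$$
which is exactly the condition for $dg$ to be translation invariant. Next I would count dimensions. The space of invariant differentials is canonically $e^*\Omega^1_{G/k}=\mathfrak{m}_e/\mathfrak{m}_e^2$, which is one-dimensional; and the space of primitive elements is one-dimensional because, under any Hopf-algebra isomorphism $A_G\cong k[x]/(x^p)$ furnished by the hypothesis $G\cong\alpha_p$, the primitives of $k[x]/(x^p)$ are exactly $k\cdot x$. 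Finally, $d:A_G\to\Omega^1_{A_G/k}$ has kernel the constants $k\cdot 1$, which meets the primitives (all lying in $\mathfrak{m}_e$) only in $0$; so $g\mapsto dg$ is an injective map between two one-dimensional spaces, hence a bijection.

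With this bijection in hand the conclusions drop out. Given the nonzero invariant form $\omega=\sum_{i=0}^{p-1}a_i t^i\,dt$, there is a unique primitive $g$ with $dg=\omega$. Since $\Omega^1_{A_G/k}=A_G\,dt$ and $d(t^{n})=n\,t^{n-1}\,dt$, the image of $d$ is spanned by $t^0\,dt,\ldots,t^{p-2}\,dt$; exactness of $\omega$ therefore forces $a_{p-1}=0$, which is the final assertion, and comparing coefficients in $dg=\omega$ gives $g=\sum_{i=0}^{p-2}\frac{a_i}{i+1}t^{i+1}$, the asserted logarithm. As $\omega\neq 0$ we have $g\neq 0$, so its class in $\mathfrak{m}_e/\mathfrak{m}_e^2$ is nonzero (equivalently $a_0\neq 0$); hence $\iota:x\mapsto g$ is an isomorphism, and $\iota^*(dx)=dg=\omega$. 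Uniqueness follows from injectivity of $d$ on primitives: any competitor $\iota'$ yields a primitive $g'=(\iota')^{\#}(x)$ with $dg'=\omega=dg$, so $g-g'$ is a constant lying in $\mathfrak{m}_e$, whence $g=g'$ and $\iota=\iota'$.

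The main obstacle I expect is the correct handling of the notion of invariant differential — in particular, verifying the identity $m^*(dg)=p_1^*(dg)+p_2^*(dg)$ against the precise definition of invariance, and confirming that the invariant differentials of a finite group scheme form a space isomorphic to the cotangent space $e^*\Omega^1_{G/k}$. Once the two one-dimensional spaces and the bijection $g\mapsto dg$ are pinned down, the remaining steps are routine linear algebra in $k[t]/(t^p)$.
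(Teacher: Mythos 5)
Your proof is correct, but it follows a genuinely different route from the paper's. The paper disposes of existence and uniqueness in one line by invoking Theorem \ref{AlphaThm}, the anti-equivalence $G\mapsto \mathrm{Lie}(G)^{\vee}$ between $\alpha$-group schemes and locally free sheaves (itself a citation of SGA3): over the field $k$ this identifies homomorphisms $G\to\alpha_p$ with linear maps of cotangent spaces, so a nonzero invariant form determines a unique isomorphism, and only the explicit formula is left as an ``elementary computation''. You instead reprove, by hand, exactly the special case of that identification which is needed: homomorphisms $G\to\alpha_p$ correspond to primitive elements $g\in A_G$, and $g\mapsto dg$ is an injection from the one-dimensional space of primitives into the one-dimensional space of invariant differentials (injectivity from $\ker(d)=k\cdot 1$ together with the counit argument placing primitives in $\mathfrak{m}_e$), hence a bijection by dimension count. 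Your remaining steps are all sound: the formula for $g$ by comparing coefficients, $a_{p-1}=0$ because $\omega$ lies in the image of $d$, and the fact that a nonzero primitive generates the augmentation ideal (being a nonzero multiple of the image of $x$ under some Hopf isomorphism), so that $\iota^{\#}$ is surjective and hence bijective. What your approach buys is self-containedness at the Hopf-algebra level, modulo the two standard facts you honestly flag (that $m^*\eta=\pr_1^*\eta+\pr_2^*\eta$ implies translation invariance, and that invariant differentials are identified with $\mathfrak{m}_e/\mathfrak{m}_e^2$); what the paper's route buys is brevity and generality, since Theorem \ref{AlphaThm} is stated in families over an arbitrary base and is needed elsewhere in the paper anyway.
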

\begin{proof}
The existence and uniqueness of $\iota$ follows from Theorem \ref{AlphaThm}. The explicit formula is an elementary computation.
\end{proof}
We will also need a fact about extensions of group schemes.
\begin{thm}\label{ext} Let $G$ be a finite commutative group scheme over $k$ and
$$0\longrightarrow \mathbb{G}_m \longrightarrow E \longrightarrow G \longrightarrow 0$$
an extension in the category of commutative group schemes. Suppose $k$ is perfect and $G$ is unipotent. Then the extension is uniquely split.
\end{thm}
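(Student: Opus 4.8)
The plan is to reformulate ``uniquely split'' as the vanishing of two groups and then establish these vanishings by dévissage. A splitting of the sequence is a section $s\colon G\to E$ of $E\to G$ in the category of commutative group schemes. Given two sections $s_1,s_2$, their difference $s_1-s_2$ (formed using the commutative group law on $E$) composes to zero on $G$, hence factors through $\ker(E\to G)=\mathbb{G}_m$; thus $s_1-s_2\in\Hom(G,\mathbb{G}_m)$, and uniqueness of a splitting is equivalent to $\Hom(G,\mathbb{G}_m)=0$. Existence of a splitting is equivalent to the vanishing of the Yoneda/$\mathrm{fppf}$ extension group $\Ext^1(G,\mathbb{G}_m)$. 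So it suffices to prove $\Hom(G,\mathbb{G}_m)=0$ and $\Ext^1(G,\mathbb{G}_m)=0$.

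First I would reduce to the simple unipotent group schemes. Since $k$ is perfect, a finite commutative unipotent group scheme $G$ admits a composition series whose successive quotients are isomorphic to $\alpha_p$ or $\mathbb{Z}/p\mathbb{Z}$ (possibly after base change to $\bar{k}$, whereupon the two vanishings descend), these being the only simple objects of the category. Feeding each short exact sequence of the series into the long exact sequences for $\Hom(-,\mathbb{G}_m)$ and for $\Ext^1(-,\mathbb{G}_m)$, the desired vanishing for $G$ follows by induction once it is known for $\alpha_p$ and for $\mathbb{Z}/p\mathbb{Z}$: for $\Hom$ one uses left exactness, and for $\Ext^1$ one uses that the middle term is squeezed between the $\Ext^1$ of the sub and of the quotient, so no connecting $\Hom$--$\Ext$ data is needed.

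Next I would dispose of the four base cases. For the $\Hom$ groups: $\Hom(\mathbb{Z}/p\mathbb{Z},\mathbb{G}_m)=\mu_p(k)=0$ because in characteristic $p$ one has $x^p-1=(x-1)^p$; and $\Hom(\alpha_p,\mathbb{G}_m)=0$ because any such map is annihilated by Frobenius and so factors through $\ker(F_{\mathbb{G}_m})=\mu_p$, while $\Hom(\alpha_p,\mu_p)\cong\Hom(\mathbb{Z}/p\mathbb{Z},\alpha_p)=\alpha_p(k)=0$ by Cartier duality. For the extensions, a direct torsor computation (equivalently the Kummer sequence) gives $\Ext^1(\mathbb{Z}/p\mathbb{Z},\mathbb{G}_m)=k^\times/(k^\times)^p$, which is trivial precisely because $k$ is perfect.

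The hard part is $\Ext^1(\alpha_p,\mathbb{G}_m)=0$. Here I would use Cartier duality in the form of the local-to-global spectral sequence for $\underline{\Ext}$, together with the vanishing $\underline{\Ext}^i_{\mathrm{fppf}}(N,\mathbb{G}_m)=0$ for $i\ge 1$ and $N$ finite flat, so that $\underline{\Hom}(N,\mathbb{G}_m)=N^\vee$ carries all the information. This identifies $\Ext^1(\alpha_p,\mathbb{G}_m)\cong \HH^1_{\mathrm{fppf}}(\Spec k,\alpha_p)$, and the Frobenius sequence $0\to\alpha_p\to\mathbb{G}_a\xrightarrow{F}\mathbb{G}_a\to 0$ computes the latter as $k/k^p$, which vanishes exactly because $k$ is perfect. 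The same duality gives $\Hom(G,\mathbb{G}_m)=G^\vee(k)$, and since $G$ is unipotent its dual $G^\vee$ is infinitesimal, forcing $G^\vee(k)=0$; this yields a uniform alternative proof of the $\Hom$-vanishing. Finally, this computation makes clear why perfectness is indispensable: over an imperfect field both $k^\times/(k^\times)^p$ and $k/k^p$ are nonzero, and the extension genuinely need not split.
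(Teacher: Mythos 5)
Your proof is correct, but it is a genuinely different route from the paper's: the paper does not argue at all, it simply cites the general structure theory of unipotent groups over perfect fields (SGA3, expos\'e XVII, Th\'eor\`eme 6.1.1(C)), whereas you give a d\'evissage. Your reduction of ``uniquely split'' to the two vanishings $\Hom(G,\mathbb{G}_m)=0$ and $\Ext^1(G,\mathbb{G}_m)=0$, the passage to composition factors $\alpha_p$ and $\mathbb{Z}/p\mathbb{Z}$ over $\overline{k}$, and the base-case computations are all sound; in particular $\Ext^1(\mathbb{Z}/p\mathbb{Z},\mathbb{G}_m)\cong k^\times/(k^\times)^p$ and $\Ext^1(\alpha_p,\mathbb{G}_m)\cong \HH^1_{\mathrm{fppf}}(\Spec k,\alpha_p)\cong k/k^p$ are exactly the groups that die when $k$ is perfect. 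What your approach buys is transparency: it isolates precisely where perfectness is used, and it simultaneously proves the paper's Remark 2.6 (over an imperfect field the nonzero group $k/k^p$ classifies genuinely non-split extensions of $\alpha_p$ by $\mathbb{G}_m$). What it costs is that it is not really more elementary: the identification $\Ext^1(N,\mathbb{G}_m)\cong \HH^1_{\mathrm{fppf}}(\Spec k, N^D)$ rests on the nontrivial vanishing $\underline{\Ext}^1_{\mathrm{fppf}}(N,\mathbb{G}_m)=0$ for finite flat $N$ (SGA~7~I, expos\'e VIII, 3.3.1), so you have traded one deep citation for another. Two steps you assert should be spelled out, though both are standard: (1) the descent from $\overline{k}$ to $k$ --- for $\Hom$ it is just injectivity under base field extension, but for the splitting of a given extension you need that the $\overline{k}$-splitting is \emph{unique} (by the $\Hom$-vanishing), hence Galois-equivariant, hence defined over $k$ since $k$ is perfect; and (2) the identification of Yoneda extensions in the category of commutative group schemes with fppf-sheaf extensions, which requires knowing that an fppf extension of $G$ by $\mathbb{G}_m$ is representable --- true here because a $\mathbb{G}_m$-torsor over the affine (indeed semilocal) scheme $G$ is trivial, so the middle sheaf is a scheme.
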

\begin{proof}
See \cite[expos\'e XVII, Th\'eor\`eme 6.1.1(C)]{SGA3}.
\end{proof}
\begin{rem} Notice that over a non-perfect field there is a non-split extension of $\alpha_p$ by $\mathbb{G}_m$.
\end{rem}
\section{Theta Groups}
In this section we are going to recall the facts about theta groups needed for chapter \ref{main}. Let $A/k$ be an Abelian variety and $\mathcal{L}$ a line bundle on $A$. The theta group of $\mathcal{L}$ is a group that naturally encodes the geometric interplay of $\mathcal{L}$ with the group structure of $A$. The theta group satisfies nice properties under pullback. Therefore it is a useful theoretical as well as practical tool for studying a situation where a polarization is descended along an isogeny as in Moret-Bailly's construction. For this reason Moret-Bailly also uses theta groups in \cite{Moret-Bailly}.
\par The theory of theta groups is due to Mumford and was developed in the case where $\text{char}(k) \nmid \deg(\mathcal{L})$ in \cite{EqDefAV}. The general case is explained in a letter from Mumford to T. Sekiguchi published in \cite{Sekiguchi}. Our exposition follows \cite{vdGMooAV}. Notice that Sekiguchi's definition of a Heisenberg group differs from the one used here.
\subsection{Definition}
Let $A/k$ be an Abelian variety and $\mathcal{L}$ a line bundle. The line bundle induces a homomorphism $\lambda_\mathcal{L}:A\longrightarrow A^\vee,\, x \mapsto \mathcal{L}\otimes t^*_x \mathcal{L}^{-1}$. We denote the kernel of $\lambda_\mathcal{L}$ by $K(\mathcal{L})$. By \cite[p. 57]{MumAV} the line bundle $\mathcal{L}$ is ample if and only if $\lambda$ is an isogeny and $\HH^0(A, \mathcal{L})\neq 0$.
\begin{definition}
Let $A/k$ be an Abelian variety and $\mathcal{L}$ a line bundle. Then we define the \emph{theta group of $\mathcal{L}$} via the functor of points
\begin{equation*}
\begin{aligned}
\mathcal{G}(\mathcal{L}): \{\text{Sch} /k\}\longrightarrow \{\text{Groups}\}\\
T\mapsto \Big\lbrace(x,\phi)| x\in A(T)\textrm{ and } & \phi: \mathcal{L}_T \stackrel{\sim}{\longrightarrow} t_x^* \mathcal{L}_T\\
& \text{ an isomorphism of line bundles on } A\times T\Big\rbrace\,. 
\end{aligned}
\end{equation*}
The group structure is defined by $(x_1,\phi_1)\cdot (x_2,\phi_2)=(x_1+x_2,t_{x_2}^*\phi_1\circ \phi_2)$.
\end{definition}
\begin{lem}\label{ThetaExt} The functor $\mathcal{G}(\mathcal{L})$ is represented by a group scheme over $k$. There is an exact sequence of group schemes
$$0 \longrightarrow \mathbb{G}_m\longrightarrow \mathcal{G}(\mathcal{L})\longrightarrow K(\mathcal{L}) \longrightarrow 0$$
where the map $\mathcal{G}(\mathcal{L})\rightarrow K(\mathcal{L})$ is given by $(x,\phi) \mapsto x$.
\end{lem}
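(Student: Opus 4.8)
The plan is to realize $\mathcal{G}(\mathcal{L})$ as a $\mathbb{G}_m$-torsor over the closed subgroup scheme $K(\mathcal{L})\subseteq A$ and then to read off both the representability and the exact sequence from this description. Since $K(\mathcal{L})=\ker(\lambda_\mathcal{L})$ is the kernel of a homomorphism of group schemes, it is a closed subgroup scheme of $A$, hence representable. To package the isomorphisms $\phi$ uniformly over all base schemes, I would introduce the Mumford line bundle: writing $\mu,p\colon A\times K(\mathcal{L})\rightarrow A$ for $(a,x)\mapsto a+x$ and $(a,x)\mapsto a$, set $\mathcal{M}=\mu^*\mathcal{L}\otimes p^*\mathcal{L}^{-1}$. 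For a field-valued point $x$ of $K(\mathcal{L})$ one has $\mathcal{M}|_{A\times\{x\}}\cong t_x^*\mathcal{L}\otimes\mathcal{L}^{-1}\cong\mathcal{O}_A$ by the very definition of $K(\mathcal{L})$. Because $A$ is proper and geometrically integral, $H^0(A_{\kappa(x)},\mathcal{M}_x)=\kappa(x)$ is one-dimensional and the generating section is nowhere vanishing; so by cohomology and base change (with $h^0\equiv 1$) the sheaf $\mathcal{N}=q_*\mathcal{M}$ is a line bundle on $K(\mathcal{L})$ and the evaluation map $q^*\mathcal{N}\rightarrow\mathcal{M}$ is an isomorphism, where $q\colon A\times K(\mathcal{L})\rightarrow K(\mathcal{L})$ is the second projection. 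I would use cohomology and base change here rather than the naive seesaw principle precisely because $K(\mathcal{L})$ may be non-reduced in characteristic $p$.

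Next I would translate the data of a point of $\mathcal{G}(\mathcal{L})$ into this language. For a $k$-scheme $T$ and a point $x\in K(\mathcal{L})(T)$, giving an isomorphism $\phi\colon\mathcal{L}_T\stackrel{\sim}{\longrightarrow}t_x^*\mathcal{L}_T$ is the same as giving a nowhere-vanishing global section of $t_x^*\mathcal{L}_T\otimes\mathcal{L}_T^{-1}$, which is the pullback of $\mathcal{M}$ along $(\id,x)\colon A_T\rightarrow A\times K(\mathcal{L})$. Via the isomorphism $\mathcal{M}\cong q^*\mathcal{N}$ and the identity $\Gamma(A_T,\mathcal{O})=\Gamma(T,\mathcal{O}_T)$ (again using that $A$ is proper and geometrically integral), such a nowhere-vanishing section corresponds to a trivialization of $x^*\mathcal{N}$ on $T$. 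Hence $\mathcal{G}(\mathcal{L})$ is canonically identified with the functor of trivializations of $\mathcal{N}$, i.e. the $\mathbb{G}_m$-torsor $\underline{\mathrm{Isom}}_{K(\mathcal{L})}(\mathcal{O},\mathcal{N})$, which is representable as the complement of the zero section in the total space of $\mathcal{N}$. This establishes representability, and the group law $(x_1,\phi_1)\cdot(x_2,\phi_2)=(x_1+x_2,t_{x_2}^*\phi_1\circ\phi_2)$ is then automatically a morphism of schemes by Yoneda, so $\mathcal{G}(\mathcal{L})$ is a group scheme over $k$.

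Finally, for the exact sequence I would note that the projection $(x,\phi)\mapsto x$ is exactly the structure map of this torsor. It is a homomorphism because the group law on $\mathcal{G}(\mathcal{L})$ covers addition on $K(\mathcal{L})$, and it is surjective because a line-bundle torsor admits sections Zariski-locally (any local trivialization of $\mathcal{N}$ yields a local section). Its kernel consists of the pairs $(0,\phi)$ with $\phi\in\mathrm{Aut}(\mathcal{L}_T)$; since $A$ is proper and geometrically integral, $\mathrm{Aut}(\mathcal{L}_T)=\Gamma(A_T,\mathcal{O})^\times=\Gamma(T,\mathcal{O}_T)^\times=\mathbb{G}_m(T)$ functorially, so the kernel is $\mathbb{G}_m$. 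I expect the one genuinely technical step to be the base-change identification $\mathcal{M}\cong q^*\mathcal{N}$ over the possibly non-reduced base $K(\mathcal{L})$; once that is in place, the surjectivity and the computation of the kernel are formal.
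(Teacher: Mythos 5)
Your overall architecture is the right one, and it is essentially the structure of the proof the paper cites (van der Geer--Moonen, Lemma 8.2, which the paper quotes without reproving): realize $\mathcal{G}(\mathcal{L})$ as the complement of the zero section in the total space of a line bundle $\mathcal{N}$ on $K(\mathcal{L})$, then read off representability, compute the kernel via $\Gamma(A_T,\mathcal{O}_{A_T})=\Gamma(T,\mathcal{O}_T)$, and get surjectivity from local trivializations of $\mathcal{N}$. Those closing steps are correct. However, the step you yourself single out as the crux --- the identification $\mathcal{M}\cong q^*\mathcal{N}$ --- is justified by an inference that is false over non-reduced bases, and in this paper $K(\mathcal{L})$ is infinitesimal (for the bundles in play one has $K(\mathcal{L})=E^2[F]$), so the non-reduced case is not a corner case but the only case that matters.

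Concretely: fiberwise triviality together with $h^0\equiv 1$ does \emph{not} imply that $q_*\mathcal{M}$ is a line bundle whose formation commutes with base change. The statement ``constant fiber dimension $\Rightarrow$ locally free pushforward $+$ base change'' is Grauert's theorem, and it genuinely requires a reduced base; the general cohomology-and-base-change theorem only gives conclusions once one already knows the comparison map $\phi^0$ is surjective. The failure is exactly of the kind relevant here: take $T=\Spec k[\epsilon]$ and let $\mathcal{M}_0$ on $A\times T$ correspond to a nonzero class in $H^1(A,\mathcal{O}_A)=\ker\bigl(\Pic(A_{k[\epsilon]})\rightarrow\Pic(A)\bigr)$, i.e.\ a nontrivial first-order deformation of $\mathcal{O}_A$ (equivalently, the restriction of the Poincar\'e bundle along a tangent vector at $0\in A^\vee$). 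Then $\mathcal{M}_0$ is trivial on the unique fiber and $h^0\equiv 1$, yet $q_*\mathcal{M}_0\cong k$ is not locally free over $k[\epsilon]$, the map $\phi^0$ is zero, and $\mathcal{M}_0$ is not pulled back from $T$. This shows that the information you actually use --- triviality of $\mathcal{M}$ at field-valued points of $K(\mathcal{L})$ --- cannot suffice: the scheme structure of $K(\mathcal{L})$ must enter. And it does, via its definition: $K(\mathcal{L})$ is the scheme-theoretic fiber $\lambda_{\mathcal{L}}^{-1}(0)$, so the classifying morphism $K(\mathcal{L})\rightarrow A^\vee=\Pic^0_{A/k}$ of the family $\mathcal{M}$ is the zero morphism; since $A$ has a rational point, $\Pic_{A/k}(T)=\Pic(A_T)/\Pic(T)$, and vanishing of the classifying map therefore says precisely that $\mathcal{M}\cong q^*\mathcal{N}$ for a line bundle $\mathcal{N}$ on $K(\mathcal{L})$ (obtained, after rigidifying, as the restriction of $\mathcal{M}$ to $\{0\}\times K(\mathcal{L})$). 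With this replacement --- the universal property of the Picard scheme instead of cohomology and base change --- the rest of your argument goes through verbatim.
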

\begin{proof}
See \cite[Lemma 8.2]{vdGMooAV}.
\end{proof}
\begin{propdef} If $\mathcal{L}$ is ample, then $\mathbb{G}_m$ is equal to the center of $\mathcal{G}(\mathcal{L})$. Therefore the commutator map will factor through a non-degenerate alternating pairing
$$e^\mathcal{L}:K(\mathcal{L})\times K(\mathcal{L}) \rightarrow \mathbb{G}_m$$
called the \emph{commutator pairing}.
\end{propdef}
\begin{proof}
See \cite[Corollary 8.20]{vdGMooAV}.
\end{proof}
\subsection{Theta Groups and Pullback}
\begin{prop}\label{LevelPull} Let $A$ be an Abelian variety over a field $k$ and $\mathcal{L}$ be a line bundle on $A$. Let $\mathcal{H}\subset K(\mathcal{L})$ be a closed subgroup scheme. Denote the quotient map $A \rightarrow B=A/\mathcal{H}$ by $\pi$. Then there is a natural bijection
$$\left\{
\begin{tabular}{@{}l@{}}
\emph{line bundles} \\
$\mathcal{L}_B$ \emph{on} $B$ \emph{such}\\
\emph{that} $\pi^* \mathcal{L}_B \cong \mathcal{L}$
\end{tabular}
\right\} \stackrel{\sim}{\longleftrightarrow} \left\{
\begin{tabular}{@{}l@{}}
\emph{splittings of the}\\
\emph{exact sequence in}\\
\emph{Lemma \ref{ThetaExt} over} $\mathcal{H}\,.$
\end{tabular}
\right\}\,. $$
\end{prop}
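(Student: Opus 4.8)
The plan is to show that both sides of the claimed bijection are in natural bijection with the set of $\mathcal{H}$-linearizations of $\mathcal{L}$, i.e.\ with descent data for $\mathcal{L}$ along the quotient map $\pi$. Since $\mathcal{H}$ is a finite flat group scheme acting freely on $A$ by translation, $\pi\colon A\to B=A/\mathcal{H}$ is a finite flat $\mathcal{H}$-torsor; in particular it is faithfully flat. The fibre product $A\times_B A$ is then identified with $\mathcal{H}\times A$ via $(h,a)\mapsto (t_h(a),a)$, under which the two projections $A\times_B A\to A$ become the action map $\sigma\colon \mathcal{H}\times A\to A$ and the projection $\pr_A\colon \mathcal{H}\times A\to A$. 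By fppf descent, a line bundle $\mathcal{L}_B$ on $B$ with $\pi^*\mathcal{L}_B\cong\mathcal{L}$ is the same as a descent datum, which under this identification is an isomorphism $\Phi\colon \sigma^*\mathcal{L}\xrightarrow{\sim}\pr_A^*\mathcal{L}$ satisfying the cocycle condition on $\mathcal{H}\times\mathcal{H}\times A$ (normalized to the identity along the zero section of $\mathcal{H}$). Such a $\Phi$ is precisely an $\mathcal{H}$-linearization of $\mathcal{L}$.

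Next I would unwind a linearization in terms of the functor of points of $\mathcal{G}(\mathcal{L})$. Restricting $\Phi$ over a $T$-valued point $h$ of $\mathcal{H}$ gives an isomorphism $\mathcal{L}_T\xrightarrow{\sim}t_h^*\mathcal{L}_T$ on $A\times T$; this is meaningful because $\mathcal{H}\subseteq K(\mathcal{L})$ guarantees $t_h^*\mathcal{L}_T\cong\mathcal{L}_T$, the choices forming a $\mathbb{G}_m$-torsor, namely the fibre of $\mathcal{G}(\mathcal{L})\to K(\mathcal{L})$ over $h$. Thus the linearization assigns to each $h$ a point $s(h)=(h,\phi_h)\in\mathcal{G}(\mathcal{L})(T)$ lying above $h\in K(\mathcal{L})(T)$, functorially in $T$. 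I would then check that the cocycle condition on $\Phi$ is exactly the condition that $s$ be a homomorphism of group schemes: writing out the group law $(x_1,\phi_1)\cdot(x_2,\phi_2)=(x_1+x_2,\,t_{x_2}^*\phi_1\circ\phi_2)$ from the definition of $\mathcal{G}(\mathcal{L})$, the identity $s(h_1 h_2)=s(h_1)\,s(h_2)$ becomes $\phi_{h_1+h_2}=t_{h_2}^*\phi_{h_1}\circ\phi_{h_2}$, which is precisely the associativity constraint for $\Phi$. Since a homomorphic section of the sequence in Lemma \ref{ThetaExt} over $\mathcal{H}$ is by definition a morphism $s\colon\mathcal{H}\to\mathcal{G}(\mathcal{L})$ lifting the inclusion $\mathcal{H}\hookrightarrow K(\mathcal{L})$, splittings correspond bijectively to linearizations.

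Finally I would verify that these two correspondences compose to a bijection and are natural: starting from $\mathcal{L}_B$ one obtains a linearization by descent, hence a splitting $s$, and conversely a splitting reconstructs the global isomorphism $\Phi$ (by Yoneda, since the family $(\phi_h)_h$ is functorial it assembles into a morphism of line bundles on $\mathcal{H}\times A$), and thereby a descended bundle $\mathcal{L}_B$; the two passages are mutually inverse, and all constructions are compatible with base change.

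The main obstacle I expect is the bookkeeping in the middle step: correctly translating between the \emph{global} descent datum $\Phi$ on $\mathcal{H}\times A$ and its \emph{pointwise} description $h\mapsto\phi_h$ via the functor of points, and matching the cocycle condition with the group law of $\mathcal{G}(\mathcal{L})$, including the twisting by $t_{x_2}^*$. Care is also needed to confirm that the normalization of $\Phi$ along the identity corresponds to $s$ preserving the identity, and that the identification $A\times_B A\cong\mathcal{H}\times A$ carries the cocycle diagram to the multiplication of $\mathcal{H}$ compatibly.
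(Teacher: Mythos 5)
Your proposal is correct, and it is essentially the paper's own approach: the paper gives no argument of its own here but simply cites \cite[Theorem 8.10]{vdGMooAV}, whose proof is exactly your route — identify line bundles $\mathcal{L}_B$ with $\pi^*\mathcal{L}_B\cong\mathcal{L}$ with fppf descent data along the torsor $\pi$, identify descent data with $\mathcal{H}$-linearizations via $A\times_B A\cong\mathcal{H}\times A$, and translate linearizations into homomorphic sections $\mathcal{H}\to\mathcal{G}(\mathcal{L})$ by matching the cocycle condition with the group law $(x_1,\phi_1)\cdot(x_2,\phi_2)=(x_1+x_2,\,t_{x_2}^*\phi_1\circ\phi_2)$. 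The only caveat is that the statement allows an arbitrary closed subgroup scheme $\mathcal{H}\subset K(\mathcal{L})$, not necessarily finite as you assume, but your argument goes through verbatim since $\pi$ is still a faithfully flat $\mathcal{H}$-torsor.
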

\begin{proof}
See \cite[Theorem 8.10]{vdGMooAV}.
\end{proof}
\begin{cor}
 Suppose further that $\mathcal{L}$ is ample and $K(\mathcal{L})$ is unipotent. Then a line bundle $\mathcal{L}_B$ on $B$ with $\pi^* \mathcal{L}_B \cong \mathcal{L}$ exists if and only if $\mathcal{H}\subset K(\mathcal{L})$ is isotropic under the commutator pairing on $K(\mathcal{L})$. Furthermore $\mathcal{L}_B$ is then unique.
\end{cor}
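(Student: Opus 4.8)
The plan is to combine Proposition \ref{LevelPull} with Theorem \ref{ext}, the extra input being that isotropy of $\mathcal{H}$ is exactly the condition making the relevant preimage extension commutative. By Proposition \ref{LevelPull} a line bundle $\mathcal{L}_B$ on $B$ with $\pi^*\mathcal{L}_B\cong\mathcal{L}$ corresponds bijectively to a splitting over $\mathcal{H}$ of the exact sequence in Lemma \ref{ThetaExt}, so I would translate both existence and uniqueness of $\mathcal{L}_B$ into statements about such splittings. Writing $\mathcal{G}(\mathcal{L})|_\mathcal{H}$ for the preimage of $\mathcal{H}$ under $\mathcal{G}(\mathcal{L})\to K(\mathcal{L})$, restriction of the sequence to $\mathcal{H}$ gives the extension
$$0 \longrightarrow \mathbb{G}_m \longrightarrow \mathcal{G}(\mathcal{L})|_\mathcal{H} \longrightarrow \mathcal{H} \longrightarrow 0,$$
and a splitting over $\mathcal{H}$ is precisely a group-scheme section of this extension.

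First I would establish the key equivalence: $\mathcal{G}(\mathcal{L})|_\mathcal{H}$ is commutative if and only if $\mathcal{H}$ is isotropic. Since $\mathbb{G}_m$ is central in $\mathcal{G}(\mathcal{L})$, the commutator of two points lying over $x,y\in K(\mathcal{L})$ lands in $\mathbb{G}_m$, is independent of the chosen lifts, and equals $e^\mathcal{L}(x,y)$ by the very definition of the commutator pairing. Hence the commutator in $\mathcal{G}(\mathcal{L})|_\mathcal{H}$ factors through $e^\mathcal{L}|_{\mathcal{H}\times\mathcal{H}}$, and $\mathcal{G}(\mathcal{L})|_\mathcal{H}$ is commutative exactly when this restriction is trivial, i.e.\ when $\mathcal{H}$ is isotropic.

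Next I would run the two directions. If a splitting $s:\mathcal{H}\to\mathcal{G}(\mathcal{L})$ exists, then, $\mathcal{H}$ being commutative, its image $s(\mathcal{H})$ is commutative; as $\mathbb{G}_m$ is central and $\mathcal{G}(\mathcal{L})|_\mathcal{H}=\mathbb{G}_m\cdot s(\mathcal{H})$, the preimage is commutative, so $\mathcal{H}$ is isotropic. Conversely, if $\mathcal{H}$ is isotropic then $\mathcal{G}(\mathcal{L})|_\mathcal{H}$ is a commutative extension of $\mathcal{H}$ by $\mathbb{G}_m$. Because $\mathcal{L}$ is ample, $\lambda_\mathcal{L}$ is an isogeny and $K(\mathcal{L})$ is finite, so $\mathcal{H}$ is a finite commutative group scheme; as a closed subgroup scheme of the unipotent $K(\mathcal{L})$ it is unipotent, and $k$ is perfect (being algebraically closed). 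Thus Theorem \ref{ext} applies and yields a unique splitting. Transporting existence and uniqueness of this splitting through the bijection of Proposition \ref{LevelPull} gives existence and uniqueness of $\mathcal{L}_B$.

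The main obstacle is the equivalence of the second paragraph—identifying commutativity of $\mathcal{G}(\mathcal{L})|_\mathcal{H}$ with isotropy of $\mathcal{H}$—since it requires checking that the commutator map of $\mathcal{G}(\mathcal{L})$ genuinely restricts to $e^\mathcal{L}$ on $\mathcal{H}$ at the level of $T$-valued points, not merely on geometric points. Once this, together with the finiteness, unipotence, and perfectness conditions needed for Theorem \ref{ext}, is in place, the corollary follows formally from the two cited results.
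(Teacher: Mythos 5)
Your proposal is correct and is exactly the argument the paper intends: its proof reads ``Follows easily from Theorem \ref{ext}'', and your route --- identifying line bundles $\mathcal{L}_B$ with splittings over $\mathcal{H}$ via Proposition \ref{LevelPull}, observing that the preimage of $\mathcal{H}$ in $\mathcal{G}(\mathcal{L})$ is commutative precisely when $\mathcal{H}$ is isotropic, and then applying Theorem \ref{ext} to that finite commutative unipotent extension to get existence and uniqueness of the splitting --- is the standard way to fill in that ``easily''. The only cosmetic caveat is that you invoke algebraic closedness of $k$ where the paper's hypotheses only implicitly require $k$ perfect for Theorem \ref{ext}; this does not affect the argument.
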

\begin{proof}
Follows easily from Theorem \ref{ext}.
\end{proof}
\begin{definition} A level subgroup $\tilde{\mathcal{H}}\subset \mathcal{G}(\mathcal{L})$ is a subgroup scheme such that $\mathbb{G}_m \cap \tilde{\mathcal{H}}=\{1\}$.
\end{definition}
A level subgroup will map isomorphically onto its image under $\mathcal{G}(\mathcal{L}) \rightarrow K(\mathcal{L})$. Let us denote the image by $\mathcal{H}$. By Proposition \ref{LevelPull} the level subgroup $\tilde{\mathcal{H}}$ will determine a descended line bundle on $A/\mathcal{H}$.
\begin{propdef}
A subgroup $\tilde{\mathcal{H}}\subset \mathcal{G}(\mathcal{L})$ is called a maximal level subgroup if it is a level subgroup satisfying one (and hence both) of the following two equivalent properties:
\begin{itemize}
\item[i)] $\sharp(\tilde{\mathcal{H}})^2=\sharp(K(\mathcal{L}))$. Here and in the rest of the article $\sharp(\cdot)$ denotes the rank of a finite group scheme.
\item[ii)] Denote by $\mathcal{H}$ the image of $\tilde{\mathcal{H}}$ under $\mathcal{G}(\mathcal{L})\rightarrow K(\mathcal{L})$. Then $\mathcal{H}^\perp=\mathcal{H}$. The orthogonal complement should be taken with respect to the commutator pairing on $K(\mathcal{L})$.
\end{itemize}
If furthermore $k$ is algebraically closed then i) and ii) are equivalent to
\begin{itemize}
\item[iii)] $\tilde{\mathcal{H}}$ is maximal in the set of level subgroups ordered by inclusion.
\end{itemize}
\end{propdef}
\begin{proof}
See \cite[Lemma 8.22]{vdGMooAV}.
\end{proof}
For our applications it will be useful to translate the Proposition \ref{LevelPull} into the language of divisors. This leads to the following definition:
\begin{definition} Let $A/k$ be an Abelian variety and $\mathcal{H}\subset A$ be a subgroup scheme. A closed subscheme $Z\subset A$ is called $\mathcal{H}$-invariant if the scheme theoretic preimages of $Z$ under the two maps
$$\text{pr}_2: \mathcal{H}\times A \rightarrow A,\, m: \mathcal{H}\times A \rightarrow A,\, (h,a)\mapsto h+a$$
agree.
\end{definition}
For the proof of the next proposition we need a technical definition taken from \cite[p. 104]{MumAV}. The reader who does not want to study this proof can skip the definition. It will not be used in other parts of the paper.
\begin{definition}
Let $A/k$ be an Abelian variety, $\mathcal{H}\subset A$ be a subgroup scheme and $\mathcal{F}$ a coherent sheaf on $A$. A \emph{lift of the action} $m: \mathcal{H}\times A \rightarrow A$ to $\mathcal{F}$ is an isomorphism $\lambda:\text{pr}_2^*(\mathcal{F}) \stackrel{\sim}{\rightarrow} m^*(\mathcal{F})$ of sheaves on $\mathcal{H} \times A$ such that the following diagram of sheaves on $\mathcal{H}\times \mathcal{H} \times A$ commutes:
$$\begin{tikzcd}
\text{pr}_3^*(\mathcal{F}) \arrow{rr}{\pr_{23}^*(\lambda)}\arrow{dr}[swap]{(m_{12}\times\id_A)^*(\lambda)} && \xi^*(\mathcal{F})\arrow{dl}{(\id_{\mathcal{H}}\times m_{23})^*(\lambda)} \\
& \eta^*(\mathcal{F})
\end{tikzcd}$$
where $\xi=\pr_{23}\circ\, m,\, \eta=(m_{12}\times\id_A)\circ m=(\id_{\mathcal{H}}\times m_{23})\circ m $.
\end{definition}
We can now state the translation of Proposition \ref{LevelPull} into the language of divisors.
\begin{prop}\label{LevelPullDiv}  Let $A/k$ be an Abelian variety and $\mathcal{H}\subset A$ be a finite subgroup scheme. Suppose an effective divisor $D\subset A$ is $\mathcal{H}$-invariant. Denote by $\mathcal{L}=\mathcal{O}(D)$ and $\pi: A \rightarrow B=A/\mathcal{H}$ the natural quotient map.
\begin{itemize}
\item $\mathcal{H}\subset K(\mathcal{L})$ and there is a natural homomorphism $\mathcal{H}\rightarrow \mathcal{G}(\mathcal{L})$ splitting the exact sequence in Lemma \ref{ThetaExt} over $\mathcal{H}$.
\item $\mathcal{M}=\mathcal{O}(\pi(D))$ is a line bundle satisfying $\mathcal{L}\cong \pi^* \mathcal{M}$. Under the bijection in Proposition \ref{LevelPull} this corresponds to the splitting $\mathcal{H}\rightarrow \mathcal{G}(\mathcal{L})$ above.
\end{itemize}
\end{prop}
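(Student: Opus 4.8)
The plan is to extract from the $\mathcal{H}$-invariance of $D$ a canonical isomorphism of line bundles on $\mathcal{H}\times A$, recognize it as a lift of the action in the sense defined above, and then feed it into the descent machinery of Proposition \ref{LevelPull}.

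First I would unwind the hypothesis. The morphism $\pr_2$ is flat, and $m$ is flat as well (it becomes $\pr_2$ after the automorphism $(h,a)\mapsto(h,h+a)$ of $\mathcal{H}\times A$), so $D$ pulls back to effective Cartier divisors $\pr_2^{-1}(D)$ and $m^{-1}(D)$, and $\mathcal{H}$-invariance says precisely that these two Cartier divisors on $\mathcal{H}\times A$ coincide. Since for a flat morphism $f$ one has $f^*\mathcal{O}(D)=\mathcal{O}(f^{-1}(D))$ canonically, writing $\mathcal{L}=\mathcal{O}(D)$ the equality $\pr_2^{-1}(D)=m^{-1}(D)$ furnishes a canonical isomorphism
$$\lambda:\pr_2^*\mathcal{L}\stackrel{\sim}{\longrightarrow} m^*\mathcal{L},$$
both sides being $\mathcal{O}$ of the very same subscheme.

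Next I would check that $\lambda$ is a lift of the action. Restricting $\lambda$ over the universal point of $\mathcal{H}$ produces, for each $T$-valued point $h$, an isomorphism $\lambda_h:\mathcal{L}_T\stackrel{\sim}{\to} t_h^*\mathcal{L}_T$, i.e. a $T$-point $(h,\lambda_h)$ of $\mathcal{G}(\mathcal{L})$; the commutative triangle over $\mathcal{H}\times\mathcal{H}\times A$ (the cocycle condition) is exactly the assertion that $h\mapsto(h,\lambda_h)$ respects the group law $(x_1,\phi_1)\cdot(x_2,\phi_2)=(x_1+x_2,t_{x_2}^*\phi_1\circ\phi_2)$ of $\mathcal{G}(\mathcal{L})$. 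Because $\lambda$ is built entirely from the equality of two subschemes, all three arrows of the triangle are the canonical identifications attached to the (equal) pullbacks of $D$ along $\xi$ and $\eta$, so commutativity holds by the functoriality of $\mathcal{O}(-)$ for Cartier divisors. This yields a homomorphism $\mathcal{H}\to\mathcal{G}(\mathcal{L})$; since the very existence of the point $(h,\lambda_h)$ requires $\lambda_h$ to be an isomorphism $\mathcal{L}_T\cong t_h^*\mathcal{L}_T$, the composite $\mathcal{H}\to\mathcal{G}(\mathcal{L})\to K(\mathcal{L})$ is the given inclusion, so $\mathcal{H}\subset K(\mathcal{L})$ and we obtain the desired splitting over $\mathcal{H}$, proving the first bullet.

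For the second bullet I would use the same invariance to descend the subscheme $D$: since $D$ is $\mathcal{H}$-stable and $\pi$ is faithfully flat, $D=\pi^{-1}(\pi(D))$, and Cartier-ness being checked after faithfully flat base change, $\pi(D)$ is an effective Cartier divisor on $B$ with a canonical isomorphism $\pi^*\mathcal{O}(\pi(D))\cong\mathcal{O}(\pi^{-1}(\pi(D)))=\mathcal{L}$; thus $\mathcal{M}=\mathcal{O}(\pi(D))$ is a line bundle with $\pi^*\mathcal{M}\cong\mathcal{L}$. It then remains to identify $\mathcal{M}$ with the splitting of the first bullet under Proposition \ref{LevelPull}. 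I expect this last identification to be the main obstacle: one must trace Mumford's construction and confirm that the descent datum on $\mathcal{L}$ determined by $\mathcal{M}$ is exactly the action lift $\lambda$, equivalently that the canonical isomorphism $\pi^*\mathcal{M}\cong\mathcal{L}$ restricts over $\mathcal{H}$ to the same $\lambda_h$ constructed above. Both isomorphisms stem from literal equalities of divisors, compatible via $\pi\circ m=\pi\circ\pr_2$ on $\mathcal{H}\times A$, so the match should fall out of chasing these equalities through the definition of the bijection; making this bookkeeping precise — rather than any geometric input — is the real content of the statement.
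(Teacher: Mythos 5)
Your proof of the first bullet is essentially the paper's own argument: flatness of $\pr_2$ and $m$ (via the shearing automorphism), the canonical isomorphism $\lambda:\pr_2^*\mathcal{O}(D)\stackrel{\sim}{\rightarrow}m^*\mathcal{O}(D)$ coming from the equality $\pr_2^{-1}(D)=m^{-1}(D)$, and commutativity of the cocycle triangle over $\mathcal{H}\times\mathcal{H}\times A$ because all three arrows are induced by the equalities $\pr_3^{-1}(D)=\xi^{-1}(D)=\eta^{-1}(D)$. Your route to the \emph{existence} part of the second bullet is also sound and is a genuine variant: you observe that, under $A\times_B A\cong\mathcal{H}\times A$, the $\mathcal{H}$-invariance of $D$ is exactly an fppf descent datum for the closed subscheme $D$, so $\pi(D)$ is a Cartier divisor on $B$ with $\pi^*\mathcal{O}(\pi(D))\cong\mathcal{O}(D)$; the paper instead gets this as a by-product of its computation with Mumford's invariant direct image.

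The genuine gap is the final claim of the second bullet, which you explicitly defer: that under the bijection of Proposition \ref{LevelPull} the bundle $\mathcal{M}=\mathcal{O}(\pi(D))$ corresponds to the particular splitting $s:\mathcal{H}\rightarrow\mathcal{G}(\mathcal{L})$ built from $\lambda$. This is not bookkeeping that "falls out": the bijection of Proposition \ref{LevelPull} is not defined in terms of descent data of divisors, so your descent construction of $\mathcal{M}$ does not by itself say which splitting it corresponds to. Tracing the proof of \cite[Theorem 8.10]{vdGMooAV}, a splitting corresponds to a lift of the $\mathcal{H}$-action to $\mathcal{L}$, and the descended bundle attached to it is the invariant direct image $\pi_*(\mathcal{L})^{\mathcal{H}}$ of \cite[p.\ 104, Theorem 1]{MumAV}. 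The paper's proof therefore must (i) identify the action lift attached to $s$ with $\lambda$, (ii) replace $D$ by $-D$ so that $\mathcal{L}=\mathcal{O}(-D)$ is an ideal sheaf, (iii) check that the inclusion $\iota:\mathcal{O}(-D)\hookrightarrow\mathcal{O}_A$ is equivariant for this lift, and (iv) compute $\pi_*(\mathcal{O}(-D))^{\mathcal{H}}=\pi_*(\mathcal{O}(-D))\cap\mathcal{O}_B$, which, $\pi$ being affine, is the ideal sheaf of $\pi(D)$. Some argument of this kind (or a proof that the bijection of Proposition \ref{LevelPull} is implemented by fppf descent with the descent datum induced by the splitting, which amounts to the same work) is unavoidable; this compatibility is exactly what the proposition exists to supply, and it is used later in Proposition \ref{ThetaRepPullDiv} and Theorem \ref{Main}, so omitting it leaves the statement unproven.
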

\begin{proof}
This is well-known and similar ideas are contained in \cite{EqDefAV}. As we could not find a proof in the literature we will indicate it here. The reader can skip the proof as it is quite technical and the techniques will not be used in the rest of the paper. Indeed consider the two maps $\text{pr}_2, m: \mathcal{H} \times A \rightarrow A$. Then both maps are flat: Indeed flatness of $\text{pr}_2$ is clear and flatness of $m$ follows from the commutative diagram
$$\begin{tikzcd} \mathcal{H} \times A \arrow[rr, "{(h,a) \mapsto (h,h+a)}"', "\sim" ] \arrow[dr, "m"']&& \mathcal{H} \times A \arrow{ld}{\text{pr}_2}\\
&A
\end{tikzcd}$$
Flatness implies that the natural map $\text{pr}_2^* \mathcal{O}(D) \rightarrow \mathcal{O}(\text{pr}_2^{-1}(D))$ is an isomorphism and similarly $m^* \mathcal{O}(D)\cong \mathcal{O}(m^{-1}(D))$. Because we assumed $D$ to be $\mathcal{H}$-invariant, one has $\text{pr}_2^{-1}(D)=m^{-1}(D)$. Therefore we obtain an isomorphism $\lambda:\text{pr}_2^* \mathcal{O}(D) \stackrel{\sim}{\rightarrow} m^* \mathcal{O}(D)$. By the definition of the theta group, this isomorphism determines a map of schemes $s:\mathcal{H}\rightarrow \mathcal{G}(\mathcal{L})$. We check that $s$ is a group homomorphism. Indeed this means that the two maps $s\circ m_{\mathcal{H}},\, m_{\mathcal{G}(\mathcal{L})\circ (s\times s)} : \mathcal{H}\times \mathcal{H} \rightarrow \mathcal{G}(\mathcal{L})$ are equal. Considered as an element in $\mathcal{G}(\mathcal{L})(\mathcal{H} \times \mathcal{H})$ the first map corresponds to the isomorphism $(m_{1,2}\circ \id_A)^* \lambda$. The other map is the product of the elements $s\circ \pr_1,\, s\circ \pr_2$ in $\mathcal{G}(\mathcal{L})(\mathcal{H} \times \mathcal{H})$. By the definition of the group law on the theta group this product corresponds to the isomorphism $(\id_{\mathcal{H}}\times m_{23})^*(\lambda)\circ \pr_{23}^*(\lambda)$. We are therefore reduced to proving that the following diagram of sheaves on $\mathcal{H}\times \mathcal{H} \times A$ commutes:
$$\begin{tikzcd}
\text{pr}_3^*(\mathcal{O}(D)) \arrow{rr}{\pr_{23}^*(\lambda)}\arrow{dr}[swap]{(m_{12}\times\id_A)^*(\lambda)} && \xi^*(\mathcal{O}(D))\arrow{dl}{(\id_{\mathcal{H}}\times m_{23})^*(\lambda)} \\
& \eta^*(\mathcal{O}(D))
\end{tikzcd}$$
where $\xi=\pr_{23}\circ m,\, \eta=(m_{12}\times\id_A)\circ m=(\id_{\mathcal{H}}\times m_{23})\circ m$. Indeed this is true because $\pr_3^{-1}(D)=\xi^{-1}(D)=\eta^{-1}(D)$ and all the maps are induced from the equality of these divisors. Therefore $s$ must be a group homomorphism.
\par It is clear that the composition $\mathcal{H} \rightarrow \mathcal{G}(\mathcal{L})\rightarrow K(\mathcal{L}) \subset A$ is the natural inclusion. Thus $\mathcal{H}\subset K(\mathcal{L})$ and $\mathcal{H}\rightarrow \mathcal{G}(\mathcal{L})$ is a splitting of the exact sequence in Lemma \ref{ThetaExt} over $\mathcal{H}$.
\par We will now prove the second part of the proposition. It is clear that we can instead consider $\mathcal{L}=\mathcal{O}(-D)$. In the first part of the proof we constructed a group homomorphism $\mathcal{H}\rightarrow \mathcal{G}(\mathcal{L})$ splitting the exact sequence in Lemma \ref{ThetaExt} over $\mathcal{H}$. Then Proposition \ref{LevelPull} gives a line bundle $\mathcal{M}$ on $B$ such that $\mathcal{L}\cong \pi^* \mathcal{M}$. We have to show that $\mathcal{M}=\mathcal{O}(-\pi(D))$. Indeed by the proof of Proposition \ref{LevelPull} (for which we refer to \cite[Theorem 8.10]{vdGMooAV}) the group homomorphism $\mathcal{H}\rightarrow \mathcal{G}(\mathcal{L})$ determines a lift of the multiplication action of $\mathcal{H}$ to $\mathcal{L}$. It is not difficult to check that this lift of the action is given by the isomorphism $\lambda$ from above. Furthermore the natural inclusion $\iota:\mathcal{O}(-D)\rightarrow \mathcal{O}_A$ is equivariant for this action, since this is equivalent to the commutativity of
$$\begin{tikzcd}
\pr_2^* \mathcal{O}(-D) \arrow{r}{\lambda} \arrow{d}{\pr_2^*(\iota)} & m^* \mathcal{O}(-D)\arrow{d}{m^*(\iota)} \\
\pr_2^*\mathcal{O}_A \arrow[equal]{r} & m^* \mathcal{O}_A
\end{tikzcd}$$
where we use the equality $\mathcal{O}_{\mathcal{H}\times A}=\pr_2^*\mathcal{O}_A= m^* \mathcal{O}_A$. Now by \cite[p. 104, Theorem 1]{MumAV} $\mathcal{M}=\pi_* (\mathcal{L})^{\mathcal{H}}$ where $\pi_* (\mathcal{L})^{\mathcal{H}}\subseteq \pi_* (\mathcal{L})$ is the subsheaf of $\mathcal{H}$-invariant sections as defined in the proof of the quoted theorem. But since the inclusion $\mathcal{O}(-D)\subset \mathcal{O}_A$ is $\mathcal{H}$-equivariant we have
$$\mathcal{M}=\pi_*(\mathcal{L})^{\mathcal{H}}=\pi_*(\mathcal{O}(-D))\cap \pi_*(\mathcal{O}_A)^{\mathcal{H}}=\pi_*(\mathcal{O}(-D))\cap \mathcal{O}_B\,.$$
Because $\pi$ is finite and thus affine, the latter sheaf equals the coherent sheaf of ideals whose vanishing set is the closed set $\pi(D)$ (this can be checked on affine opens). We conclude $\mathcal{M}=\mathcal{O}(-\pi(D))$. This finishes the proof of the proposition.
\end{proof}
\begin{prop}\label{Pull} Let $A$ be an Abelian variety over an algebraically closed or finite field $k$. Let $\mathcal{L}$ be a line bundle on $A$ such that $K(\mathcal{L})$ contains a non-zero Abelian subvariety $B\subseteq A$. Denote by $\pi: A\longrightarrow A/B$ the quotient map. Then there is a line bundle $\mathcal{L}_0$ on $A$ algebraically equivalent to $\mathcal{O}_A$ and a line bundle $\mathcal{L}_{A/B}$ on $A/B$ such that $\mathcal{L}\cong \mathcal{L}_0\otimes \pi^* \mathcal{L}_{A/B}$
\end{prop}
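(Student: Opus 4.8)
The plan is to compare $\mathcal{L}$ with pullbacks from $A/B$ at the level of the Néron--Severi group, using the homomorphism $\lambda_\mathcal{L}\colon A\to A^\vee$ attached to $\mathcal{L}$ and the functoriality formula $\lambda_{f^*\mathcal{M}}=f^\vee\circ\lambda_\mathcal{M}\circ f$. Write $\iota\colon B\hookrightarrow A$ for the inclusion. The hypothesis $B\subseteq K(\mathcal{L})=\ker(\lambda_\mathcal{L})$ says exactly that $\lambda_\mathcal{L}\circ\iota=0$. Since $\lambda_\mathcal{L}$ is symmetric, i.e.\ $\lambda_\mathcal{L}^\vee=\lambda_\mathcal{L}$, dualising this identity gives $\iota^\vee\circ\lambda_\mathcal{L}=(\lambda_\mathcal{L}\circ\iota)^\vee=0$ as well. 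So $\lambda_\mathcal{L}$ both kills $B$ and has image contained in $\ker(\iota^\vee)$.

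Next I would exploit exactness of duality for the sequence $0\to B\xrightarrow{\iota} A\xrightarrow{\pi} A/B\to 0$, which yields an exact sequence $0\to (A/B)^\vee\xrightarrow{\pi^\vee}A^\vee\xrightarrow{\iota^\vee}B^\vee\to 0$ with $\pi^\vee$ a closed immersion. From $\lambda_\mathcal{L}\circ\iota=0$ and surjectivity of $\pi$, the map $\lambda_\mathcal{L}$ factors as $\lambda_\mathcal{L}=\phi\circ\pi$ for a unique $\phi\colon A/B\to A^\vee$; from $\iota^\vee\circ\lambda_\mathcal{L}=0$ together with $\lambda_\mathcal{L}=\phi\circ\pi$ and $\pi$ surjective we get $\im(\phi)\subseteq\ker(\iota^\vee)=\im(\pi^\vee)$, so $\phi$ lifts uniquely through the closed immersion $\pi^\vee$ to a homomorphism $\bar\lambda\colon A/B\to (A/B)^\vee$. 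Altogether $\lambda_\mathcal{L}=\pi^\vee\circ\bar\lambda\circ\pi$. Dualising this equation and using that $\pi$ is an epimorphism and $\pi^\vee$ a monomorphism forces $\bar\lambda^\vee=\bar\lambda$, so $\bar\lambda$ is symmetric.

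To finish I would invoke the classical fact that over an algebraically closed (or finite) field the assignment $\mathcal{M}\mapsto\lambda_\mathcal{M}$ induces an isomorphism $\mathrm{NS}(A/B)\xrightarrow{\sim}\Hom^{\mathrm{sym}}(A/B,(A/B)^\vee)$ onto the symmetric homomorphisms. Applying this to the symmetric $\bar\lambda$ produces a line bundle $\mathcal{L}_{A/B}$ on $A/B$ with $\lambda_{\mathcal{L}_{A/B}}=\bar\lambda$. The functoriality formula then gives $\lambda_{\pi^*\mathcal{L}_{A/B}}=\pi^\vee\circ\bar\lambda\circ\pi=\lambda_\mathcal{L}$, so the line bundle $\mathcal{L}_0:=\mathcal{L}\otimes\pi^*\mathcal{L}_{A/B}^{-1}$ satisfies $\lambda_{\mathcal{L}_0}=0$; a line bundle with trivial $\lambda$ lies in $\Pic^0(A)$, i.e.\ is algebraically equivalent to $\mathcal{O}_A$. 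This gives $\mathcal{L}\cong\mathcal{L}_0\otimes\pi^*\mathcal{L}_{A/B}$, as required.

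The heart of the argument---and, I suspect, the reason the hypotheses restrict to algebraically closed or finite fields---is the surjectivity of the Néron--Severi map onto symmetric homomorphisms, i.e.\ realising the abstractly constructed $\bar\lambda$ by an honest line bundle. Over an algebraically closed field this is standard. Over a finite field $k$ one descends from $\bar k$: the class in $\mathrm{NS}((A/B)_{\bar k})$ produced above is $\mathrm{Gal}(\bar k/k)$-stable, and Lang's theorem (giving $H^1(\mathrm{Gal}(\bar k/k),\Pic^0)=0$) together with $\mathrm{Br}(k)=0$ guarantees that it arises from a $k$-rational line bundle. I expect this realisation/descent step to be the only genuine obstacle; everything preceding it is a formal consequence of the symmetry of $\lambda_\mathcal{L}$ and the exactness of duality.
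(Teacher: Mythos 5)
Your argument is correct, but it follows a genuinely different route from the paper's. The paper never factors $\lambda_\mathcal{L}$: it restricts $\mathcal{L}$ to $B$, notes that $B\subseteq K(\mathcal{L})$ makes $\mathcal{L}_{|B}$ a point of $B^\vee(k)=\Pic^0(B)(k)$, and uses exactness of $0\to(A/B)^\vee(k)\to A^\vee(k)\to B^\vee(k)\to 0$ (Lang's theorem over finite fields, the Nullstellensatz over algebraically closed ones) to find $\mathcal{L}_0\in A^\vee(k)$ with $(\mathcal{L}_0^{-1}\otimes\mathcal{L})_{|B}\cong\mathcal{O}_B$; it then restricts the theta-group extension $0\to\mathbb{G}_m\to\mathcal{G}(\mathcal{L})\to K(\mathcal{L})\to 0$ to $B$, identifies the result with the Weil--Barsotti extension class of $\mathcal{L}_{|B}$ in $\Ext^1(B,\mathbb{G}_m)\cong B^\vee(k)$, concludes that the extension splits, and descends $\mathcal{L}_0^{-1}\otimes\mathcal{L}$ along $\pi$ via the level-subgroup criterion of Proposition \ref{LevelPull}. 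You instead work entirely with homomorphisms to the dual: your factorization $\lambda_\mathcal{L}=\pi^\vee\circ\bar\lambda\circ\pi$ with $\bar\lambda$ symmetric is a clean formal consequence of the exact dual sequence, but realizing $\bar\lambda$ as $\lambda_{\mathcal{L}_{A/B}}$ requires Mumford's theorem that $\mathrm{NS}$ maps onto $\Hom^{\mathrm{sym}}$ over an algebraically closed field --- a heavier classical input whose proof is delicate in characteristic $2$ --- plus an extra Galois-descent layer over finite fields. Both proofs use Lang's theorem, but at different spots: the paper for surjectivity of $A^\vee(k)\to B^\vee(k)$, you for descending the N\'eron--Severi class (where, incidentally, you do not need $\mathrm{Br}(k)=0$, since $A/B$ has a rational point). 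What each buys: the paper's proof stays inside the theta-group machinery it has already built, which is natural since descent of line bundles along quotients is exactly what that machinery is for; yours avoids theta groups and level subgroups altogether, at the cost of importing the symmetric-homomorphism theorem, and it isolates precisely where the hypothesis on $k$ enters.
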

\begin{proof}
The case where $k$ is algebraically closed is given as an exercise in \cite[11.3]{vdGMooAV}. As there is not proof given there we shall include it here: Consider the restriction $\mathcal{L}_{|B}$. Since $B\subseteq K(\mathcal{L})$, the line bundle $\mathcal{L}_{|B}$ is algebraically equivalent to $\mathcal{O}_B$. Therefore it gives an element in $B^\vee(k)=\Pic^0(B)(k)$. Consider the exact sequence
$$0\longrightarrow (A/B)^\vee \longrightarrow A^\vee \longrightarrow B^\vee \longrightarrow 0\,.$$
Since $k$ is finite or algebraically closed the sequence
$$0\longrightarrow (A/B)^\vee(k) \longrightarrow A^\vee(k) \longrightarrow B^\vee(k) \longrightarrow 0$$
is exact by Lang's theorem or Hilbert's Nullstellensatz respectively. Therefore there is a line bundle $\mathcal{L}_0 \in A^\vee(k)$ such that $(\mathcal{L}_0^{-1}\otimes \mathcal{L})_{|B}\cong \mathcal{O}_B$. It remains to show that $\mathcal{L}_0^{-1}\otimes \mathcal{L}$ descends to $A/B$. Indeed after replacing $\mathcal{L}$ with $\mathcal{L}_0^{-1}\otimes \mathcal{L}$ we will now assume that $\mathcal{L}_{|B} \cong \mathcal{O}_B$. Consider the exact sequence
$$0\longrightarrow \mathbb{G}_m \longrightarrow \mathcal{G}(\mathcal{L})\longrightarrow K(\mathcal{L}) \longrightarrow 0$$
and push it out along $B\subseteq K(\mathcal{L})$: 
$$0\longrightarrow \mathbb{G}_m \longrightarrow \mathcal{G}(\mathcal{L})_{|B}\longrightarrow B \longrightarrow 0\,.$$
The resulting exact sequence is the sequence of $\mathcal{G}(\mathcal{L}_{|B})$ on $B$. This is the extension corresponding to $\mathcal{L}_{|B}$ under the identification $B^\vee(k)= \Ext^1(B,\mathbb{G}_m)$. Therefore the exact sequence splits. This implies that $B$ lifts to a level group in $\mathcal{G}(\mathcal{L})$. We conclude that $\mathcal{L}$ descends to $A/B$.
\end{proof}
\begin{definition}\label{DefHeis} Let $\mathcal{H}$ be a finite $k$-group scheme. We will define a group scheme $\mathcal{G}(\mathcal{H})$, the so called Heisenberg group associated to $\mathcal{H}$: As a scheme
$$\mathcal{G}(\mathcal{H})=\mathbb{G}_m \times \mathcal{H} \times \mathcal{H}^D$$
and the group structure is defined on $T$-valued points by
$$(r, h, \chi)\cdot(r', h', \chi')=(r r' \chi(h'), h+h', \chi+\chi')$$
where $\mathcal{H}^D$ denotes the Cartier dual of $\mathcal{H}$. We use the identification $\mathcal{H}^D(T)=\Hom_T(\mathcal{H}_T, \mathbb{G}_{m,T} )$.
\end{definition}
Notice that Heisenberg groups have very similar properties as the theta group of an ample line bundle. However such a theta group will not always be isomorphic to a Heisenberg group. On the other hand it will be isomorphic to a Heisenberg group if the degree of $\mathcal{L}$ is not divisible by the characteristic of the ground field. Another example will be the line bundle appearing in the construction of Moret-Bailly although in this case $\deg \mathcal{L}=p$. There is the following characterization for the theta group being a Heisenberg group:
\begin{lem}\label{Heis} Let $\mathcal{L}$ be an ample line bundle on an Abelian variety $A$. Suppose there are two maximal level subgroups $\tilde{\mathcal{H}}_1, \tilde{\mathcal{H}_2} \subset \mathcal{G}(\mathcal{L})$ with\\
$\tilde{\mathcal{H}}_1\cap  \tilde{\mathcal{H}_2}=\{1\}$ (scheme-theoretic intersection). Then $\tilde{\mathcal{H}}_1\cong \tilde{\mathcal{H}}_2^D$ and\\
 $\mathcal{G}(\mathcal{L})\cong \mathcal{G}(\tilde{\mathcal{H}}_1)$.
\end{lem}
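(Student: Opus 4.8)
The plan is to use the two level subgroups to trivialise the extension of Lemma~\ref{ThetaExt} and then recognise the resulting group law as the Heisenberg law of Definition~\ref{DefHeis}. Throughout write $\mathcal{H}_i\subset K(\mathcal{L})$ for the image of $\tilde{\mathcal{H}}_i$ under $\mathcal{G}(\mathcal{L})\to K(\mathcal{L})$. Since $\tilde{\mathcal{H}}_i$ is a level subgroup it maps isomorphically onto $\mathcal{H}_i$; denote by $s_i\colon \mathcal{H}_i \xrightarrow{\sim}\tilde{\mathcal{H}}_i$ the inverse. Maximality gives $\mathcal{H}_i^\perp=\mathcal{H}_i$ and $\sharp\mathcal{H}_i=\sqrt{\sharp K(\mathcal{L})}$ with respect to the commutator pairing $e^\mathcal{L}$.

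The first and decisive step is to establish the internal decomposition $K(\mathcal{L})=\mathcal{H}_1\oplus\mathcal{H}_2$. Here one must compare the scheme-theoretic intersection $\tilde{\mathcal{H}}_1\cap\tilde{\mathcal{H}}_2$ inside $\mathcal{G}(\mathcal{L})$ with $\mathcal{K}:=\mathcal{H}_1\cap\mathcal{H}_2$ inside $K(\mathcal{L})$. Because $\mathbb{G}_m$ is central, $c(h)=s_1(h)s_2(h)^{-1}$ defines a homomorphism $c\colon\mathcal{K}\to\mathbb{G}_m$ whose kernel is exactly $\tilde{\mathcal{H}}_1\cap\tilde{\mathcal{H}}_2$. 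Thus the hypothesis forces $c$ to be a monomorphism. In the case relevant for Moret-Bailly's construction $K(\mathcal{L})$ is unipotent, hence $\Hom(\mathcal{K},\mathbb{G}_m)=0$ and $\mathcal{K}=\{1\}$. Once $\mathcal{K}$ is trivial the multiplication map $\mathcal{H}_1\times\mathcal{H}_2\to K(\mathcal{L})$ is a monomorphism of finite group schemes of equal rank $\sharp\mathcal{H}_1\cdot\sharp\mathcal{H}_2=\sharp K(\mathcal{L})$, hence an isomorphism.

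Granting the decomposition, the remaining steps are formal. The restriction of $e^\mathcal{L}$ to $\mathcal{H}_1\times\mathcal{H}_2$ has both of its kernels equal to $\mathcal{K}=\{1\}$ (using $\mathcal{H}_1^\perp=\mathcal{H}_1$ and $\mathcal{H}_2^\perp=\mathcal{H}_2$), so it is perfect and identifies $\mathcal{H}_2$ with the Cartier dual $\mathcal{H}_1^D$; transporting along $s_1,s_2$ yields $\tilde{\mathcal{H}}_2\cong\tilde{\mathcal{H}}_1^D$, which is the first assertion. For the second I would consider the multiplication morphism $\Phi\colon\mathbb{G}_m\times\tilde{\mathcal{H}}_1\times\tilde{\mathcal{H}}_2\to\mathcal{G}(\mathcal{L})$, $(r,a,b)\mapsto rab$. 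Triviality of $\mathcal{K}$ makes its fibre over the identity trivial, so $\Phi$ is a monomorphism, and it fits into a morphism between the short exact sequence of Lemma~\ref{ThetaExt} and the split extension, inducing the identity on $\mathbb{G}_m$ and the isomorphism $\mathcal{H}_1\times\mathcal{H}_2\cong K(\mathcal{L})$; comparing the two sequences shows $\Phi$ is an isomorphism of schemes. Transporting the group law of $\mathcal{G}(\mathcal{L})$ through $\Phi$ and moving $b$ past $a'$ produces a single commutator factor $e^\mathcal{L}(h_1',h_2)^{-1}$, so the law becomes $(r,a,b)\ast(r',a',b')=(rr'\,e^\mathcal{L}(h_1',h_2)^{-1},\,aa',\,bb')$; under the identification $\tilde{\mathcal{H}}_2\cong\tilde{\mathcal{H}}_1^D$ just constructed this is precisely the Heisenberg law, giving $\mathcal{G}(\mathcal{L})\cong\mathcal{G}(\tilde{\mathcal{H}}_1)$.

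The main obstacle is the first step. The point is that $\tilde{\mathcal{H}}_1\cap\tilde{\mathcal{H}}_2=\{1\}$ inside $\mathcal{G}(\mathcal{L})$ is genuinely weaker than $\mathcal{H}_1\cap\mathcal{H}_2=\{1\}$ inside $K(\mathcal{L})$: the two differ by the character $c$, and the hypothesis only yields that $c$ is injective. Extracting $\mathcal{K}=\{1\}$ therefore requires the extra input that $K(\mathcal{L})$ admits no nontrivial characters to $\mathbb{G}_m$, which holds when $K(\mathcal{L})$ is unipotent as in the application. Everything after the decomposition is a rank count, the perfectness of a restricted pairing, and a one-line commutator computation.
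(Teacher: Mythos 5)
The paper does not actually supply an argument here: its ``proof'' is a citation of the proof of \cite[Lemma 8.24]{vdGMooAV}. Your second and third paragraphs reconstruct what that argument does once one knows that the \emph{images} satisfy $\mathcal{H}_1\cap\mathcal{H}_2=\{1\}$ inside $K(\mathcal{L})$: the multiplication map $\mathcal{H}_1\times\mathcal{H}_2\to K(\mathcal{L})$ is a monomorphism between finite group schemes of equal rank, hence an isomorphism; $e^{\mathcal{L}}$ restricts to a perfect pairing identifying $\mathcal{H}_2\cong\mathcal{H}_1^D$; and transporting the group law through $\Phi$ yields the Heisenberg law of Definition~\ref{DefHeis}. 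That part of your proposal is correct, up to one cosmetic slip: $\Phi$ is not a group homomorphism, so a trivial fibre over the identity does not formally give that $\Phi$ is a monomorphism. Instead check injectivity on $T$-points directly (project to $K(\mathcal{L})$ and use $\mathcal{H}_1\cap\mathcal{H}_2=\{1\}$ to see that the $\tilde{\mathcal{H}}_i$-components agree, then cancel), or note that $\Phi$ is $\mathbb{G}_m$-equivariant and covers the isomorphism $\mathcal{H}_1\times\mathcal{H}_2\cong K(\mathcal{L})$, hence is an isomorphism of $\mathbb{G}_m$-torsors.

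The genuine divergence is your first step, and while your diagnosis of the sore spot is exactly right, your remedy proves a different statement than the one printed. The printed hypothesis is $\tilde{\mathcal{H}}_1\cap\tilde{\mathcal{H}}_2=\{1\}$ in $\mathcal{G}(\mathcal{L})$, which, as you observe, only makes the character $c:\mathcal{K}\to\mathbb{G}_m$ injective; to conclude $\mathcal{K}=\{1\}$ you must assume $K(\mathcal{L})$ unipotent, which is not among the hypotheses. So your proof does not establish the lemma as stated --- and in fact no proof can, because the literal statement is false. Take $E$ ordinary over $\overline{\mathbb{F}_p}$, let $\mathcal{M}$ be a principal polarization on $E^{(p)}$ and $\mathcal{L}=F^{*}\mathcal{M}$, so that $K(\mathcal{L})=E[p]\cong\mu_p\times\mathbb{Z}/p$. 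The $p$ distinct descents of $\mathcal{L}$ along $F$ (a torsor under $\ker(F^{\vee})(k)\cong\mathbb{Z}/p$) give $p$ distinct maximal level subgroups all lying over the same image $\mu_p=\ker F$; any two of them differ by a non-trivial character $\mu_p\to\mathbb{G}_m$, $x\mapsto x^{i}$, which has trivial kernel, so their scheme-theoretic intersection in $\mathcal{G}(\mathcal{L})$ is trivial --- yet $\tilde{\mathcal{H}}_1\cong\mu_p\not\cong\mathbb{Z}/p\cong\tilde{\mathcal{H}}_2^D$. The cited source assumes instead that the images intersect trivially, and under that hypothesis your remaining steps go through with no unipotence assumption at all. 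Either correction is harmless for this paper: in the only application (Proposition~\ref{RedTheta}) the images $\mathcal{H},\mathcal{H}'$ are distinct copies of $\alpha_p$ inside $E^2[F]\cong\alpha_p^2$, so they do intersect trivially, and $K(\mathcal{L})$ is unipotent, so your variant applies as well. You should just state explicitly that what you prove is this corrected form of the lemma, not the statement as printed.
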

\begin{proof}
Follows immediately from the proof of \cite[Lemma 8.24]{vdGMooAV}. The assumption that $k$ is algebraically closed is not needed for the part we use.
\end{proof}
\subsection{Representations of the Theta group}
In this section we recall the important properties of the action of $\mathcal{G}(\mathcal{L})$ on the global sections $\HH^0(A, \mathcal{L})$. First there is the following definition:
\begin{definition} Let $\mathcal{L}$ be a line bundle on an Abelian variety $A/k$. We say that a linear representation of $\mathcal{G}(\mathcal{L})$ has \emph{weight} $n\in \mathbb{Z}$ if the restriction to $\mathbb{G}_m$ is the character $z\mapsto z^n$.
\end{definition}
\begin{thm}\label{ThetaRep} Let $\mathcal{L}$ be an ample line bundle on an Abelian variety $A/k$. Then there is a natural absolutely irreducible weight 1 representation of the group scheme $\mathcal{G}(\mathcal{L})$ on $\HH^0(A, \mathcal{L})$.
\end{thm}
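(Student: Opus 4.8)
The plan is to exhibit the action explicitly, read off its weight, and then spend the real effort on absolute irreducibility. For the construction I would work functorially. Given a test scheme $T$ and a point $(x,\phi)\in\mathcal{G}(\mathcal{L})(T)$, so $x\in A(T)$ and $\phi:\mathcal{L}_T\stackrel{\sim}{\longrightarrow}t_x^*\mathcal{L}_T$, I define an operator on $\HH^0(A_T,\mathcal{L}_T)$ by $s\mapsto t_{-x}^*(\phi(s))$; since $t_{-x}^*t_x^*\mathcal{L}_T=\mathcal{L}_T$, this again lands in global sections of $\mathcal{L}_T$. Because $\mathcal{L}$ is ample one has $\HH^i(A,\mathcal{L})=0$ for $i>0$, so by cohomology and base change $\HH^0(A_T,\mathcal{L}_T)=\HH^0(A,\mathcal{L})\otimes_k\mathcal{O}_T(T)$, and this assignment is visibly canonical, giving a natural morphism $\mathcal{G}(\mathcal{L})\to\Gl(\HH^0(A,\mathcal{L}))$ once I check it respects the group law. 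For the multiplication $(x_1,\phi_1)\cdot(x_2,\phi_2)=(x_1+x_2,\,t_{x_2}^*\phi_1\circ\phi_2)$ of Lemma \ref{ThetaExt}, writing $\sigma=t_{-x_2}^*(\phi_2(s))$ one has $(t_{x_2}^*\phi_1)(\phi_2(s))=t_{x_2}^*(\phi_1(\sigma))$ by functoriality of translation-pullback, and then $t_{-(x_1+x_2)}^*\circ t_{x_2}^*=t_{-x_1}^*$ shows that the two composites agree; so the assignment is a homomorphism. Restricting to the central $\mathbb{G}_m$ means $x=0$ and $\phi$ equal to multiplication by $r\in\mathcal{O}_T(T)^\times$, in which case $t_{-x}^*=\mathrm{id}$ and the operator is multiplication by $r$. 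Hence the representation has weight $1$.

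For absolute irreducibility I may assume $k=\bar k$, since the construction commutes with base change and $\mathcal{G}(\mathcal{L})_{\bar k}=\mathcal{G}(\mathcal{L}_{\bar k})$, so that absolute irreducibility reduces to irreducibility after this extension. The core idea is to compare $\HH^0(A,\mathcal{L})$ with its invariants under a maximal level subgroup. Choose a maximal level subgroup $\tilde{\mathcal{H}}\subset\mathcal{G}(\mathcal{L})$ with image $\mathcal{H}\subset K(\mathcal{L})$. By Proposition \ref{LevelPull} the splitting $\tilde{\mathcal{H}}$ descends $\mathcal{L}$ to a line bundle $\mathcal{M}$ on $B=A/\mathcal{H}$ with $\pi^*\mathcal{M}\cong\mathcal{L}$, and by the construction of descent the $\tilde{\mathcal{H}}$-invariant sections are exactly the pulled-back ones, i.e. $\HH^0(A,\mathcal{L})^{\tilde{\mathcal{H}}}=\HH^0(B,\mathcal{M})$. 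A Riemann--Roch count using $\chi(\mathcal{L})^2=\deg\mathcal{L}=\sharp K(\mathcal{L})=(\sharp\mathcal{H})^2$ together with $\chi(\pi^*\mathcal{M})=\deg(\pi)\,\chi(\mathcal{M})=\sharp\mathcal{H}\cdot\chi(\mathcal{M})$ forces $\chi(\mathcal{M})=1$, so $\mathcal{M}$ is a principal polarization and $\dim_k\HH^0(A,\mathcal{L})^{\tilde{\mathcal{H}}}=1$, while $\dim_k\HH^0(A,\mathcal{L})=\chi(\mathcal{L})=\sharp\mathcal{H}$.

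To convert these dimension facts into irreducibility I would invoke the Heisenberg structure. Over $\bar k$ the non-degenerate commutator pairing $e^{\mathcal{L}}$ on $K(\mathcal{L})$ admits a Lagrangian decomposition, producing a second maximal level subgroup $\tilde{\mathcal{H}}'$ with $\tilde{\mathcal{H}}\cap\tilde{\mathcal{H}}'=\{1\}$, whereupon Lemma \ref{Heis} identifies $\mathcal{G}(\mathcal{L})\cong\mathcal{G}(\mathcal{H})$ with the Heisenberg group of Definition \ref{DefHeis}. For such a group the functor $V\mapsto V^{\tilde{\mathcal{H}}}$ is an equivalence from weight-$1$ representations to vector spaces, with quasi-inverse the Schr\"odinger model on $\mathcal{O}(\mathcal{H}')$; in particular any weight-$1$ representation satisfies $\dim V=\sharp\mathcal{H}\cdot\dim V^{\tilde{\mathcal{H}}}$ and is irreducible exactly when $\dim V^{\tilde{\mathcal{H}}}=1$. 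Applying this to $V=\HH^0(A,\mathcal{L})$ concludes the argument. The hard part is precisely this last structural input: establishing the Lagrangian decomposition of the symplectic group scheme $(K(\mathcal{L}),e^{\mathcal{L}})$ over $\bar k$ so that Lemma \ref{Heis} applies, and the accompanying Stone--von Neumann statement that one-dimensional $\tilde{\mathcal{H}}$-invariants force irreducibility. In the inseparable situations of interest here, where $K(\mathcal{L})$ is an $\alpha$-group, this decomposition can be made explicit through Theorem \ref{AlphaThm}, which is exactly what renders the whole argument effective.
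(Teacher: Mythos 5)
Your first paragraph is fine: the operator $s\mapsto t_{-x}^*(\phi(s))$ is exactly the natural action, your cocycle check against the group law of Lemma \ref{ThetaExt} is correct, and the weight-$1$ computation is immediate. (For what it is worth, the paper does not reprove this theorem at all; it cites Mumford's letter published by Sekiguchi.) The descent and dimension counts in your second paragraph are also sound as far as they go. The fatal step is the one your irreducibility argument hinges on: the claim that over $\bar k$ the commutator pairing on $K(\mathcal{L})$ \emph{admits a Lagrangian decomposition}, producing a second maximal level subgroup $\tilde{\mathcal{H}}'$ with $\tilde{\mathcal{H}}\cap\tilde{\mathcal{H}}'=\{1\}$, so that Lemma \ref{Heis} yields $\mathcal{G}(\mathcal{L})\cong\mathcal{G}(\mathcal{H})$. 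This is false in general when $\operatorname{char}(k)=p$ divides $\deg\mathcal{L}$; it is precisely the failure of Mumford's structure theorem in the inseparable case that the paper flags in its introduction. Concretely, let $A=E$ be a supersingular elliptic curve and $\mathcal{L}=\mathcal{O}(p\cdot 0)$. Then $K(\mathcal{L})=E[p]$ is a local-local, indecomposable (non-split self-extension of $\alpha_p$) group scheme whose unique subgroup scheme of order $p$ is $E[F]\cong\alpha_p$. Hence there is at most one maximal level subgroup, certainly no pair with trivial intersection, and $\mathcal{G}(\mathcal{L})$ cannot be isomorphic to any Heisenberg group $\mathcal{G}(\mathcal{H})$, since that would force $K(\mathcal{L})\cong\mathcal{H}\times\mathcal{H}^D$ to be a product. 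Yet Theorem \ref{ThetaRep} is true for this $\mathcal{L}$ as well, so no argument routed through the Heisenberg model can prove the theorem as stated.

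Your closing remark — that in the situations of interest $K(\mathcal{L})$ is an $\alpha$-group, where the decomposition can be made explicit via Theorem \ref{AlphaThm} — does not repair this, because the theorem is asserted for \emph{every} ample line bundle on \emph{every} abelian variety, and the irreducibility statement is needed in that generality (it is an input to Theorem \ref{ThetaRepUniq}). Even in the paper's special situation where $K(\mathcal{L})\cong\alpha_p\times\alpha_p$, the two complementary level subgroups are produced not by abstract symplectic linear algebra but by geometric input, namely two reducible fibers (Corollary \ref{RedQuot}, Proposition \ref{RedTheta}); the paper explicitly describes the applicability of Lemma \ref{Heis} there as a fortunate special feature, not a general fact. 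The paper's own proof is the citation to Sekiguchi, where Mumford establishes absolute irreducibility of $\HH^0(A,\mathcal{L})$ directly for arbitrary ample $\mathcal{L}$, with no Heisenberg model and no Lagrangian decomposition. (A secondary issue: your Stone--von Neumann equivalence $V\mapsto V^{\tilde{\mathcal{H}}}$ for $\mathcal{G}(\mathcal{H})$ is itself deferred rather than proved, and even the existence of a single maximal level subgroup for arbitrary $\mathcal{L}$ in the inseparable case requires justification; but the decisive gap is the nonexistent complementary pair.)
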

\begin{proof}
See \cite[p. 710]{Sekiguchi}.
\end{proof}
\begin{thm}\label{ThetaRepUniq} Let $\mathcal{L}$ be an ample line bundle on an Abelian variety $A/k$. Then there is a unique absolutely irreducible weight 1 representation $V$ of $\mathcal{G}(\mathcal{L})$. Its dimension is $\dim(V)=\sqrt{\sharp ( K(\mathcal{L}))}=\deg(\mathcal{L})$.
\end{thm}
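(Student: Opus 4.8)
The plan is to convert the statement into pure module theory over a single finite-dimensional $k$-algebra and then to pin that algebra down as a matrix algebra. First I would attach to the central extension $0\to\mathbb{G}_m\to\mathcal{G}(\mathcal{L})\to K(\mathcal{L})\to 0$ of Lemma \ref{ThetaExt} a finite-dimensional $k$-algebra $\mathcal{R}$—a twisted group algebra of the finite group scheme $K(\mathcal{L})$, whose multiplication encodes the commutator structure of $\mathcal{G}(\mathcal{L})$—together with a canonical equivalence between $\mathcal{R}$-modules and weight $1$ representations of $\mathcal{G}(\mathcal{L})$ (the central $\mathbb{G}_m$ having been ``integrated out'' via the weight $1$ character). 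The underlying vector space of $\mathcal{R}$ is the distribution algebra of $K(\mathcal{L})$, so $\dim_k\mathcal{R}=\sharp(K(\mathcal{L}))$. After this reduction the theorem is exactly the assertion that $\mathcal{R}$ has, up to isomorphism, a unique simple module, of dimension $\sqrt{\sharp(K(\mathcal{L}))}$.

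Next I would feed in the one representation we already possess. By Theorem \ref{ThetaRep} the space $\HH^0(A,\mathcal{L})$ is an absolutely irreducible weight $1$ representation, hence an absolutely simple $\mathcal{R}$-module $W$; the Jacobson density (Burnside) theorem then makes the structure map $\mathcal{R}\to\End_k(W)$ surjective. Writing $m=\dim_k W$, ampleness of $\mathcal{L}$ and Riemann--Roch on $A$ give $m=\dim_k\HH^0(A,\mathcal{L})=\chi(\mathcal{L})$, while $\chi(\mathcal{L})^2=\deg(\lambda_{\mathcal{L}})=\sharp(K(\mathcal{L}))$; thus $m^2=\sharp(K(\mathcal{L}))=\dim_k\mathcal{R}$. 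The surjection $\mathcal{R}\to\End_k(W)\cong M_m(k)$ between $k$-spaces of equal dimension is therefore an isomorphism, so $\mathcal{R}\cong M_m(k)$ with $m=\sqrt{\sharp(K(\mathcal{L}))}$.

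The conclusions are then read off from the representation theory of a matrix algebra: $M_m(k)$ has a unique simple module up to isomorphism, of dimension $m$, and every module is a direct sum of copies of it. This yields the uniqueness of the absolutely irreducible weight $1$ representation $V$ and the value $\dim_k V=m=\sqrt{\sharp(K(\mathcal{L}))}$, which equals $\deg(\mathcal{L})$ under the convention $\deg(\mathcal{L})=\chi(\mathcal{L})$. I expect the main obstacle to be the construction of $\mathcal{R}$ and the identity $\dim_k\mathcal{R}=\sharp(K(\mathcal{L}))$ in the inseparable case: when $K(\mathcal{L})$ is non-reduced there is no basis indexed by group elements, so both the twisted multiplication and the equivalence with weight $1$ representations must be set up with the distribution algebra of the non-étale group scheme rather than with a group-algebra basis, and it is here that non-degeneracy of the commutator pairing $e^{\mathcal{L}}$—equivalently, that $\mathbb{G}_m$ is exactly the center of $\mathcal{G}(\mathcal{L})$—enters. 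A more hands-on alternative fixes a maximal level subgroup $\tilde{\mathcal{H}}$, whose image in $K(\mathcal{L})$ is maximal isotropic, and builds the candidate $V$ as the representation induced from the weight $1$ character of $\mathbb{G}_m\times\tilde{\mathcal{H}}$; there the delicate point becomes showing that the appropriate $\tilde{\mathcal{H}}$-semi-invariants of an arbitrary nonzero weight $1$ representation do not vanish, which is again subtle for non-étale $\tilde{\mathcal{H}}$.
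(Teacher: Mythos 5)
Your proposal is correct, but it follows a genuinely different route from the paper's. The paper's own proof is deliberately short: existence comes from Theorem \ref{ThetaRep}, uniqueness over an algebraically closed field is cited from \cite[Theorem 8.32]{vdGMooAV}, and the general case is reduced to that one by descent, via Lemma \ref{repdecent} ($\Hom$-spaces of representations commute with base field extension, plus Schur). You instead argue directly over the given field: you identify weight $1$ representations of $\mathcal{G}(\mathcal{L})$ with modules over a twisted distribution algebra $\mathcal{R}$ of $K(\mathcal{L})$ with $\dim_k\mathcal{R}=\sharp(K(\mathcal{L}))$, use Theorem \ref{ThetaRep} to get one absolutely simple module $W=\HH^0(A,\mathcal{L})$, apply Burnside/Jacobson density to get a surjection $\mathcal{R}\twoheadrightarrow\End_k(W)$, and then the numerics $\dim_k W=\chi(\mathcal{L})$ (vanishing of higher cohomology of an ample bundle on an Abelian variety) and $\chi(\mathcal{L})^2=\sharp(K(\mathcal{L}))$ (Riemann--Roch) force $\mathcal{R}\cong M_m(k)$ with $m=\sqrt{\sharp(K(\mathcal{L}))}$, whence uniqueness and the dimension formula. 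The paper's route buys brevity at the price of quoting a theorem that only covers $k=\overline{k}$, which is exactly why it needs the extra descent lemma; your route is self-contained, never leaves the ground field (so no descent is needed), and yields strictly more: the weight $1$ category is semisimple with a single simple object, and irreducible already implies absolutely irreducible. The step you flag as the main obstacle---setting up $\mathcal{R}$ and the equivalence when $K(\mathcal{L})$ is non-reduced---is indeed the technical heart, but it is standard and unproblematic: $\Pic$ of the finite $k$-scheme $K(\mathcal{L})$ is trivial, so $\mathcal{G}(\mathcal{L})\to K(\mathcal{L})$ is a trivial $\mathbb{G}_m$-torsor, the weight $1$ graded piece of the coordinate ring of $\mathcal{G}(\mathcal{L})$ is then a subcoalgebra of dimension $\sharp(K(\mathcal{L}))$, and $\mathcal{R}$ is its linear dual. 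One attribution to correct: the non-degeneracy of $e^{\mathcal{L}}$ is not what this construction needs (the extension structure alone suffices); where ampleness and non-degeneracy genuinely enter is in the quoted Theorem \ref{ThetaRep}, which both you and the paper take as given.
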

\begin{proof}
The existence of such a representation follows from Theorem \ref{ThetaRep}. The uniqueness is proven in \cite[theorem 8.32]{vdGMooAV} in the case where $k$ is algebraically closed. We claim that the general case can be deduced using descent. \footnote{Alternatively it is not difficult to see that the cited proof goes through in the general case if we replace irreducible with absolutely irreducible everywhere.} Indeed suppose $V,W$ are two absolutely irreducible $\mathcal{G}(\mathcal{L})$-modules of weight $1$. Then $V\otimes \overline{k} ,W\otimes \overline{k}$ are irreducible $\mathcal{G}(\mathcal{L})_{\overline{k}}$-modules of weight $1$. By the quoted theorem in the case where the ground field is algebraically closed $V\otimes \overline{k}$ and $W\otimes \overline{k}$ are isomorphic. The next Lemma \ref{repdecent} implies that $V$ and $W$ are isomorphic as $\mathcal{G}(\mathcal{L})$-modules. This proves the claim. 
\end{proof}
\begin{lem}\label{repdecent} Let $k$ be a field and $G$ an affine group scheme over $k$ and $V,W$ be two irreducible $G$-modules. Suppose that $V\otimes \overline{k}$ and $W\otimes \overline{k}$ are isomorphic as $G_{\overline{k}}$-modules. Then $V$ and $W$ are isomorphic as $G$-modules.
\end{lem}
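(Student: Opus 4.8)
The plan is to reduce the statement to Schur's lemma over $k$ combined with flat base change for spaces of equivariant homomorphisms. The key object is the $k$-vector space $\Hom_G(V,W)$ of $G$-equivariant linear maps. I will show that it is nonzero and then argue that any nonzero element is automatically an isomorphism.

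First I would establish the base-change formula
$$\Hom_G(V,W)\otimes_k \overline{k}\cong \Hom_{G_{\overline{k}}}(V\otimes_k\overline{k},\, W\otimes_k \overline{k}).$$
Writing $G=\Spec A$ with Hopf algebra $A$, the $G$-modules $V,W$ are $A$-comodules, and $\Hom_k(V,W)$ carries a natural $G$-module structure whose invariants are exactly $\Hom_G(V,W)$. This invariant subspace is the kernel of the $k$-linear map $\Hom_k(V,W)\to \Hom_k(V,W)\otimes_k A$ given by the difference of the coaction and the trivial coaction. Since $\overline{k}$ is flat over $k$, kernels are preserved by $-\otimes_k\overline{k}$, and since $V$ is finite dimensional one has $\Hom_k(V,W)\otimes_k\overline{k}\cong \Hom_{\overline{k}}(V_{\overline{k}}, W_{\overline{k}})$ compatibly with the coactions. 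Combining these two facts yields the displayed isomorphism.

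Next I would invoke the hypothesis: since $V_{\overline{k}}$ and $W_{\overline{k}}$ are isomorphic $G_{\overline{k}}$-modules, the right-hand side contains an isomorphism and is in particular nonzero. Hence $\Hom_G(V,W)\otimes_k\overline{k}\neq 0$, so $\Hom_G(V,W)\neq 0$. Picking any nonzero $f\in\Hom_G(V,W)$, the irreducibility of $V$ and $W$ forces $\ker f$ and $\im f$ to be trivial or everything; the nonvanishing of $f$ then gives $\ker f=0$ and $\im f=W$, so $f$ is an isomorphism of $G$-modules. This is precisely Schur's lemma in the form valid over an arbitrary field.

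The only genuinely delicate point is the base-change isomorphism for the $\Hom$ spaces: it requires $V$ to be finite dimensional and uses flatness of $\overline{k}/k$ to commute the formation of $G$-invariants with extension of scalars, while everything else is formal. I would remark that finite dimensionality is automatic in our application, where $V$ and $W$ are the weight $1$ theta representations furnished by Theorem \ref{ThetaRep}.
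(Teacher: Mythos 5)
Your proposal is correct and follows essentially the same route as the paper: both describe $\Hom_G(V,W)$ as the equalizer (kernel of the difference) of the two coaction-induced maps on $\Hom_k(V,W)$, use flatness of $\overline{k}/k$ to deduce the base-change formula $\Hom_{G_{\overline{k}}}(V\otimes\overline{k},W\otimes\overline{k})\cong\Hom_G(V,W)\otimes\overline{k}$, conclude that $\Hom_G(V,W)\neq\{0\}$, and finish with Schur's lemma. Your explicit attention to the finite-dimensionality hypothesis is a welcome refinement but does not change the argument.
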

\begin{proof}
Consider the $k$-vector space $\Hom_G(V,W)$ of $G$-equivariant $k$-linear maps. By Schur's lemma it suffices to show that $\Hom_G(V,W)\neq \{0\}$.
\par Indeed $\Hom_G(V,W)$ equals the vector space of $k$-linear maps $\varphi:V\rightarrow W$ such that the following diagram commutes:
$$\begin{tikzcd}
V \arrow{r}{\varphi}\arrow{d} & W\arrow{d}\\
V\otimes A_G \arrow{r}{\varphi\otimes \id}& W\otimes A_G
\end{tikzcd}$$
where $A_G$ denotes the Hopf-algebra of $G$ and the vertical arrows are induced from the action of $G$ on $V$ (resp. $W$).
\par This description of $\Hom_G(V,W)$ implies that for any field extension $k'/k$ one has 
$$\Hom_{G_{k'}}(V\otimes k', W\otimes k')=\Hom_G(V,W)\otimes k'\,.$$
Since by assumption $V\otimes \overline{k}$ and $W\otimes \overline{k}$ are isomorphic as $G_{\overline{k}}$-modules we obtain $\Hom_{G_{\overline{k}}}(V\otimes \overline{k}, W\otimes \overline{k})\neq \{0\}$. Therefore $\Hom_G(V,W)\neq \{0\}$ and the lemma follows.
\end{proof}
The representation of the theta group can be used to characterize the image of the global sections under pullback.
\begin{prop}\label{ThetaRepPull}
Let $\pi:A\rightarrow B$ be a finite surjective map of Abelian varieties with kernel $\mathcal{H}$. Let further $\mathcal{M}$ be an ample line bundle on $B$. Then the image of
$$\pi^*: \HH^0(B, \mathcal{M}) \rightarrow \HH^0(A, \pi^* \mathcal{M})$$
is $\HH^0(A, \pi^* \mathcal{M})^{\tilde{\mathcal{H}}}$ where $\tilde{\mathcal{H}}\subset \mathcal{G}(\pi^* \mathcal{M})$ is the level subgroup over $\mathcal{H}$ coming from Proposition \ref{LevelPull}.
\end{prop}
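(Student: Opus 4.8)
The plan is to translate the statement entirely into the language of equivariant descent along the finite faithfully flat quotient $\pi\colon A\rightarrow B=A/\mathcal{H}$ and then pass to global sections. Write $\mathcal{L}=\pi^*\mathcal{M}$. Since $\pi\circ t_h=\pi$ for every $T$-point $h$ of $\mathcal{H}=\ker(\pi)$, one has $t_h^*\mathcal{L}\cong\mathcal{L}$, so $\mathcal{H}\subset K(\mathcal{L})$, and by Proposition \ref{LevelPull} the level subgroup $\tilde{\mathcal{H}}$ (the image of the splitting $s\colon\mathcal{H}\rightarrow\mathcal{G}(\mathcal{L})$) is exactly the datum corresponding to the descended bundle $\mathcal{M}$. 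First I would recall, as in the proof of Proposition \ref{LevelPullDiv}, that such a splitting is the same thing as a lift of the translation action of $\mathcal{H}$ to $\mathcal{L}$, i.e. an isomorphism $\lambda\colon\pr_2^*\mathcal{L}\xrightarrow{\sim}m^*\mathcal{L}$ on $\mathcal{H}\times A$ satisfying the cocycle condition, and that by \cite[p.\ 104, Theorem 1]{MumAV} the descended bundle is the invariant subsheaf $\mathcal{M}=(\pi_*\mathcal{L})^{\mathcal{H}}\subseteq\pi_*\mathcal{L}$.

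Granting this, the next step is to pass to global sections. Because $\pi$ is finite, hence affine, the canonical map $\HH^0(B,\pi_*\mathcal{L})\rightarrow\HH^0(A,\mathcal{L})$ is an isomorphism. Moreover, taking $\mathcal{H}$-invariants is the equalizer of the coaction and the structural map $\mathcal{F}\rightarrow\mathcal{F}\otimes_k A_{\mathcal{H}}$, hence a kernel; since $A_{\mathcal{H}}$ is finite over $k$ and $\HH^0(B,-)$ is left exact, the invariants commute with global sections. Therefore
$$\HH^0(B,\mathcal{M})=\HH^0\bigl(B,(\pi_*\mathcal{L})^{\mathcal{H}}\bigr)=\HH^0(A,\mathcal{L})^{\mathcal{H}},$$
the invariants on the right being taken for the linearization $\lambda$. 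As $\pi$ is faithfully flat, $\mathcal{M}\hookrightarrow\pi_*\mathcal{L}$ is injective, $\pi^*$ is injective on global sections, and this displayed chain identifies the image of $\pi^*$ with the space of $\lambda$-invariant sections.

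It then remains to match the two group actions on $\HH^0(A,\mathcal{L})$: the $\mathcal{H}$-action induced by $\lambda$, and the restriction to $\tilde{\mathcal{H}}$ of the natural weight-one representation of $\mathcal{G}(\mathcal{L})$ from Theorem \ref{ThetaRep}. Unwinding definitions on $T$-points, an element $s(h)=(h,\phi_h)\in\tilde{\mathcal{H}}(T)$ acts on a section $\sigma$ by $\sigma\mapsto t_{-h}^*\phi_h(\sigma)$, so invariance reads $\phi_h(\sigma)=t_h^*\sigma$ for all $h$. On the slice $\{h\}\times A$ the descent datum $\lambda$ restricts to precisely $\phi_h\colon\mathcal{L}\rightarrow t_h^*\mathcal{L}$, and $\lambda(\pr_2^*\sigma)=m^*\sigma$ reads there as $\phi_h(\sigma)=t_h^*\sigma$ as well. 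Hence $\HH^0(A,\mathcal{L})^{\tilde{\mathcal{H}}}=\HH^0(A,\mathcal{L})^{\mathcal{H}}$, and combining with the previous paragraph yields $\im(\pi^*)=\HH^0(A,\mathcal{L})^{\tilde{\mathcal{H}}}$.

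I expect the main obstacle to be exactly this last compatibility: verifying that the level subgroup $\tilde{\mathcal{H}}$ furnished abstractly by Proposition \ref{LevelPull} induces, through the theta-group representation, the same $\mathcal{H}$-action on $\HH^0(A,\mathcal{L})$ as the concrete linearization $\lambda$ entering Mumford's descent — in particular getting the translation and sign conventions in the action formula $\sigma\mapsto t_{-h}^*\phi_h(\sigma)$ to line up with $\lambda(\pr_2^*\sigma)=m^*\sigma$. Once the splitting, the linearization, and the representation are all spelled out on $T$-points this is a formal check, but it is the step where the two formalisms (theta groups versus equivariant descent) must be reconciled, and it is the technical heart of the proof.
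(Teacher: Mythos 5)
Your proof is correct and takes essentially the same route as the paper: the paper's proof is itself only a citation, saying the result follows from the proof of Proposition \ref{LevelPull} (the correspondence splitting $\leftrightarrow$ linearization $\lambda$ $\leftrightarrow$ descended bundle) combined with Mumford's descent theorem (p.\ 104, Theorem 1 of \emph{Abelian Varieties}, i.e.\ $\mathcal{M}=(\pi_*\mathcal{L})^{\mathcal{H}}$), which are exactly the two ingredients you assemble. Your write-up merely supplies the details the paper leaves implicit (left-exactness of global sections, and the $T$-point check that the action of the level subgroup $\tilde{\mathcal{H}}$ agrees with the $\lambda$-invariance condition), so it is a fleshed-out version of the paper's argument rather than a different one.
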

\begin{proof}
Mumford uses this without proof in \cite[§1, Theorem 4]{EqDefAV}. The result follows from the proof of Proposition \ref{LevelPull}, which we do not discuss, and \cite[p. 104, Theorem 1]{MumAV}.
\end{proof}
We will also need a result about the action of the level group when we have a situation as in Proposition \ref{LevelPullDiv}.
\begin{prop}\label{ThetaRepPullDiv}
Let $A/k$ be an Abelian variety and $\mathcal{H}\subset A$ be a finite subgroup scheme. Suppose an ample effective divisor $D\subset A$ is $\mathcal{H}$-invariant.
\par Then the action of the level group $\tilde{\mathcal{H}}\subset \mathcal{G}(\mathcal{O}(D))$ coming from Proposition \ref{LevelPullDiv} on $\HH^0(A, \mathcal{O}(D))$ is given by the translation action of $\mathcal{H}$ on the function field $K(A)$.
\end{prop}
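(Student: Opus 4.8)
The plan is to make everything explicit through the canonical section of $\mathcal{O}(D)$. Write $\mathbf{1}_D$ for the rational section of $\mathcal{L}=\mathcal{O}(D)$ given by the constant function $1$; since $D$ is effective it is in fact a global section with zero divisor exactly $D$, and it identifies $\HH^0(A,\mathcal{O}(D))$ with the Riemann--Roch space $L(D)=\{f\in K(A)\mid \operatorname{div}(f)+D\ge 0\}$ via $s\mapsto f=s/\mathbf{1}_D$. First I would recall the action of $\mathcal{G}(\mathcal{L})$ on $\HH^0(A,\mathcal{L})$: an element $(x,\phi)$ with $\phi:\mathcal{L}\xrightarrow{\sim}t_x^*\mathcal{L}$ sends a section $s$ to $t_{-x}^*(\phi\circ s)$, which again lies in $\HH^0(A,\mathcal{L})$ because $t_{-x}^*t_x^*\mathcal{L}=\mathcal{L}$.

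Next I would pin down the isomorphism attached to the splitting $\mathcal{H}\to\mathcal{G}(\mathcal{L})$ of Proposition \ref{LevelPullDiv}. By construction that splitting sends $h$ to $(h,\phi_h)$, where $\phi_h$ is the restriction to $\{h\}\times A$ of the isomorphism $\lambda:\pr_2^*\mathcal{O}(D)\xrightarrow{\sim}m^*\mathcal{O}(D)$ coming from the equality of divisors $\pr_2^{-1}(D)=m^{-1}(D)$. The key observation is that $\lambda$ is the unique isomorphism matching the two canonical sections: it sends $\pr_2^*\mathbf{1}_D$ to $m^*\mathbf{1}_D$, since both are the canonical section of their respective line bundle and have the same zero divisor $\pr_2^{-1}(D)=m^{-1}(D)$. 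Restricting to $\{h\}\times A$, where $\pr_2$ becomes $\id_A$ and $m$ becomes $t_h$, this says $\phi_h(\mathbf{1}_D)=t_h^*\mathbf{1}_D$.

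With this in hand the computation is immediate. For $s=f\cdot\mathbf{1}_D$ with $f\in L(D)$, $\mathcal{O}_A$-linearity of $\phi_h$ gives $\phi_h\circ s = f\cdot t_h^*\mathbf{1}_D$, and then $t_{-h}^*(\phi_h\circ s) = (t_{-h}^*f)\cdot t_{-h}^*t_h^*\mathbf{1}_D = (t_{-h}^*f)\cdot\mathbf{1}_D$. Thus, under the identification $s\leftrightarrow f$, the splitting acts by $f\mapsto t_{-h}^*f=f\circ t_{-h}$, which is exactly the translation action of $\mathcal{H}$ on $K(A)$ (the precise sign being fixed by the conventions for the $\mathcal{G}(\mathcal{L})$-action and for the splitting).

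The main obstacle is that $\mathcal{H}$ is typically non-reduced (for us of $\alpha_p$-type), so this pointwise reasoning must be made functorial rather than read off geometric points $h$. I would therefore run the argument over the universal point $\iota\in\mathcal{H}(\mathcal{H})$: the splitting sends $\iota$ to the pair $(\iota,\lambda)\in\mathcal{G}(\mathcal{L})(\mathcal{H})$, and the resulting action on $V=\HH^0(A,\mathcal{L})$ is the comodule map $V\to V\otimes_k A_\mathcal{H}=\HH^0(A\times\mathcal{H},\mathcal{L}_\mathcal{H})$ obtained from $U_{(\iota,\lambda)}$. The same manipulation of $\mathbf{1}_D$, now with $\pr_2$ and $m$ on $\mathcal{H}\times A$, yields precomposition with the universal translation, i.e. the translation action. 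The only points needing genuine care are the compatibility of $\lambda$ with the canonical sections and the flatness facts already established in the proof of Proposition \ref{LevelPullDiv}, which give $\pr_2^*\mathcal{O}(D)=\mathcal{O}(\pr_2^{-1}(D))$ and $m^*\mathcal{O}(D)=\mathcal{O}(m^{-1}(D))$ and thereby guarantee that $\lambda$ really is the multiplication identifying $\pr_2^*\mathbf{1}_D$ with $m^*\mathbf{1}_D$.
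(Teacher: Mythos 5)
Your proof is correct and takes essentially the same route as the paper's: your key observation that $\lambda$ matches the canonical sections $\pr_2^*\mathbf{1}_D$ and $m^*\mathbf{1}_D$ is exactly the paper's statement that the inclusion $\mathcal{O}_A\hookrightarrow\mathcal{O}(D)$ (i.e.\ multiplication by $\mathbf{1}_D$) is equivariant for the lifted action, and dividing by $\mathbf{1}_D$ to land in $K(A)\otimes_k A_{\mathcal{H}}$ is the paper's passage to stalks at the generic point. The only difference is one of explicitness: by working with the universal point $(\iota,\lambda)\in\mathcal{G}(\mathcal{L})(\mathcal{H})$ you spell out the identification of the sheaf-level action with the restriction of the theta representation to $\tilde{\mathcal{H}}$, which the paper dispatches with ``one can check using the definitions''.
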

\begin{proof}
From the proof of Proposition \ref{LevelPullDiv} we get a lift of the translation action of $\mathcal{H}$ to the sheaf $\mathcal{O}(D)$. Furthermore we have seen there that the inclusion $\mathcal{O} \rightarrow \mathcal{O}(D)$ is equivariant for this action. By taking the stalk at the generic point of $\mathcal{O} \rightarrow \mathcal{O}(D)$ we see that the action on global sections induced by the action of $\mathcal{H}$ on the sheaf $\mathcal{O}(D)$ is given by translation of rational functions. On the other hand one can check using the definitions that the action of $\mathcal{H}$ on global sections induced from the action on the sheaf $\mathcal{O}(D)$ coincides with the restriction of the representation of the theta group to the level group $\tilde{\mathcal{H}}$.
\end{proof}
\section{The Construction of Moret-Bailly}\label{main}
Let $k$ be an algebraically closed field of characteristic $p>2$. Fix a supersingular elliptic curve $E/k$ defined over $\mathbb{F}_p$ such that the relative Frobenius satisfies $F^2+p=0$. All schemes in this section will be $k$-schemes to allow ourselves to apply theorems for schemes over algebraically closed fields. However as we see later everything will be defined over $\mathbb{F}_{p^2}$.
\subsection{Notation}
We shall fix the following notation for the rest of the article.
\begin{itemize}
\item $E$ is a supersingular elliptic curve defined over $\mathbb{F}_p$ such that the relative Frobenius satisfies $F^2+p=0$.
\item $\mathsf{O}=\text{End}(E)$ and $B=\mathsf{O}\otimes \mathbb{Q}$. Notice that the conditions on $E$ imply that all the endomorphism of $E$ are defined over $\mathbb{F}_{p^2}$.
\item There is an additive bijection
$$\gamma: \text{Mat}_{m,n}(\mathsf{O})\stackrel{\sim}{\rightarrow}\text{Hom}(E^n,E^m) $$
$$ \Phi=(\varphi_{ij})\mapsto \left(\psi:E^n\rightarrow E^m,\, (P_1,\ldots, P_n)\mapsto \left(\sum_{j=1}^n \varphi_{1j}(P_j)), \ldots, \sum_{j=1}^n \varphi_{mj}(P_j)\right)\right)\,.$$
This bijection turns matrix multiplication into composition of maps. Notice that we use the convention that maps should be composed from right to left.
\item $\mu: E^2\rightarrow (E^2)^\vee$ denotes the natural product polarization.
\item The map
$$f\mapsto (\mu\circ \gamma(f):E^2\rightarrow (E^2)^\vee)$$
is a bijection from the set of positive definite hermitian matrices in $M_{2,2}(\mathsf{O})$ to the set of polarizations on $E^2$.
\item Let $A$ be an Abelian variety. By an elliptic curve $E_1\subset A$ we mean a closed subscheme which is a smooth curve of genus 1 with a distinguished rational point, but not necessarily the origin. Thus $E_1$ does not have to be an Abelian subvariety.
\end{itemize}
\subsection{Results of Moret-Bailly}\label{SectionResultsMB}
In this section we recall the construction of the families of Moret-Bailly. All the results in this section are taken from \cite{Moret-Bailly}. Thus they are just stated here without proof for the convenience of the reader and to fix the notation.
\par Consider the $\alpha$-group scheme $\alpha(\mathcal{O}(1))$ over $\mathbb{P}^1$ (see Definition \ref{alphagroups} for the notation). The map $\mathcal{O}^2\rightarrow \mathcal{O}(1)$ induces a map of $\mathbb{P}^1$-group schemes $\alpha(\mathcal{O}(1))\rightarrow \alpha_{p, \mathbb{P}^1}^2$. After choosing an isomorphism $E[F]\cong \alpha_p$ there is a composed map $\iota: \alpha(\mathcal{O}(1))\rightarrow E^2_{\mathbb{P}^1}$. The map will depend on the choice of the isomorphism $E[F]\cong \alpha_p$, but its image will not. By \cite[expos\'e 5 Th\'eor\`eme 4.1]{SGA3} the quotient $E^2_{\mathbb{P}^1}/\iota(\alpha(\mathcal{O}(1)))$ exists. It is an Abelian scheme over $\mathbb{P}^1$.
\begin{definition} We define $\mathcal{A}=E^2_{\mathbb{P}^1}/\iota(\alpha(\mathcal{O}(1)))$. 
\end{definition}
Moret-Bailly proved that one can put a principal polarization on $\mathcal{A}$:
\begin{lem}\label{MB} Let $\eta$ be a polarization on $E^2$ such that $\ker(\eta)=E^2[F]$. Then $\eta_{\mathbb{P}^1}$ descends to a principal polarization $\lambda$ on the Abelian scheme $\mathcal{A}\rightarrow \mathbb{P}^1$.
\end{lem}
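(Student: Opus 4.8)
The plan is to realize $\eta$ as the polarization $\lambda_\mathcal{L}$ of an ample line bundle $\mathcal{L}$ on $E^2$, to descend $\mathcal{L}_{\mathbb{P}^1}$ along the quotient map $\pi\colon E^2_{\mathbb{P}^1}\to\mathcal{A}$, and to read off $\lambda$ from the descended bundle. Choose an ample $\mathcal{L}$ with $\lambda_\mathcal{L}=\eta$; then $K(\mathcal{L})=\ker(\eta)=E^2[F]$ has rank $p^2$, and from $\deg(\lambda_\mathcal{L})=\chi(\mathcal{L})^2=\sharp K(\mathcal{L})=p^2$ we get $\chi(\mathcal{L})=p$. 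The kernel $\mathcal{K}=\iota(\alpha(\mathcal{O}(1)))$ of $\pi$ is an $\alpha$-group scheme of rank $p$ contained in $E^2[F]_{\mathbb{P}^1}=K(\mathcal{L}_{\mathbb{P}^1})$, so we are exactly in the descent situation of Proposition~\ref{LevelPull}, now relative to the base $\mathbb{P}^1$.

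First I would carry out the descent fiberwise. Over a point $s\in\mathbb{P}^1$ the fiber $\mathcal{K}_s\cong\alpha_p$ is infinitesimal, hence unipotent, and the residue field is perfect; so the restricted theta-group extension $0\to\mathbb{G}_m\to\mathcal{G}(\mathcal{L}_s)|_{\mathcal{K}_s}\to\mathcal{K}_s\to 0$ of Lemma~\ref{ThetaExt} splits uniquely by Theorem~\ref{ext}. By the corollary following Proposition~\ref{LevelPull} this unique splitting yields a unique line bundle $\mathcal{M}_s$ on $\mathcal{A}_s=(E^2)_s/\mathcal{K}_s$ with $\pi_s^*\mathcal{M}_s\cong\mathcal{L}_s$; here the required isotropy of $\mathcal{K}_s$ for the commutator pairing is automatic, precisely because that pairing takes values in $\mathbb{G}_m$ while $\mathcal{K}_s$ is unipotent, so descent is unobstructed in this inseparable situation.

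The genuine work is to upgrade this to the family over $\mathbb{P}^1$, and this is the step I expect to be the main obstacle, since Theorem~\ref{ext} and the descent corollary are phrased over a field. The key observation is that the splittings of the relative extension $0\to\mathbb{G}_{m,\mathbb{P}^1}\to\mathcal{G}(\mathcal{L}_{\mathbb{P}^1})|_{\mathcal{K}}\to\mathcal{K}\to 0$ form a pseudo-torsor under $\underline{\Hom}_{\mathbb{P}^1}(\mathcal{K},\mathbb{G}_m)$, which vanishes because $\mathcal{K}$ is unipotent and $\mathbb{G}_m$ diagonalizable. Hence a relative splitting is unique if it exists, and this uniqueness allows one to glue the fiberwise splittings of the previous paragraph into a global one (equivalently, one invokes the relative form of the unipotent splitting theorem). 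The resulting line bundle $\mathcal{M}$ on $\mathcal{A}$ then satisfies $\pi^*\mathcal{M}\cong\mathcal{L}_{\mathbb{P}^1}\otimes q^*N$ for some $N\in\Pic(\mathbb{P}^1)$, where $q\colon E^2_{\mathbb{P}^1}\to\mathbb{P}^1$; such a twist pulled back from the base does not affect the induced relative polarization.

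Finally I would set $\lambda=\lambda_{\mathcal{M}}\colon\mathcal{A}\to\mathcal{A}^\vee$, which is a polarization since $\mathcal{M}$ is fiberwise ample. Functoriality of $\mathcal{L}\mapsto\lambda_{\mathcal{L}}$ applied to $\pi^*\mathcal{M}\cong\mathcal{L}_{\mathbb{P}^1}$ (up to the base twist, which is harmless) gives $\pi^\vee\circ\lambda\circ\pi=\lambda_{\mathcal{L}_{\mathbb{P}^1}}=\eta_{\mathbb{P}^1}$, exhibiting $\lambda$ as the descent of $\eta_{\mathbb{P}^1}$. Taking relative degrees in this identity yields $p^2=\deg(\pi^\vee)\deg(\lambda)\deg(\pi)=p^2\deg(\lambda)$, so $\deg(\lambda)=1$ and $\lambda\colon\mathcal{A}\to\mathcal{A}^\vee$ is an isomorphism, that is, a principal polarization.
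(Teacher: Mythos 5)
The paper offers no proof of this lemma at all (it defers to Moret-Bailly, \S 1.4), so the only question is whether your argument stands on its own. It does not: the gap sits exactly at the step you yourself flagged as the main obstacle, the passage from fiberwise splittings to a splitting of the theta-group extension over $\mathbb{P}^1$. Three things go wrong there. First, the sheaf $\underline{\Hom}_{\mathbb{P}^1}(\mathcal{K},\mathbb{G}_m)$ does \emph{not} vanish: it is the Cartier dual $\mathcal{K}^D$, a non-trivial $\alpha$-group scheme of rank $p$ (with $\mathcal{K}\cong\alpha(\mathcal{O}(1))$ one gets $\mathcal{K}^D\cong\alpha(\mathcal{O}(-1))$). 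What is true is that its group of \emph{sections} over any reduced base is trivial; that gives uniqueness of relative splittings, but uniqueness contributes nothing to existence. Second, splittings over the residue fields of closed points are not local data on $\mathbb{P}^1$, so there is nothing to glue; the honest obstruction (even granting commutativity of the restricted extension) is a class in $\Ext^1_{\mathbb{P}^1}(\mathcal{K},\mathbb{G}_m)$, which contains $\HH^1_{\mathrm{fppf}}(\mathbb{P}^1,\mathcal{K}^D)$, and this group is non-zero: realizing $\mathcal{K}^D$ as the kernel of the relative Frobenius of the vector group with sheaf of sections $\mathcal{O}(1)$, one computes
\begin{equation*}
\HH^1_{\mathrm{fppf}}(\mathbb{P}^1,\mathcal{K}^D)\;\cong\;\mathrm{coker}\Bigl(\HH^0(\mathbb{P}^1,\mathcal{O}(1))\xrightarrow{\;x\mapsto x^p\;}\HH^0(\mathbb{P}^1,\mathcal{O}(p))\Bigr),
\end{equation*}
a space of dimension $p-1>0$. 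Since every class in this group restricts to $0$ on each closed fiber (the fiberwise group is $\HH^1_{\mathrm{fppf}}(k,\alpha_p)=0$ for $k$ algebraically closed), there exist extensions of $\mathcal{K}$ by $\mathbb{G}_{m,\mathbb{P}^1}$ that split on every closed fiber yet are globally non-split; the implication ``fiberwise split $+$ uniqueness $\Rightarrow$ split'' that your proof needs is simply false. Third, the ``relative form of the unipotent splitting theorem'' you invoke as an alternative does not exist: the generic point of $\mathbb{P}^1$ has the non-perfect residue field $k(t)$, and, as the remark following Theorem \ref{ext} in this paper points out, over a non-perfect field there are non-split extensions of $\alpha_p$ by $\mathbb{G}_m$.

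There is also a smaller error upstream: the isotropy of $\mathcal{K}_s$ is not ``automatic because $\mathcal{K}_s$ is unipotent and the pairing takes values in $\mathbb{G}_m$''. That principle kills \emph{homomorphisms} $\mathcal{K}_s\to\mathbb{G}_m$ (these form $\mathcal{K}_s^D(k)=0$), not bilinear pairings: a pairing is a homomorphism $\mathcal{K}_s\to\mathcal{K}_s^D$, and $\Hom(\alpha_p,\alpha_p^D)\cong k\neq 0$ --- the Cartier duality pairing is a non-trivial pairing between unipotent groups. Isotropy holds here because the commutator pairing is \emph{alternating}, and every alternating bilinear pairing on $\alpha_p$ is trivial when $p>2$. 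This corrected observation is in fact the key to a working proof, provided you descend the symmetric homomorphism rather than the line bundle: since $\mathcal{K}\subset\ker(\eta_{\mathbb{P}^1})$, write $\eta_{\mathbb{P}^1}=g\circ\pi$; symmetry gives $\eta_{\mathbb{P}^1}=\pi^\vee\circ g^\vee$, so $g^\vee|_{\mathcal{K}}$ factors through $\ker(\pi^\vee)=\mathcal{K}^D$, i.e.\ defines an element of $\Hom_{\mathbb{P}^1}(\mathcal{K},\mathcal{K}^D)\cong\Hom_{\mathcal{O}_{\mathbb{P}^1}}(\mathcal{O}(-1),\mathcal{O}(1))=\HH^0(\mathbb{P}^1,\mathcal{O}(2))$ by Theorem \ref{AlphaThm}. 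Its restriction to each closed fiber vanishes, because over the algebraically closed residue field the polarization does descend (your fiberwise argument, with isotropy justified as above, together with Proposition \ref{LevelPull}); and a section of $\mathcal{O}(2)$ vanishing at every closed point is zero. So for \emph{this} obstruction --- unlike for the splitting obstruction --- fiberwise vanishing does imply relative vanishing. Then $g^\vee=\lambda\circ\pi$ for some $\lambda:\mathcal{A}\to\mathcal{A}^\vee$, hence $\eta_{\mathbb{P}^1}=\pi^\vee\circ\lambda\circ\pi$, and your concluding degree count and fiberwise-ampleness remarks show $\lambda$ is a principal polarization.
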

\begin{proof}
For the proof we refer to \cite[§1.4]{Moret-Bailly}.
\end{proof}
The next proposition explains the properties of the polarization $\lambda$ on the Abelian scheme $\mathcal{A}\rightarrow \mathbb{P}^1$:
\begin{thm}\label{MBPol} There exists a symmetric divisor $\mathfrak{C}$ on $\mathcal{A}$ that is relatively ample and flat 
over $\mathbb{P}^1$ and induces the polarization $\lambda$. Then we have the following properties:
\begin{itemize}
\item The generic fiber of $\mathfrak{C} \rightarrow \mathbb{P}^1$ is a smooth geometrically connected curve of genus 2.
\item $5p-5$ fibers of $\mathfrak{C} \rightarrow \mathbb{P}^1$ are of the form $E_1\cup E_2$ where $E_1,E_2$ are elliptic curves intersecting transversely in a point contained in $\mathcal{A}[2]$. We are going to refer to these fibers as the reducible fibers. By \cite[formula 3.3]{KatsuraOort} the reducible fibers are fibers over $\mathbb{F}_{p^2}$-valued points of $\mathbb{P}^1$ (necessary but not sufficient condition).
\end{itemize}
Denote by $\mathfrak{D}$ the preimage of $\mathfrak{C}$ under $E^2_{\mathbb{P}^1}\rightarrow \mathcal{A}$. Then $\mathfrak{D}$ is a symmetric divisor on $E^2_{\mathbb{P}^1}$ that is relatively ample and flat over $\mathbb{P}^1$ and induces the polarization $\eta_{\mathbb{P}^1}$. Furthermore we have the following properties:
\begin{itemize}
\item The generic fiber of $\mathfrak{D} \rightarrow \mathbb{P}^1$ is a singular geometrically integral curve of geometric genus 2 and arithmetic genus $p+1$.
\item $5p-5$ fibers of $\mathfrak{D} \rightarrow \mathbb{P}^1$ are of the form $E_1\cup E_2$ where $E_1,E_2$ are elliptic curves intersecting with multiplicity $p$ in one of $10$ points contained in $E^2[2]$.
\end{itemize}
\end{thm}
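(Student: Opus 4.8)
The plan is to produce $\mathfrak{C}$ as a relative theta divisor for the principal polarization $\lambda$, and then to obtain everything about $\mathfrak{D}$ by pulling back along $\pi\colon E^2_{\mathbb{P}^1}\to\mathcal{A}$. Write $q\colon\mathcal{A}\to\mathbb{P}^1$ for the structure morphism. Since $\lambda$ is principal I would first choose a \emph{symmetric} line bundle $\mathcal{L}$ on $\mathcal{A}$ with $\lambda_\mathcal{L}=\lambda$; such an $\mathcal{L}$ exists and is unique up to translation by a section of $\mathcal{A}[2]$ and up to tensoring by a bundle pulled back from $\mathbb{P}^1$. On every fiber $\mathcal{A}_t$ the restriction $\mathcal{L}_t$ is a symmetric bundle inducing the principal polarization $\lambda_t$, hence is effective with $h^0(\mathcal{A}_t,\mathcal{L}_t)=1$ (the $16$ symmetric bundles inducing a principal polarization are exactly the translates of the theta divisor by $\mathcal{A}[2]$). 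As $h^0$ is constant, cohomology and base change make $q_*\mathcal{L}$ invertible on $\mathbb{P}^1$; replacing $\mathcal{L}$ by $\mathcal{L}\otimes q^*\mathcal{O}_{\mathbb{P}^1}(-d)$ for suitable $d$ (which preserves symmetry, since $[-1]$ acts trivially on bundles from the base) I may assume $q_*\mathcal{L}\cong\mathcal{O}_{\mathbb{P}^1}$. A nowhere-vanishing section then cuts out an effective divisor $\mathfrak{C}$ meeting each fiber in its theta divisor; it is relatively ample because $\mathcal{L}_t$ is fiberwise ample, flat over $\mathbb{P}^1$ because $\mathfrak{C}_t\subsetneq\mathcal{A}_t$ contains no fiber, and symmetric because uniqueness of the effective divisor ($h^0=1$) forces $[-1]^*\mathfrak{C}=\mathfrak{C}$.

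Next I would set $\mathfrak{D}=\pi^*\mathfrak{C}$. It is automatically symmetric, relatively ample and flat, and it induces $\lambda_{\pi^*\mathcal{L}}=\pi^\vee\circ\lambda\circ\pi=\eta_{\mathbb{P}^1}$, the last equality being exactly the descent of $\eta_{\mathbb{P}^1}$ to $\lambda$ in Lemma \ref{MB}. For the genera I would work on the fixed surface $E^2$ (the generic fiber of $E^2_{\mathbb{P}^1}\to\mathbb{P}^1$ is constant): the divisor $\mathfrak{D}_t$ lies in the linear system of the bundle $\mathcal{N}$ with $\lambda_\mathcal{N}=\eta$, so $\chi(\mathcal{N})=p$ and $\mathfrak{D}_t^2=2\chi(\mathcal{N})=2p$; adjunction on $E^2$ (where $K=0$) gives $2p_a(\mathfrak{D}_t)-2=\mathfrak{D}_t^2=2p$, i.e.\ $p_a=p+1$. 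The restriction $\pi|_{\mathfrak{D}_t}\colon\mathfrak{D}_t\to\mathfrak{C}_t$ is purely inseparable of degree $p$, because $\pi$ is the quotient by the infinitesimal group $\alpha_p$; since purely inseparable morphisms induce Frobenius twists on normalizations and Frobenius preserves genus, the geometric genus of $\mathfrak{D}_t$ equals that of the smooth curve $\mathfrak{C}_t$, namely $2$.

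The remaining point is the dichotomy between smooth and reducible fibers. The generic fiber $(\mathcal{A}_\eta,\lambda_\eta)$ is a principally polarized abelian surface, hence by the dimension-$2$ classification either the Jacobian of a smooth genus-$2$ curve (indecomposable) or a polarized product of elliptic curves (decomposable); the theta divisor is a smooth geometrically connected genus-$2$ curve in the first case and $E_1\cup E_2$ in the second. I would argue that $E^2/\alpha_p^{(t)}$ is decomposable only for finitely many $t$, in fact for exactly $5p-5$ of them, so that the generic fiber is indecomposable: then $\mathfrak{C}_\eta$ is smooth of genus $2$ and, by the previous paragraph, $\mathfrak{D}_\eta$ is its purely inseparable image, singular, geometrically integral, of geometric genus $2$ and arithmetic genus $p+1$. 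At each of the $5p-5$ decomposable values $\mathfrak{C}_t=E_1\cup E_2$ is the product theta divisor, two elliptic curves meeting transversely at a point of $\mathcal{A}[2]$ (symmetry of $\mathfrak{C}$); pulling back, $\mathfrak{D}_t=\pi^{-1}(\mathfrak{C}_t)$ is the union of the two reduced elliptic curves $\pi^{-1}(E_i)_{\mathrm{red}}\subset E^2$, now meeting with multiplicity $p$ (the transverse point acquires length $p$ under the degree-$p$ inseparable $\pi$) at a point of $E^2[2]$, and the possible intersection points are the $10$ points of Katsura--Oort.

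The elementary ingredients above — the relative theta construction via $q_*\mathcal{L}$, the adjunction computation, the Frobenius-twist genus argument, and the symmetry bookkeeping — are all routine. The genuinely hard part, which I would import wholesale from \cite{KatsuraOort} and \cite{Moret-Bailly}, is the exact count $5p-5$ of decomposable fibers together with the assertion that they are defined over $\mathbb{F}_{p^2}$-points of $\mathbb{P}^1$; this is proved by semilinear algebra on the Dieudonn\'e module (the enumeration behind \cite[formula 3.3]{KatsuraOort}). That count is the main obstacle and is precisely the deep input this section takes as given.
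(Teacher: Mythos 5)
Your proposal should first be measured against what the paper actually does: it does not prove this theorem at all. As announced at the start of Section \ref{SectionResultsMB}, every statement there is quoted from Moret-Bailly, and the ``proof'' is the bare citation \cite[\S 2.1]{Moret-Bailly} (with the $\mathbb{F}_{p^2}$-rationality referred to \cite[formula 3.3]{KatsuraOort}). So your reconstruction is necessarily a different route, and most of it is sound: the cohomology-and-base-change argument producing $\mathfrak{C}$ from a trivialized $q_*\mathcal{L}$, flatness and symmetry via fiberwise uniqueness of the effective theta divisor, the identity $\lambda_{\pi^*\mathcal{L}}=\pi^\vee\circ\lambda\circ\pi=\eta_{\mathbb{P}^1}$ from Lemma \ref{MB}, the adjunction computation $\mathfrak{D}_t^2=2p$ giving $p_a=p+1$, the Frobenius-twist argument for geometric genus $2$ (the same argument the paper itself uses later, citing \cite[Tag 0CCX]{Stacks}, to justify Algorithm \ref{AlgMB}), and the description of reducible fibers and their reduced preimages meeting with multiplicity $p$ (compare Proposition \ref{decomp} and Corollary \ref{RedQuot}, where the paper proves $D=\pi^{-1}(\pi(D))$ in the other direction). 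You also correctly isolate the genuinely deep input --- the count $5p-5$, the ten intersection points, and the $\mathbb{F}_{p^2}$-rationality --- and import it from \cite{KatsuraOort}, which is exactly what the paper does.

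The one step you treat as free that is not free is the very first one: the global existence over $\mathbb{P}^1$ of a symmetric line bundle $\mathcal{L}$ with $\lambda_\mathcal{L}=\lambda$ (equivalently, of a relative symmetric theta divisor). For a principally polarized abelian scheme over a general base such an $\mathcal{L}$ need not exist; this ``theta divisor problem'' is precisely one of the points Moret-Bailly has to address, i.e., it is part of the content of the theorem you are reproving, not a formality. The gap is repairable in the present situation: line bundles inducing $\lambda$ and rigidified along the zero section form a torsor under $\mathcal{A}^\vee$, the symmetric ones among them form a torsor under $\mathcal{A}^\vee[2]$, which is finite \'etale of order $16$ since $p>2$, and every torsor under a finite \'etale group scheme over the simply connected curve $\mathbb{P}^1_k$ ($k$ algebraically closed) is trivial; hence a global symmetric $\mathcal{L}$ exists. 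But as written your proof asserts this existence rather than proving it, and it is the one place where the argument silently relies on a fact of comparable depth to what it claims to establish.
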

\begin{proof}
See \cite[§2.1]{Moret-Bailly}.
\end{proof}
Notice that $\mathfrak{C}$ is not unique but only unique up to translating by a $2$-torsion point.

\subsection{Reducible Fibers I: Classification of reducible fibers}
\begin{prop}\label{decomp}
Let $f\in M_{2,2}(\mathsf{O})$ be a hermitian positive definite matrix inducing a polarization $\eta$ on $E^2$ with kernel $E^2[F]$. Then there exists an ample divisor $D=E_1+E_2$ inducing the polarization $\eta$ where $E_1,E_2$ are elliptic curves intersecting in the origin with multiplicity $p$. To give such a decomposition is equivalent to writing $f=f_1+f_2$ where $f_1,f_2\in M_{2,2}(\mathsf{O})$ are hermitian rank 1 matrices with integral entries.
\par Furthermore if we are given $f_1,f_2$ as above then $E_i=\ker(\gamma(f_i))$. In particular the $E_i$ and $D$ are defined over $\mathbb{F}_{p^2}$.
\end{prop}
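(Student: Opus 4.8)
The plan is to set up a dictionary between geometric decompositions $D=E_1+E_2$ and algebraic decompositions $f=f_1+f_2$ via the additive assignment $\mathcal{L}\mapsto\lambda_{\mathcal{L}}$, and then to read off the intersection multiplicity and the field of definition from this dictionary. First I would record that $\lambda_{\mathcal{O}(E_1+E_2)}=\lambda_{\mathcal{O}(E_1)}+\lambda_{\mathcal{O}(E_2)}$, and that for any line bundle the homomorphism $\lambda_{\mathcal{L}}\colon E^2\to(E^2)^\vee$ is fixed by the Rosati involution, hence of the form $\mu\circ\gamma(f')$ with $f'$ hermitian; since $\lambda_{\mathcal{L}}$ is an actual homomorphism, $f'\in M_{2,2}(\mathsf{O})$ is integral. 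The heart of the dictionary is the claim that rank $1$ positive semidefinite hermitian matrices $f_i$ correspond exactly to elliptic curves $E_i\subseteq E^2$ through the origin, with $E_i=\ker(\gamma(f_i))$ and $\lambda_{\mathcal{O}(E_i)}=\mu\circ\gamma(f_i)$. In the direction $E_i\mapsto f_i$, rank $1$ follows because $E_i$ is stable under translation by itself, so $E_i\subseteq K(\mathcal{O}(E_i))=\ker\lambda_{\mathcal{O}(E_i)}$ forces $\gamma(f_i)$ to vanish on the one-dimensional $E_i$. In the direction $f_i\mapsto E_i$ I would set $E_i=\ker(\gamma(f_i))$, an elliptic curve because $f_i$ has rank $1$, and verify $\lambda_{\mathcal{O}(E_i)}=\mu\circ\gamma(f_i)$ by comparing kernels and the principal polarization induced on the quotient $E^2/E_i\cong E$ (in the primitive case one may take $f_i=v\overline{v}^{\,T}$ with $v\in\mathsf{O}^2$ primitive, making $\gamma(f_i)=\gamma(v)\circ\gamma(\overline{v}^{\,T})$ and $E_i=\ker\gamma(\overline{v}^{\,T})$ transparent).

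With the dictionary in hand the equivalence is immediate: using additivity and the bijection $f\mapsto\mu\circ\gamma(f)$ from the notation, $D=E_1+E_2$ induces $\eta=\mu\circ\gamma(f)$ if and only if $\mu\circ\gamma(f_1)+\mu\circ\gamma(f_2)=\mu\circ\gamma(f)$, i.e. $f=f_1+f_2$; and $D$ is ample precisely when $\eta$ is a polarization, i.e. when $f$ is positive definite, which is the hypothesis.

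For the intersection multiplicity I would argue numerically on the abelian surface $E^2$. Since $\ker\eta=E^2[F]$ has order $p^2$ and $\deg\lambda_{\mathcal{O}(D)}=\chi(\mathcal{O}(D))^2=(D^2/2)^2$, we get $D^2=2p$. As $E_1,E_2$ are abelian subvarieties their self-intersections vanish, so $E_1\cdot E_2=\tfrac12 D^2=p$. Finally $E_1\cap E_2$ is a finite subgroup scheme of order $p$, hence killed by $p$ and contained in $E^2[p]$, which is connected because $E$ is supersingular; thus $E_1\cap E_2\cong\alpha_p$ is supported at the origin and the local intersection multiplicity there equals $p$.

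It remains to produce one decomposition. The quickest route is to invoke Theorem \ref{MBPol}: Moret-Bailly's family already contains a reducible fiber $E_1\cup E_2$ inducing $\eta_{\mathbb{P}^1}$ and meeting in a $2$-torsion point; translating by that point (which does not change $\lambda$) moves the intersection to the origin, and the dictionary then yields $f_1,f_2$. The self-contained alternative is to prove directly that a positive definite hermitian $f$ with $\det f=p$ (the numerical form of $\ker\eta=E^2[F]$) is a sum of two rank $1$ positive semidefinite hermitian integral matrices; this amounts to representing $p$ primitively by the adjugate form $\mathrm{adj}(f)$ over $\mathsf{O}$, and is where the quaternion-lattice machinery of Section~1 (in the spirit of Lemma \ref{PerfPair}, together with a local analysis at $p$ and at the split primes) would be needed. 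I expect this arithmetic existence statement to be the main obstacle, the geometric shortcut sidestepping it at the cost of relying on Moret-Bailly. In either case, because every endomorphism of $E$ is defined over $\mathbb{F}_{p^2}$, the matrix $\gamma(f_i)\in\End(E^2)$, its kernel $E_i$, and hence $D$ are all defined over $\mathbb{F}_{p^2}$.
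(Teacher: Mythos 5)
Your overall strategy is the same as the paper's: set up the dictionary $r\colon \mathrm{NS}(E^2)\to M_{2,2}(\mathsf{O})$, $D\mapsto \mu^{-1}\circ\lambda_{\mathcal{O}(D)}$, onto the Rosati-fixed (hermitian) matrices, get existence of one decomposition from Theorem \ref{MBPol}, and compute $E_1\cdot E_2=p$ numerically. But the heart of the converse direction has a genuine gap. You declare that rank $1$ positive semidefinite hermitian integral matrices ``correspond exactly'' to elliptic curves through the origin, and in particular that $\ker(\gamma(f_i))$ is ``an elliptic curve because $f_i$ has rank $1$.'' This is false: $f_i=\mathrm{diag}(p,0)$ is hermitian, integral, positive semidefinite of rank $1$, yet $\ker(\gamma(f_i))=E[p]\times E$ is not an elliptic curve, and the divisor class $r^{-1}(f_i)$ is $p\cdot(\{0\}\times E)$, i.e.\ $p$ times an elliptic curve. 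The map from elliptic curves through $0$ to rank-$1$ hermitian matrices is injective but far from surjective, and nothing in your argument shows that the $f_i$ occurring in a decomposition $f=f_1+f_2$ lie in its image; a rank-$1$ hermitian matrix killing a given $v_i$ is only determined up to a positive rational multiple, so ``comparing kernels'' cannot pin it down. Note that your converse never invokes the hypothesis $\ker(\eta)=E^2[F]$, which is a red flag: for $f=\mathrm{diag}(p,1)$, a polarization matrix whose kernel is $E[p]\times\{0\}$ rather than $E^2[F]$, the decomposition $f=\mathrm{diag}(p,0)+\mathrm{diag}(0,1)$ into rank-$1$ hermitian integral matrices exists but does not correspond to a sum of two elliptic curves. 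So the equivalence you assert as ``immediate'' genuinely requires the kernel hypothesis.

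This is exactly where the paper does its real work, in two steps you would need to supply. First, given $f=f_1+f_2$, it produces $v_i\in\mathsf{O}^2\setminus\{0\}$ with $f_iv_i=0$, lets $E_i$ be the image of $v_i\colon E\to E^2$, and uses Proposition \ref{Pull} to write $r^{-1}(f_i)$ as $n_iE_i$ for some $n_i\in\mathbb{Z}$; the identity $2p=D\cdot D=2n_1n_2\,E_1\cdot E_2$ then leaves six possibilities for $(n_1,n_2,E_1\cdot E_2)$, the negative ones excluded by ampleness and the cases $(1,p,1)$, $(p,1,1)$ excluded because they would force $E_i[p]\subseteq\ker(\eta)$, contradicting $\ker(\eta)=E^2[F]$. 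This is what rules out my $\mathrm{diag}(p,0)$ example and proves $n_i=1$. Second, the ``furthermore'' claim $E_i=\ker(\gamma(f_i))$ is a scheme-theoretic equality $K(\mathcal{O}(E_i))=E_i$ (no extra finite part), which the paper proves by a theta-group computation: $E_i$ lifts to a level subgroup of $\mathcal{G}(\mathcal{O}(E_i))$ by Proposition \ref{LevelPull}, the commutator pairing $E_i\times K(\mathcal{O}(E_i))/E_i\to\mathbb{G}_m$ is trivial since $E_i$ is proper and connected and the quotient is finite, and this forces $K(\mathcal{O}(E_i))/E_i=0$. The remaining parts of your proposal (the forward direction via a vector in the kernel, the multiplicity-$p$ computation using that $E_1\cap E_2$ is a connected order-$p$ subgroup scheme, existence via Moret-Bailly plus translation by the $2$-torsion point, and the $\mathbb{F}_{p^2}$-rationality) are sound and match the paper.
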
 
\begin{proof}
The existence of an ample divisor $D$ of the form $D=E_1+E_2$ with above properties follows from Theorem \ref{MBPol}. We prove now that to give such a divisor is equivalent to writing $f=f_1+f_2$ with $f_1,f_2\in M_{2,2}(\mathsf{O})$ hermitian of rank $1$. Indeed consider the map
$$r:\text{NS}(E^2)\longrightarrow \End(E^2),\, D \mapsto \mu^{-1} \circ \lambda_{\mathcal{O}(D)}$$
where $\mu$ denotes the natural product polarization on $E^2$. By \cite[p. 174, Theorem 2 and p. 214, Theorem 3]{MumAV} $r$ is an injective homomorphism and $\text{im}(r)$ is the subgroup of endomorphisms fixed by the Rosati involution induced by $\mu$. Under the identification $\End(E^2)=M_{2,2}(\mathsf{O})$ the Rosati involution is given by the hermitian transpose. The desired bijection is defined by $f_i=r(E_i),\, i=1,2$.
\par We prove that this bijection has the right properties. Indeed suppose we are given an ample divisor $D=E_1+E_2$ inducing the polarization $\eta$. We have to show that $f_i=r(E_i)$ have rank 1 for $i=1,2$. To show this notice first that clearly $E_i\subseteq \ker(\gamma(f_i)),\, i=1,2$. Furthermore by Poincar\'e's reducibility theorem $E^2$ is isogenous to $E_1\times E_2$ and therefore $E_1, E_2$ must be supersingular. Therefore there is an isogeny $E\longrightarrow E_i$. The composed map $E\rightarrow E_i \hookrightarrow E^2$ corresponds to a $v_i\in \mathsf{O}^2\setminus \{0\}$. Now $f_i v_i=0$ proving that $\rk f_i \leqslant 1$. Since certainly $f_i\neq 0$ we deduce $\rk f_i=1$.
\par Conversely suppose we are given $f_1,f_2\in M_{2,2}(\mathsf{O})$ hermitian of rank 1 such that $f=f_1+f_2$. Consider $D_i=r^{-1}(f_i),\, i=1,2$. We have to show that the $D_i$ are algebraically equivalent to elliptic curves with the required properties. Indeed as $\rk f_i=1$, there exist $v_i\in \mathsf{O}^2\setminus \{0\}$ such that $f_i v_i=0$. The $v_i$ correspond to maps $E\longrightarrow E^2$ whose image $E_i$ is an elliptic curve satisfying $E_i\subset \ker(\gamma(f_i))$. It follows from Proposition \ref{Pull} that $D_i$ is algebraically equivalent to $n_i E_i$ for some $n_i\in \mathbb{Z}$. On the other hand\\
$D.D=2\deg(D)=2\sqrt{\deg \eta}=2p=2n_1 n_2 E_1.E_2$. Therefore
$$(1,1,p),(1,p,1),(p,1,1),(-1,-1,p),(-1,-p,1),(-p,-1,1)$$
are all the possibilities for $(n_1,n_2, E_1.E_2)$. The last three possibilities are excluded by ampleness of $D$. The 2nd and 3rd would imply $E_1[p]\subseteq \ker(\gamma(f_1))\cap \ker(\gamma(f_2))\subseteq \ker(\eta)$ respectively $E_2[p]\subseteq \ker(\eta)$ contradicting $\ker(\eta)=E^2[F]$. This shows that $D=E_1+E_2$ and $E_1. E_2 = p$.
\par We will now prove that $E_i=\ker(\gamma(f_i))$. Indeed denote by $\mathcal{L}_i=\mathcal{O}(E_i)$ the line bundle corresponding to $E_i$. Since $f_i=\mu^{-1}\circ \lambda_{\mathcal{L}_i}$, the claim is equivalent to $K(\mathcal{L}_i)=E_i$. Thus we will compute $K(\mathcal{L}_i)$. Indeed clearly $E_i\subset K(\mathcal{L}_i)$ and $K(\mathcal{L}_i)\neq E^2$ because $E_i$ is not algebraically equivalent to $0$. Therefore $G=K(\mathcal{L}_i)/E_i$ is finite. On the other hand if $\pi: E^2 \rightarrow E^2/E_i$ denotes the natural quotient map, then $\mathcal{L}_i=\pi^* \mathcal{O}(0)$. By Proposition \ref{LevelPull} $E_i$ lifts to a level subgroup in $\mathcal{G}(\mathcal{L}_i)$. We compute the centralizer $\mathfrak{C}(E_i)$ in $\mathcal{G}(\mathcal{L}_i)$. For this we note that the commutator pairing factors through $E_i\times G \rightarrow \mathbb{G}_m$. This induces a group homomorphism $E_i \rightarrow G^D$. But $E_i$ is proper and geometrically connected and $G^D$ is finite. Therefore $E_i \rightarrow G^D$ is trivial. We conclude $\mathfrak{C}(E_i)=\mathcal{G}(\mathcal{L}_i)$. Using \cite[Proposition 8.15]{vdGMooAV} one obtains $\mathbb{G}_m=\mathcal{G}(\mathcal{O}(0_{E^2/E_i}))=\mathfrak{C}(E_i)/E_i$. This implies $G=0$ and we have proven $E_i=\ker(\gamma(f_i))$.
\end{proof}
\begin{cor}\label{RedQuot}
In the notation of the previous proposition, the scheme theoretic intersection $\mathcal{H}=E_1 \cap E_2 \subset E^2$ is a closed subgroup scheme isomorphic to $\alpha_p$. The quotient $A=E^2/\mathcal{H}$ is isomorphic to $E_1^{(p)}\times E_2^{(p)}$. Also $\eta$ descends along the isogeny $E^2\rightarrow A$ to a principal polarization $\lambda$ on $A$. In particular $(A,\, \lambda)$ is a fiber of the Moret-Bailly family constructed from $E^2, \eta$. Furthermore $\lambda$ is the natural product polarization on $E_1^{(p)}\times E_2^{(p)}$.
\end{cor}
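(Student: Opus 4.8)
The plan is to establish the five assertions in turn, reducing everything to the structural facts already obtained in Proposition \ref{decomp}, namely $E_i=\ker\gamma(f_i)$ and $E_1.E_2=p$, together with the descent machinery of Section 3. Since $E_1,E_2$ are abelian subvarieties, their scheme-theoretic intersection $\mathcal{H}$ is automatically a closed subgroup scheme, so the content of the first claim is that $\mathcal{H}\cong\alpha_p$. First I would locate $\mathcal{H}$ inside $E^2[F]$: on $T$-points any $x\in\mathcal{H}(T)$ lies in both $\ker\gamma(f_1)$ and $\ker\gamma(f_2)$, so additivity of $\gamma$ gives $\gamma(f)(x)=\gamma(f_1)(x)+\gamma(f_2)(x)=0$; as $\mu$ is an isomorphism and $\eta=\mu\circ\gamma(f)$, this means $\mathcal{H}\subseteq\ker\eta=E^2[F]\cong\alpha_p^2$. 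Next I would compute $\#\mathcal{H}$: since $E_1,E_2$ meet only at the origin, the length of $\mathcal{H}$ equals the intersection number $E_1.E_2=p$. A subgroup scheme of order $p$ inside $\alpha_p^2$ is necessarily isomorphic to $\alpha_p$ (for instance via the anti-equivalence of Theorem \ref{AlphaThm}), giving $\mathcal{H}\cong\alpha_p$.

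For $A\cong E_1^{(p)}\times E_2^{(p)}$, the key observation is that $\mathcal{H}\subset E_i$ is a subgroup of order $p$ in a supersingular elliptic curve, hence equals the unique such subgroup $E_i[F]$, so $E_i/\mathcal{H}=E_i^{(p)}$. I would then introduce the addition isogeny $a\colon E_1\times E_2\to E^2$, whose kernel is the antidiagonal copy of $\mathcal{H}$, so $\deg a=p$. Writing $q\colon E^2\to A$ for the quotient, I would compare the two degree-$p^2$ isogenies $q\circ a$ and $F\times F$ out of $E_1\times E_2$: the kernel of $q\circ a$ is $a^{-1}(\mathcal{H})$, of order $p^2$, and it contains $\mathcal{H}\times\mathcal{H}=E_1[F]\times E_2[F]=\ker(F\times F)$ (on $T$-points, $x,y\in\mathcal{H}$ give $x+y\in\mathcal{H}$ since $\mathcal{H}$ is a subgroup of $E^2$). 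Equality of orders forces the kernels to coincide, so both isogenies realize the same quotient and $A\cong E_1^{(p)}\times E_2^{(p)}$, with $\pi(E_i)$ corresponding to the $i$-th factor.

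To descend $\eta$, I would show $D=E_1+E_2$ is $\mathcal{H}$-invariant: since $\mathcal{H}\subset E_i$ and $E_i$ is a subgroup, one checks $m^{-1}(E_i)=\pr_2^{-1}(E_i)$ on $\mathcal{H}\times E^2$, so each $E_i$, and hence $D$, is $\mathcal{H}$-invariant. Proposition \ref{LevelPullDiv} then produces $\mathcal{M}=\mathcal{O}(\pi(D))$ with $\pi^*\mathcal{M}\cong\mathcal{O}(D)$; the associated polarization $\lambda$ satisfies $\eta=\pi^\vee\circ\lambda\circ\pi$, and the degree count $p^2=\deg\eta=(\deg\pi)^2\deg\lambda=p^2\deg\lambda$ shows $\lambda$ is principal. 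This exhibits $(A,\lambda)$ as the quotient of $(E^2,\eta)$ by a copy of $\alpha_p$ inside $E^2[F]$, i.e.\ as a fiber of the Moret-Bailly family. Finally, for the product structure I would note $\mathcal{M}=\mathcal{O}(\pi(E_1)+\pi(E_2))$ and that under the isomorphism of the second step $\pi(E_1)=E_1^{(p)}\times\{0\}$ and $\pi(E_2)=\{0\}\times E_2^{(p)}$; the sum of a horizontal and a vertical fiber induces exactly the product principal polarization, so $\lambda$ is the natural one.

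The main obstacle I anticipate is not any single deep step but the consistent scheme-theoretic bookkeeping in the inseparable setting: because $\mathcal{H}$ and the relevant kernels are infinitesimal, the arguments (containment in $E^2[F]$, coincidence of the two kernels, $\mathcal{H}$-invariance of $D$) must be run on functors of points and as equalities of closed subschemes rather than on $k$-points, where $\alpha_p$ is invisible. The length computation $\#\mathcal{H}=E_1.E_2=p$ and the identification $\mathcal{H}=E_i[F]$ using supersingularity are the places where I would be most careful.
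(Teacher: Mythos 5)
Your proof is correct, and it splits into a part that coincides with the paper's argument and a part that is genuinely different. The descent portion is the same: both you and the paper deduce $\mathcal{H}$-invariance of $D$ from $\mathcal{H}\subset E_1,E_2$, apply Proposition \ref{LevelPullDiv} to descend $\mathcal{O}(D)$ to $\mathcal{O}(\pi(D))$, and get principality of $\lambda$ from the degree count $p^2=(\deg\pi)^2\deg\lambda$. For $\mathcal{H}\cong\alpha_p$ the paper argues more locally than you do: $E_1\cap E_2$ is a length-$p$ closed subscheme of the smooth curve $E_1$ supported at the origin, hence equals $\Spec(\mathcal{O}_{E_1,0}/\mathfrak{z}^p)=E_1[F]$; your route via $\mathcal{H}\subseteq\ker\eta=E^2[F]$ plus the length count is equally valid, though the parenthetical appeal to Theorem \ref{AlphaThm} to classify order-$p$ subgroups of $\alpha_p^2$ is slightly circular (to invoke that anti-equivalence you must already know the subgroup is an $\alpha$-group scheme; Dieudonn\'e theory, or your own later observation that $\mathcal{H}=E_i[F]$ is the unique order-$p$ subgroup of the supersingular $E_i$, closes this). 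The genuine divergence is the product structure: the paper notes that $\pi^*\mathcal{O}(\pi(D))\cong\mathcal{O}(D)$ forces $D=\pi^{-1}(\pi(D))$, so $D\to\pi(D)$ is finite radicial of degree $p$ and $\pi(D)=E_1^{(p)}+E_2^{(p)}$, and then invokes a theorem of A.~Weil (a principally polarized abelian surface with reducible theta divisor is a product of elliptic curves carrying the product polarization) to obtain both $A\cong E_1^{(p)}\times E_2^{(p)}$ and the identification of $\lambda$ in one stroke. You instead construct the isomorphism by hand, comparing $q\circ a$ with $F\times F$ on $E_1\times E_2$ and matching the two kernels by the order count $p^2$; this avoids Weil's theorem entirely, is more self-contained, and yields the isomorphism explicitly, which is what lets you identify $\pi(E_i)$ with the coordinate curves and read off the product polarization directly. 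The trade-off is length: the paper's route is shorter but rests on a nontrivial classical result, while yours replaces it with elementary bookkeeping of isogenies (where, as you note, the only care needed is that all identifications are made scheme-theoretically; one should also record that $a$ is surjective because its kernel is finite and $E_1\neq E_2$).
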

\begin{proof}
Choose a uniformizer $\mathfrak{z}\in \mathcal{O}_{E_1,0}$. Since $E_1,E_2$ intersect with multiplicity $p$, the intersection $\mathcal{H}=E_1\cap E_2$ is given by the image of the closed immersion $\Spec(\mathcal{O}_{E_1,0}/\mathfrak{z}^p) \hookrightarrow E_1$. Therefore $\mathcal{H}=E_1[F]$ as a closed subscheme. We conclude that $\mathcal{H}$ is a closed subgroup scheme isomorphic to $\alpha_p$.
\par Consider the quotient $A=E^2/\mathcal{H}$ and denote the quotient map by $\pi: E^2\rightarrow A$. Now because $\mathcal{H}\subset E_1$ and $\mathcal{H}\subset E_2$ is a subgroup scheme, the divisor $D$ is $\mathcal{H}$-invariant. Therefore by Proposition \ref{LevelPullDiv} $\mathcal{O}(D)$ descends along $\pi$ to the line bundle $\mathcal{O}(\pi(D))$. Since $D$ induces the polarization $\eta$, $\eta$ thus descends to a polarization $\lambda$ induced by $\pi(D)$ on $A$. Computing degrees yields that $\lambda$ is a principal polarization. On the other hand $\mathcal{O}(D)=\pi^* \mathcal{O}(\pi(D))$ implies $D=\pi^{-1}(\pi(D))$. Therefore the map $D \rightarrow \pi(D)$ is a finite radicial map of degree $p$ being the pull-back of such a map. Thus $\pi(D)=E_1^{(p)} + E_2^{(p)}$ and by a result of A. Weil $A\cong E_1^{(p)}\times E_2^{(p)}$ and $\lambda$ is the natural product polarization.
\end{proof}
\begin{cor}\label{decompMB}
In the notation of the previous proposition there is a bijective map
$$\left\{
\begin{tabular}{@{}l@{}}
\emph{Unordered pairs} $f_1,f_2\in M_{2,2}(\mathsf{O})$\\
\emph{hermitian}
\emph{of rank 1}\\
\emph{such that} $f=f_1+f_2$
\end{tabular}
\right\} \rightarrow \left\{
\begin{tabular}{@{}l@{}}
\emph{reducible fibers of}\\
\emph{the Moret-Bailly family}\\
  $\mathfrak{C}\rightarrow \mathbb{P}^1$ \emph{constructed from} $f$
\end{tabular}
\right\}\,. $$
\end{cor}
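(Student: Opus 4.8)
The plan is to realize the stated map as the composition of the bijection from Proposition~\ref{decomp} with the assignment of Corollary~\ref{RedQuot}, and then to prove it is a bijection by exhibiting an explicit inverse. First I would fix the map: given an unordered pair $f_1,f_2$, Proposition~\ref{decomp} produces a divisor $D=E_1+E_2$ with $E_1,E_2$ elliptic curves through the origin meeting with multiplicity $p$, and Corollary~\ref{RedQuot} attaches to it the reducible fiber $E^2/\mathcal{H}$ with $\mathcal{H}=E_1\cap E_2\cong\alpha_p$. Since $\mathcal{H}$ is symmetric in $E_1,E_2$, the assignment descends to unordered pairs. Because the fibers of $\mathfrak{C}\rightarrow\mathbb{P}^1$ are indexed by the subgroup schemes $\mathcal{H}\cong\alpha_p$ of $E^2[F]$, naming the fiber amounts to naming $\mathcal{H}$, so the map is well defined.

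For surjectivity I would start from an arbitrary reducible fiber $A$, sitting at the point $\mathcal{H}\in\mathbb{P}^1$, with quotient map $\pi:E^2\rightarrow A$ of kernel $\mathcal{H}$. By Theorem~\ref{MBPol} its theta divisor is reducible, $\Theta_A=C_1+C_2$ with $C_1,C_2$ elliptic curves meeting transversely in one point; after translating $\Theta_A$ I may assume this point is $0_A$, so that $C_1,C_2$ are abelian subvarieties. Pulling back and using that $D:=\pi^{-1}(\Theta_A)$ is the corresponding fiber of $\mathfrak{D}$ (reduced, by Theorem~\ref{MBPol}), each $E_i:=\pi^{-1}(C_i)$ is a single elliptic curve through the origin with $\pi|_{E_i}$ the relative Frobenius, as $\mathcal{H}$ is connected. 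Then $E_1\cap E_2=\pi^{-1}(C_1\cap C_2)=\pi^{-1}(0_A)=\mathcal{H}$, and since $\pi^*\lambda=\eta$ the divisor $D=E_1+E_2$ induces $\eta$. Thus $D$ is of the type described in Proposition~\ref{decomp}, its image under the map is $E^2/\mathcal{H}=A$, and every reducible fiber is hit.

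For injectivity I would run this recipe in reverse as an inverse map, sending a reducible fiber $A$ to the unordered pair $\{r(E_1),r(E_2)\}$ obtained by pulling back the two components of its reducible theta divisor. The one point needing care --- and the main obstacle --- is that this recipe is well defined, i.e.\ that the reducible theta divisor whose two components are abelian subvarieties through the origin is unique: a priori a supersingular abelian surface admits many product decompositions. Here I would use that the polarization $\lambda$ is fixed, so the theta divisor of $(A,\lambda)$ is unique up to translation and a divisor decomposes uniquely into irreducible components. Hence any reducible theta divisor has the form $t_a^*\Theta_A=(C_1+a)+(C_2+a)$ for some $a\in A$, and requiring both translated components to be abelian subvarieties forces $a\in C_1\cap C_2$, whence $t_a^*\Theta_A=\Theta_A$ and the pair $\{C_1,C_2\}$ is forced. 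Composing the inverse with the map then recovers $\{E_1,E_2\}$, and therefore $\{f_1,f_2\}=\{r(E_1),r(E_2)\}$, from the fiber, which together with surjectivity establishes the bijection.
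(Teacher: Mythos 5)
Your proposal is correct and follows essentially the same route as the paper: the forward map is the composite of Proposition \ref{decomp} with Corollary \ref{RedQuot}, surjectivity is deduced from Theorem \ref{MBPol} by translating a reducible fiber so that its components pass through the origin, and your key uniqueness step (a translate of $C_1+C_2$ with both components abelian subvarieties forces the translation point into $C_1\cap C_2=\{0\}$) is exactly the paper's injectivity argument, merely repackaged as well-definedness of an explicit inverse. The only cosmetic difference is that you translate and decompose downstairs on $A$ and then pull back (using reducedness of the fibers of $\mathfrak{D}$ from Theorem \ref{MBPol}), whereas the paper translates $\mathfrak{D}_x$ upstairs on $E^2$ and recovers $D=\pi^{-1}(\pi(D))$ via Proposition \ref{LevelPullDiv}.
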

\begin{proof}
Proposition \ref{decomp} and Corollary \ref{RedQuot} give us such a map. We claim that the map is injective. Indeed suppose that the two pairs $\{f_1,f_2\}$ and $\{f_1', f_2'\}$ map to the same element in the target. Define $D$ to be the divisor $\ker(\gamma(f_1))+\ker(\gamma(f_2))$ on $E^2$ and similarly $D'=\ker(f_1')+\ker(f_2')$. Define further $\mathcal{H}=\ker(\gamma(f_1))\cap \ker(\gamma(f_2))$ and $\mathcal{H}'=\ker(f_1')\cap \ker(f_2')$. Then one has $\mathcal{H}=\mathcal{H}'$ as subgroup schemes because $\{f_1,f_2\}$ and $\{f_1', f_2'\}$ map to the same element in the target. Denote by $\pi: E^2\rightarrow E^2/\mathcal{H}$ the quotient map. Then $\pi(D), \, \pi(D')$ induce the same principal polarization. Therefore there exists a $2$-torsion point $P$ such that $\pi(D)=t_P (\pi(D'))$. But the two components of $\pi(D)$ (resp. $\pi(D')$) intersect only at $0$. Therefore one must have 
$P=0$. This implies $\pi(D)=\pi(D')$. But by Proposition \ref{LevelPullDiv} one has $D=\pi^{-1}(\pi(D))$ (resp. $D'=\pi^{-1}(\pi(D)')$) and thus we conclude $D=D'$. Proposition \ref{decomp} gives $\{f_1,f_2\}=\{f_1',f_2'\}$ as unordered pairs. This proves the injectivity. We prove the surjectivity. Indeed let $x\in \mathbb{P}^1$ be a point such that $\mathfrak{C}_x$ is reducible. Consider the divisor $\mathfrak{D}_x$ on $E^2$. By Theorem \ref{MBPol} $\mathfrak{D}_x$ is the union of two elliptic curves intersecting at a $2$-torsion point $P\in E^2[2]$. Define $D=t_P(\mathfrak{D}_x)$. Then $D$ is an ample divisor inducing the polarization $\eta$ and the union of two elliptic curves intersecting at the origin. Proposition \ref{decomp} gives the desired preimage $\{f_1,f_2\}$ under our map.
\end{proof}
%\begin{rem}
%By a theorem of Moret-Bailly there exist $5p-5$ decompositions as in the proposition. The proof of this fact is geometric, thus it would be interesting to have a purely algebraic proof.
%\end{rem}
We further investigate these decompositions $f=f_1+f_2$ from the proposition.
\begin{prop}\label{decompLinAlg} Let $f\in M_{2,2}(\mathsf{O})$ be a hermitian positive definite matrix inducing a polarization $\eta$ on $E^2$ with kernel $E^2[F]$. Consider the hermitian lattice $L=(\mathsf{O}^2,f)$. To write $f=f_1+f_2$ where $f_1,f_2\in M_{2,2}(\mathsf{O})$ are hermitian rank 1 matrices is equivalent to giving two sublattices $I_1, I_2\subseteq L$ with the following properties i)-iii). If $I_1,I_2$ satisfy i) and ii), then condition iii) is equivalent to iii'):
\begin{itemize}
\item[i)] $I_1,I_2$ are invertible right-$\mathsf{O}$-modules.
\item[ii)] $I_1\perp I_2$ with respect to $f$.
\item[iii)] $L/(I_1\oplus I_2) \cong \mathsf{O}/F$.
\item[iii')] $I_2$ is primitive and for every $v\in I_1$ one has $v^\dagger f v=p [I_1:v \mathsf{O}]$.
\end{itemize}
The bijection is defined as follows: Given $f_1,f_2$ we define $I_i=\ker(f_i),\, i=1,2$ considered as an $\mathsf{O}$-submodule of $\mathsf{O}^2$.
\par Conversely given $I_1,I_2$ there is a direct sum decomposition
$$L \otimes B=(I_1 \otimes B) \oplus (I_2 \otimes B)\,.$$
Denote by $P_i\in M_{2,2}(B)$ the matrices of the projection maps onto the two factors respectively. Then  one defines $f_1=P_2^\dagger f P_2,\, f_2=P_1^\dagger f P_1$.
\end{prop}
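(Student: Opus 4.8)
The plan is to separate a purely $B$-linear ``reconstruction'' computation from the genuinely arithmetic control of denominators and indices, which I will handle by localizing at $p$. The core of the formal part is the following identity, which I would record first. Suppose $V=L\otimes B$ splits $f$-orthogonally as $V=W_1\perp_f W_2$ with $\dim_B W_i=1$, and let $P_i$ be the projection onto $W_i$ along the other summand. Then $g_1:=P_2^\dagger f P_2$ and $g_2:=P_1^\dagger f P_1$ are hermitian, the cross terms vanish, i.e. $P_1^\dagger f P_2=P_2^\dagger f P_1=0$ (since $(P_1u)^\dagger f(P_2v)=0$ by $W_1\perp_f W_2$, using that $x^\dagger$ is the adjoint of $x$ for the standard pairing $\langle a,b\rangle=a^\dagger b$), so that $g_1+g_2=(P_1+P_2)^\dagger f(P_1+P_2)=f$; moreover $g_1$ kills $W_1$ and restricts to $f$ on $W_2$, whence $\ker g_1=W_1$ and $\rk g_1=1$, and symmetrically for $g_2$. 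This one lemma produces the backward map and simultaneously shows it inverts the forward map at the level of $B$-forms: if one starts from rank $1$ hermitian $f_1,f_2$ with $f_1+f_2=f$ and sets $W_i=\ker(f_i)\otimes B$, then $W_1\perp_f W_2$ because $w_1^\dagger f w_2=w_1^\dagger f_1 w_2+w_1^\dagger f_2 w_2=0$ (each $f_i$ is hermitian with $W_i\subseteq\ker f_i$), and the reconstructed forms coincide with the originals, as both annihilate $W_1$ and agree with $f$ on $W_2$.

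Next I would justify that the hypotheses i) and ii) actually give the orthogonal decomposition asserted in the statement, so that $P_i$ makes sense. Condition i) forces each $I_i\otimes B$ to be $1$-dimensional, ii) gives orthogonality, and positive-definiteness of $f$ rules out $I_1\otimes B=I_2\otimes B$ (an $f$-isotropic line cannot exist); hence $V=(I_1\otimes B)\perp_f(I_2\otimes B)$. For the forward direction, $I_i=\ker f_i$ is saturated in $\mathsf{O}^2$ (if $w a\in\ker f_i$ with $a\ne 0$ then $f_i w a=0$ forces $f_i w=0$ in $B^2$), so $\mathsf{O}^2/I_i$ is torsion-free, hence projective over the hereditary maximal order $\mathsf{O}$, so $I_i$ is a direct summand locally and therefore an invertible rank $1$ module; this is condition i). Saturation also yields $\ker(f_i)\cap\mathsf{O}^2=(I_i\otimes B)\cap\mathsf{O}^2=I_i$, which is exactly what is needed to close the loop in the backward direction (there one must know both $I_1,I_2$ are primitive, the primitivity of $I_2$ being recorded in iii$'$ and that of $I_1$ being forced by the norm condition there).

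The arithmetic heart — and what I expect to be the main obstacle — is the integrality of the reconstructed $f_i=P_j^\dagger fP_j\in M_{2,2}(B)$ together with the index condition iii). Both are local conditions, trivial at primes $\ell\ne p$ (there $\mathsf{O}_\ell\cong M_{2,2}(\mathbb{Z}_\ell)$ splits and the statement is classical via Morita equivalence), so everything concentrates at $p$, where $\mathsf{O}_p$ is the maximal order in the division quaternion algebra over $\mathbb{Q}_p$ with uniformizer $F$, satisfying $\overline{F}=-F$, $\mathrm{Nrd}(F)=p$ and $\mathsf{O}_p/F\mathsf{O}_p\cong\mathbb{F}_{p^2}$. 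The hypothesis $\ker(\eta)=E^2[F]$ translates into $\mathrm{coker}(f:L\to L^\dagger)\cong\mathsf{O}/F$, which pins down the local shape of $f$. Over $\mathsf{O}_p$ the invertible modules $I_i$ are free, say $I_i=w_i\mathsf{O}_p$, and iii) says the transition matrix $[w_1\mid w_2]$ has elementary divisors $(1,F)$; this exact shape (length one over $\mathsf{O}/F$, not merely a bound) is what forces $P_j^\dagger fP_j$ to have entries in $\mathsf{O}$ rather than in $F^{-1}\mathsf{O}\subseteq p^{-1}\mathsf{O}$, which is the delicate point. I would cross-check the outcome geometrically: under $\mathsf{O}^2=\Hom(E,E^2)$ one has $I_i=\Hom(E,E_i)$, and the addition isogeny $a:E_1\times E_2\to E^2$ has kernel $E_1\cap E_2\cong\alpha_p$ and degree $p$, so $a_\ast:I_1\oplus I_2\hookrightarrow L$ has cokernel $\cong\mathsf{O}/F$, matching iii).

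Finally, the equivalence of iii) and iii$'$ under i) and ii) I would obtain by discriminant/reduced-norm bookkeeping, again localized at $p$. Here one must read $[I_1:v\mathsf{O}]$ as the \emph{reduced} module index (the positive square root of the $\mathbb{Z}$-index, equivalently the reduced norm of the quotient), which scales by $\mathrm{Nrd}(\lambda)$ under $v\mapsto v\lambda$ and so matches the scaling of the central integer $v^\dagger f v=v^\dagger f_2 v$; with this reading iii$'$ says precisely that the rank $1$ hermitian lattice $(I_1,f|_{I_1})$ has reduced norm $p$ while $I_2$ is saturated. Using the orthogonal splitting $L\otimes B=(I_1\otimes B)\perp_f(I_2\otimes B)$ together with $\mathrm{disc}(L)=\mathrm{Nrd}(f)=p^2$ and the multiplicativity of discriminants under orthogonal sums and finite-index inclusions, the single simple cokernel $\mathsf{O}/F$ in iii) is equivalent to these two normalizations, the apparent asymmetry between $I_1$ and $I_2$ being an artifact of distributing one unit of the reduced norm $p$ to the form on $I_1$ and the saturation to $I_2$.
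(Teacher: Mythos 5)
Your architecture (a rational reconstruction lemma, then localization with everything concentrated at $p$) is sound and runs parallel to the paper's proof, and several pieces are correct: the $B$-linear lemma, the verification of i)--ii) via saturation and hereditariness, and the reduced-index reading of iii$'$), which indeed matches the paper's conventions. But the arithmetic heart, which you yourself flag as the delicate point, contains a genuine error and is then left unproven. The error is in the translation of the hypothesis: $\ker(\eta)=E^2[F]$ does \emph{not} give $\mathrm{coker}(f\colon L\to L^\dagger)\cong\mathsf{O}/F$. Since $\deg\eta=\sharp(E^2[F])=p^2$, one has $\mathrm{Nrd}(f)=p^2$, and the correct statement (the one the paper uses) is $fL=L^\dagger F$, i.e. $\mathrm{coker}(f)\cong(\mathsf{O}/F)^2$, of length two. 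You have apparently transferred the elementary-divisor shape $(1,F)$, which belongs to the inclusion $I_1\oplus I_2\subseteq L$ of condition iii), onto $f$ itself, whose local shape is $(F,F)$. Your version is inconsistent with your own normalization $\mathrm{disc}(L)=\mathrm{Nrd}(f)=p^2$ in the last paragraph, and it makes the bookkeeping there impossible: with the correct cokernel, the map $I_{1,(p)}\oplus I_{2,(p)}\to I_{1,(p)}^\dagger\oplus I_{2,(p)}^\dagger$ has cokernel of length $1+2+1=4$, forcing $\nu_p(d_1)=\nu_p(d_2)=1$ for the diagonal entries $d_i$ of the localized Gram matrix (this is exactly how the paper proves iii)$\Leftrightarrow$iii$'$)); with yours the total is $1+1+1=3$, which is odd and hence impossible, because each summand $\mathsf{O}/d_i\mathsf{O}$ has even length $2\nu_p(d_i)$.

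The unproven step is the integrality of $f_i=P_j^\dagger fP_j$ at $p$: you assert that iii) ``forces'' it, but never exhibit the mechanism, and your parenthetical diagnosis (``length one over $\mathsf{O}/F$, not merely a bound'') is backwards. What drives integrality is only the bound: iii) gives $LF\subseteq I_1\oplus I_2$, hence $P_jL\subseteq\frac{F}{p}\mathsf{O}^2$, i.e. $P_j\in\frac{F}{p}M_{2,2}(\mathsf{O}_{(p)})$ (using $F^2=-p$ and the two-sidedness $F\mathsf{O}=\mathsf{O}F$); combined with $f\in FM_{2,2}(\mathsf{O})$ (the corrected translation above) and the identities $f_1=fP_2$, $f_2=fP_1$ from your formal lemma, one gets $f_i\in FM_{2,2}(\mathsf{O})\cdot\frac{F}{p}M_{2,2}(\mathsf{O})=M_{2,2}(\mathsf{O})$. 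The exact length one is not an input here; it is the \emph{output} of the forward direction, where the paper proves it by showing the quotient is killed by $F$ and then excluding $(\mathsf{O}/F)^2$ by primitivity (and the trivial quotient by the discriminant count). On that forward direction your proposal offers only the geometric ``cross-check'': that the addition isogeny $a\colon E_1\times E_2\to E^2$ of degree $p$ induces a cokernel $\mathsf{O}/F$ on $\Hom(E,-)$ is essentially the statement to be proven, and the exactness properties of $\Hom(E,-)$ for $\alpha_p$-kernels of abelian surfaces are not free, so this cannot substitute for the lattice-theoretic argument. (Your Morita reduction at $\ell\neq p$ is acceptable as an alternative to the paper's Lemma \ref{PerfPair}, though it too is only sketched.) In short: fix the local shape of $f$, and then actually carry out the $\frac{F}{p}$-computation; as it stands the $p$-adic core of the proof is both wrong and missing.
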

\begin{proof}
``$\Rightarrow$'' Suppose we have a decomposition $f=f_1+f_2$ where $f_i\in M_{2,2}(\mathsf{O})$ are hermitian rank 1 matrices with integral entries. Then we define $I_i=\ker(f_i)$ considered as an $\mathsf{O}$-submodule of $\mathsf{O}^2$. It is clear that property i) holds. For showing ii) we take $v_1\in I_1,v_2\in I_2$ arbitrary. Then $v_1^\dagger f v_2= v_1^\dagger f_1 v_2+v_1^\dagger f_2 v_2=v_2^\dagger f_1 v_1=0$ using the definition of the $I_i$.
\par We turn to property iii): Indeed i) and ii) imply the direct sum decomposition
$$L \otimes B=(I_1 \otimes B) \oplus (I_2 \otimes B)\,.$$
Denote by $P_i\in M_{2,2}(B)$ the matrices of the projection maps onto the two factors respectively. Then one has the identities $f_i P_i=0,\, P_1+P_2=\mathds{1}_2$. From this we deduce $f_1=f_1(P_1+P_2)=f_1 P_2$ and similarly $f_2=f_2 P_1$. This implies $f P_2=f_1 P_2 + f_2 P_2=f_1$ and similarly $f P_1 =f_2$. We will now deduce iii) by considering the localization at all prime numbers $l$. Indeed notice first that $I_1\subset L$ is a primitive submodule, i.e., $L/I_i$ is torsion free. This implies that also $I_{1,(l)} \subset L_{(l)}$ is primitive. Furthermore since $\mathbb{Z}_{(l)}$ is a DVR, $I_{1,(l)}$ is free, generated by $v_1\in I_{1,(l)}$, say. Assume first that $l\neq p$. Then since $\ker(\eta)=E^2[F]$ and $l\neq p$, the pairing on $L_{(l)}$ is perfect. Thus by Lemma \ref{PerfPair} there exists an $w\in L_{(l)}$ such that $w^\dagger f v_1=1$. 
On the other hand $P_1$ is the orthogonal projection onto the subspace generated by $v_1$ and thus by basic linear algebra $P_1= \frac{v_1 v_1^\dagger f}{v_1^\dagger f v_1}$. We now compute $w^\dagger f_2 w = w^\dagger f P_1 w=\frac{w^\dagger f v_1 v_1^\dagger f w}{v_1^\dagger f v_1}=\frac{1}{v_1^\dagger f v_1}\in \mathbb{Z}_{(l)}$. This implies that $v_1^\dagger f v_1\in  \mathbb{Z}_{(l)}^\times$. Therefore $P_1\in M_{2,2}(\mathsf{O}_{(l)})$. Similarly one proves $P_2\in M_{2,2}(\mathsf{O}_{(l)})$. This implies $L_{(l)}=I_{1,(l)}\oplus I_{2,(l)}$. It remains to consider the case $l=p$. Then the pairing given by $f$ induces a map $L\longrightarrow L^\dagger$ with image $L^\dagger F$. The same argument as in Lemma \ref{PerfPair} shows the existence of a $w\in L$ such that $w^\dagger f v_1=F$. We now compute $w^\dagger f_2 w = w^\dagger f P_1 w=\frac{w^\dagger f v_1 v_1^\dagger f w}{v_1^\dagger f v_1}=\frac{F \overline{F}}{v_1^\dagger f v_1}=\frac{p}{v_1^\dagger f v_1}\in \mathbb{Z}_{(p)}$. This implies that $P_1=\frac{v_1 v_1^\dagger f}{v_1^\dagger f v_1}\in \frac{F}{p} M_{2,2}(\mathsf{O}_{(p)})$ because $f\in F M_{2,2}(\mathsf{O})$. Similarly $P_2\in \frac{F}{p} M_{2,2}(\mathsf{O}_{(p)}) $. We conclude that $L_{(p)}/(I_{1,(p)}\oplus I_{2,(p)})$ is killed by $F$. This implies that $L_{(p)}/(I_{1,(p)}\oplus I_{2,(p)})$ is either $\mathsf{O}/F$ or $(\mathsf{O}/F)^2$. However the second case is impossible because of the primitivity of $I_{1,(p)}$. This proves iii).
\par ``$\Leftarrow$'': Conversely suppose we are given $I_1$ and $I_2$ satisfying i)-iii). There is a direct sum decomposition
$$L \otimes B=(I_1 \otimes B) \oplus (I_2 \otimes B)\,.$$
Denote by $P_i\in M_{2,2}(B)$ the matrices of the projection maps onto the two factors respectively. Then one defines $f_1=P_2^\dagger f P_2,\, f_2=P_1^\dagger f P_1$. By construction one has $f_1 P_1=f_2 P_2=0$. From these equalities and $P_1+P_2=\mathds{1}_2$ one easily infers $f=f_1+f_2,\, f_1=f P_2,\, f_2=f P_1$. It remains to show $f_1, f_2\in M_{2,2}(\mathsf{O})$. Indeed from ii) one gets $P_1,P_2\in \frac{F}{p} M_{2,2}(\mathsf{O})$. Hence one obtains $f_1=f P_2,\, f_2=f P_1\in M_{2,2}(\mathsf{O})$.
\par We show now that for $I_1, I_2$ satisfying conditions i)-iii) property iii') holds true. Indeed we have already explained in this proof why $I_2$ must be primitive. We prove now the formula $v^\dagger f v = p [I_1: v \mathsf{O}],\, \forall v \in I_1$ by comparing the $l$-adic valuations of both sides for all prime numbers. Since $I_{1,(l)}, I_{2,(l)}$ are free, there exist $v_1\in I_{1,(l)},\, v_2\in I_{2,(l)} $ such that $I_{1,(l)}=v_1 \mathsf{O}_{(l)}, I_{2,(l)}=v_2 \mathsf{O}_{(l)}$. Because $I_1\perp I_2$, the matrix representing the hermitian form on $I_{1,(l)}\oplus I_{2,(l)}$ induced by $f$ is of the form
$$\begin{pmatrix}
d_1 & 0\\
0 & d_2
\end{pmatrix}$$  with $d_1,d_2\in \mathbb{Z}_{(l)}$. Assume first that $l\neq p$. Then the hermitian pairing on $I_{1,(l)}\oplus I_{2,(l)}$ is perfect an thus $d_1,d_2 \in \mathbb{Z}_{(l)}$. This implies $\nu_l(v^\dagger f v) = \nu_l( [I_1: v \mathsf{O}]),\, \forall v \in I_1$. Assume now that $l=p$. Then as $F|f$ we must have $p| d_1, d_2$. On the other hand the map $I_{1,(p)}\oplus I_{2,(p)} \longrightarrow I_{1,(p)}^\dagger\oplus I_{2,(p)}^\dagger$ induced by $f$ has cokernel of length $4$ over the local ring $\mathsf{O}_{(p)}$ with maximal ideal $F \mathsf{O}_{(p)}=\mathsf{O}_{(p)} F$: This follows from $L/(I_1\oplus I_2)=\mathsf{O}/F$ and $\im (L \longrightarrow L^\dagger) = L^\dagger F$. This implies that $\nu_p(d_1 d_2)=\nu_p(F^4)=2$. Therefore $\nu_p(d_1)=\nu_p(d_2)=1$. We conclude $\nu_p(v^\dagger f v) = 1+\nu_p( [I_1: v \mathsf{O}]),\, \forall v \in I_1$. Since the sign of $v^\dagger f v$ is positive we have proven $v^\dagger f v = p [I_1: v \mathsf{O}],\, \forall v \in I_1$.
\par Next we prove that for $I_1, I_2$ satisfying conditions i), ii), iii') property iii) holds true. Indeed given $I_1, I_2$ we define $f_1, f_2$ by the formulae above. We claim that $f_1, f_2$ have integral coefficients. This is proven by localizing at all prime numbers $l$. Using the same notation as in the previous paragraph we have $f_1=\frac{f v_1 v_1^\dagger f}{v_1^\dagger f v_1}$. The matrix coefficients of numerator have $l$-adic valuation $\geqslant \nu_l(p)$, while the denominator has $\nu_l(v_1^\dagger f v_1)=\nu_l(d_1)=\nu_l(p)$. This implies that $f_1\in M_{2,2}(\mathsf{O})$. Therefore also $f_2=f-f_1\in M_{2,2}(\mathsf{O})$. Furthermore by construction $I_i \subseteq \ker(f_i)$. Equality holds because all the lattices are primitive sublattices. Therefore by the direction ``$\Rightarrow$'' we already proved, $I_1, I_2$ satisfy conditions i)-iii).
\end{proof}
\begin{cor}
Using the notation of the previous proposition, the isomorphism classes of the elliptic curves $E_i$ in Proposition \ref{decomp} are determined as follows: By a result of Deuring there is a bijection between isomorphism classes of invertible right-$\mathsf{O}$-modules and isomorphism classes of supersingular elliptic curves over $\overline{\mathbb{F}_{p}}$. Then $E_i$ is the elliptic curve corresponding to $I_i$ under this bijection.
\par In particular one has $E_i\cong E$ if and only if $I_i$ is free as a $\mathsf{O}$-module generated by some vector $v_i\in \mathsf{O}^2$. In this case the map $E_i\hookrightarrow E^2$ is the one given by the vector $v_i$.
\end{cor}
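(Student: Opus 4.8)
The plan is to reduce the statement to Deuring's classical parametrization of supersingular elliptic curves and then to match the abstract module $I_i$ with a concrete $\Hom$-module. First I would recall the cited result of Deuring in the following precise form: the assignment $E'\mapsto \Hom(E,E')$, where $\Hom(E,E')$ carries its natural right $\mathsf{O}=\End(E)$-module structure given by precomposition, induces a bijection between isomorphism classes of supersingular elliptic curves (all isogenous to $E$ over $\overline{\mathbb{F}_p}$) and isomorphism classes of invertible right $\mathsf{O}$-modules; under this bijection $E$ itself is sent to $\mathsf{O}=\Hom(E,E)$, the free module of rank one. Since $E_i$ is supersingular (by Poincaré reducibility, as used already in the proof of Proposition \ref{decomp}), it suffices to produce an isomorphism of right $\mathsf{O}$-modules $I_i\cong \Hom(E,E_i)$ and to conclude that $E_i$ is the curve attached to $I_i$.

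To construct this isomorphism I would exploit the identification $\mathsf{O}^2=\Hom(E,E^2)$ coming from $\gamma$ on column vectors, under which right multiplication by $\mathsf{O}$ becomes precomposition. For $v\in\mathsf{O}^2$ one has $\gamma(f_i)\circ\gamma(v)=\gamma(f_iv)$, which vanishes if and only if $v\in\ker(f_i)=I_i$; and since Proposition \ref{decomp} establishes $E_i=\ker\gamma(f_i)$, this vanishing is equivalent to the statement that $\gamma(v)\colon E\to E^2$ factors through the closed immersion $\iota_i\colon E_i\hookrightarrow E^2$. Sending $v\in I_i$ to the induced map $E\to E_i$ therefore defines a right $\mathsf{O}$-linear map $I_i\to\Hom(E,E_i)$. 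It is injective because $\iota_i$ is a monomorphism, and surjective because any $\psi\colon E\to E_i$ gives $\iota_i\circ\psi=\gamma(v)$ with $\gamma(f_i)\gamma(v)=\gamma(f_i)\circ\iota_i\circ\psi=0$ (as $\gamma(f_i)$ kills $E_i$), forcing $f_iv=0$, i.e. $v\in I_i$. This yields the desired isomorphism, whence $E_i$ corresponds to $I_i$ under Deuring's bijection.

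The ``in particular'' clause then follows by reading off the normalization $E\mapsto\mathsf{O}$: one has $E_i\cong E$ exactly when $\Hom(E,E_i)\cong\mathsf{O}$ as right $\mathsf{O}$-modules, i.e. when $I_i$ is free, say $I_i=v_i\mathsf{O}$. For the final assertion I would note that a free generator $v_i$ corresponds under the isomorphism above to a map $\psi_i\colon E\to E_i$ generating $\Hom(E,E_i)$; choosing any isomorphism $\theta\colon E\xrightarrow{\sim} E_i$ and writing $\theta=\psi_i\circ a$ with $a\in\mathsf{O}$, multiplicativity of degrees gives $1=\deg\theta=\deg\psi_i\cdot\deg a$, so $\deg\psi_i=1$ and $\psi_i$ is itself an isomorphism. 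Then $\gamma(v_i)=\iota_i\circ\psi_i$ exhibits the inclusion $E_i\hookrightarrow E^2$ as the map given by the vector $v_i$. I expect the only genuine subtlety to be bookkeeping the left/right module conventions and the direction of Deuring's bijection, so that the right $\mathsf{O}$-structures on $I_i$ and $\Hom(E,E_i)$ truly coincide; once that identification is set up correctly, every remaining step is formal.
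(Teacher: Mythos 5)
Your proposal is correct, and it starts from the same key observation as the paper's proof: by Proposition \ref{decomp} one has $E_i=\ker(\gamma(f_i))$, so under $\gamma$ a vector $v\in I_i=\ker(f_i)$ is precisely a homomorphism $E\to E^2$ that factors through $E_i$. Where you diverge is in which form of the Deuring correspondence is invoked and how much is actually verified. The paper picks a single nonzero $v\in I_1$, obtains an isogeny $\phi\colon E\to E_1$, and then asserts (``one checks'') that $\phi$ is the isogeny attached to the invertible left $\mathsf{O}$-ideal $\{a\in B\mid va\in I_1\}^{-1}$ by the kernel construction of \cite[Section 42.2]{Voight}; the matching with that construction is left to the reader. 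You instead promote the observation to a full right $\mathsf{O}$-module isomorphism $I_i\cong\Hom(E,E_i)$, $v\mapsto\psi_v$ with $\iota_i\circ\psi_v=\gamma(v)$ (your injectivity and surjectivity checks are sound, resting on the bijectivity of $\gamma$ and on $\gamma(f_i)\circ\iota_i=0$), and then quote the Hom-module form of Deuring, $E'\mapsto\Hom(E,E')$, normalized by $E\mapsto\mathsf{O}$. Your route is more self-contained: it needs no ideal inverses and no comparison with the kernel construction, and the ``in particular'' clause --- which the paper does not argue at all --- comes out in full via your degree argument, showing that a free generator $v_i$ yields an isomorphism $\psi_i\colon E\to E_i$ with $\gamma(v_i)=\iota_i\circ\psi_i$. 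What the paper's route buys is that it exhibits the bijection in exactly the form it cites, namely Voight's ideal-to-isogeny construction, which is also the computationally explicit one. The only caveat, which you flagged yourself, is that the corollary is meaningful only once the bijection is normalized; your normalization and the paper's (left ideal classes $J\mapsto E/E[J]$, converted to right modules via $J\mapsto J^{-1}$) are compatible, since $\Hom(E,E/E[J])\cong J^{-1}$ as right $\mathsf{O}$-modules, so the two proofs do establish the same statement.
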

\begin{proof}
 We have to show that the bijection
$$\left\{
\begin{tabular}{@{}l@{}}
Invertible right\\
 $\mathsf{O}$-modules
\end{tabular}
\right\}_{/\cong} \longrightarrow \left\{
\begin{tabular}{@{}l@{}}
Supersingular elliptic\\
 curves over $\mathbb{F}_{p^2}$
\end{tabular}
\right\}_{/\cong} $$
maps $I_1$ to the elliptic curve $E_1$. Indeed choose $v\in I_1\setminus\{0\}$. Then $v\in \mathsf{O}^2$ determines a map $\phi: E\rightarrow E^2$. Since $v\in I_1=\ker(f_1)$, one has $\im(\phi)\subseteq E_1$. Since $v\neq 0$ the map $\phi:E\longrightarrow E_1$ is an isogeny. One checks that $\phi$ is the isogeny constructed from the invertible left $\mathsf{O}$-ideal
$$\{a\in B | v a \in I_1 \}^{-1} $$
in the construction in \cite[Section 42.2]{Voight}.
\end{proof}
\begin{exmp} Consider for example $E/\mathbb{F}_3$ with Weierstraß equation $y^2=x^3-x$. Choose a square root of -1 in $\mathbb{F}_9$ and denote it by $i$. Then $E$ has the endomorphism $x\mapsto -x, \, y\mapsto i y$ defined over $\mathbb{F}_9$ also denoted $i$ by abuse of notation. Therefore $E$ has CM by the order $\mathbb{Z}[i]$. Since $(3)$ is inert in $\mathbb{Q}(i)$ the curve $E$ is supersingular and
$$\End(E)\otimes \mathbb{Q} = \mathbb{Q}[i,F]/(iF+Fi)$$
is the endomorphism algebra. We can take $$f=\begin{pmatrix}
3 & (1+i)F\\
-(1+i)F& 3
\end{pmatrix}\,.$$
Then one decomposition as in the proposition is
$$f_1=\begin{pmatrix}
2 & (1+i)F\\
-(1+i)F& 3
\end{pmatrix},f_2=\begin{pmatrix}
1& 0\\
0 & 0
\end{pmatrix}\,.$$
One computes that $I_1=\begin{pmatrix}
-F\\ 1+i
\end{pmatrix} \mathsf{O}$
and 
$I_2=\begin{pmatrix}
0\\1
\end{pmatrix} \mathsf{O}$. Hence $$P_1=\begin{pmatrix}
1 & 0\\
\frac{(1+i)F}{3} & 0
\end{pmatrix},\, P_2=\begin{pmatrix}
0 & 0\\
\frac{-(1+i)F}{3} & 1
\end{pmatrix}\,.$$
One checks that the formulas $f_1=P_2^\dagger f P_2,\, f_2=P_1^\dagger f P_1$ are indeed satisfied.
\end{exmp}
The following algorithm computes for a given $f$ and an invertible right $\mathsf{O}$-module $I$ all the sublattices $I_1\oplus I_2$ with properties i)-iii) of the last proposition with $I_1\cong I$. As it turns out this computation reduces to a short vector problem.\\
\begin{algorithm}[H]\label{AlgDec}
\caption{Compute decompositions}
\DontPrintSemicolon
        \SetKwInOut{Input}{input}
        \SetKwInOut{Output}{output}

        \Input{An hermitian positive definite matrix $f\in M_{2,2}(\mathsf{O})$ inducing a polarization $\eta$ on $E^2$ with kernel $E^2[F]$. A fractional ideal $I\subset B$ with right order $\mathsf{O}$.}
        \Output{All decompositions $f=f_1+f_2$ as in Proposition \ref{decomp} such that $\ker(f_1)\cong I$}
        \Begin{
        Consider the hermitian lattice $(\mathsf{O}^2,f)$. By forgetting the $\mathsf{O}$-module structure we get a quadratic space over $\mathbb{Z}$. Consider the sub-$\mathbb{Z}$-module $I^{-1}\oplus I^{-1}$ (notice that this is not a right $\mathsf{O}$-module). Compute the set of short vectors $S=\{v \in I^{-1}\oplus I^{-1}| v^\dagger f v \leqslant p [I:\mathsf{O}]\}$.\\
        output $\{(f_1=\frac{f v v^\dagger f}{v^\dagger f v}, f_2=f-f_1)| v\in S\}  $
	}       
       \end{algorithm}
       \begin{proof}
       We prove the correctness of the algorithm. First we claim that the length of every vector in $I^{-1}\oplus I^{-1}$ is divisible by $p[I:\mathsf{O}]$. Indeed it suffices to show this for every localization at a prime number $l$. There exists an $a\in \mathsf{O}_{(l)}$ such that $I^{-1}=\mathsf{O}_{(l)} a$. Therefore every element in $I^{-1}_{(l)}\oplus I^{-1}_{(l)}$ is of the form $v a$ for $v\in \mathsf{O}_{(l)}^2$. This implies that $\nu_l ((va)^\dagger f (va))=\nu_l(N(a) v^\dagger f v)\geqslant \nu_l(N(a))+\nu_l(p)=\nu_l([\mathsf{O}:I^{-1}])+\nu_l(p)=\nu_l([I:\mathsf{O}])+\nu_l(p)$. Since $l$ was an arbitrary prime we conclude that the length of every vector in $I^{-1}\oplus I^{-1}$ is divisible by $p[I:\mathsf{O}]$. This implies that $S=\{v \in I^{-1}\oplus I^{-1}| v^\dagger f v = p [I:\mathsf{O}]\}$. In particular $S$ will either be empty or the set of shortest vectors.
       \par Let $v\in S$. Then we define $I_1=v I \subset \mathsf{O}^2$ and $I_2$ to be the $f$-orthogonal complement of $I_1$. Then $I_1,I_2$ are right $\mathsf{O}$-submodules of $\mathsf{O}^2$ satisfying i), ii), iii'). By Proposition \ref{decompLinAlg} this corresponds to a decomposition $f=f_1+f_2$ where $f_1$ is given by the formula $\frac{f v v^\dagger f}{v^\dagger f v}$.
       \par Conversely suppose we are given a decomposition $f=f_1+f_2$ with $I_1=\ker(f_1)\cong I$. Then $I_1$ will satisfy property iii') of Proposition \ref{decompLinAlg}. This implies that a generator of the free rank 1 $\mathsf{O}_L(I)$-module\footnote{See \cite[10.2.6]{Voight} for the definition of $\mathsf{O}_L(I)$, the left order of $I$.} $I_1 I^{-1}$ occurs in $S$. This proves that $\{(f_1=\frac{f v v^\dagger f}{v^\dagger f v}, f_2=f-f_1)| v\in S\}$ gives all pairs  as in Proposition \ref{decomp} such that $\ker(f_1)\cong I$. Notice that this list will contain some duplicate elements.
       \end{proof}
       \subsection{Reducible Fibers II: Role of 2-torsion}
		Using the same situation as in section \ref{SectionResultsMB} we want to further understand the reducible fibers of $\mathfrak{D}\rightarrow \mathbb{P}^1$. Notice that this is not fully answered by Proposition \ref{decomp}: If $D=E_1 \cup E_2$ is a reducible fiber of $\mathfrak{D}\rightarrow \mathbb{P}^1$, then $E_1$ and $E_2$ do not necessarily intersect in the origin but rather some point in $E^2[2]$. In this section we explain how to compute this $2$-torsion point. First we need a definition:
		\begin{definition}
		Let $A$ be an Abelian variety and $D$ a symmetric divisor on $A$. Define the function
		$e^D_*: A[2]\rightarrow \{\pm 1\}, \, P \mapsto (-1)^{m_D(P)-m_D(0)}$
		where $m_D(P)$ denotes the multiplicity of $D$ at $P$.
		\end{definition}
		The function $e^D_*$ has the following properties:
		\begin{prop} \label{e*}
		Let $A$ be an Abelian variety and $D$ a symmetric divisor on $A$. Denote $\mathcal{L}=\mathcal{O}(D)$.
		 \begin{enumerate}
		 
		\item[i)] $e^D_*$ depends only on $\mathcal{L}$.
		\item[ii)] $D$ is rationally equivalent to $0$ if and only if $D$ is algebraically equivalent to $0$ and $e^D_*(P)=1,\,  \forall P\in A[2]$.
		\item[iii)] $e^D_*$ is a quadratic function whose associated bilinear form is the restriction of the commutator pairing on $K(\mathcal{L}^2)$ to $A[2]$.
		\item[iv)] $e^D_*$ is locally constant in families, i.e., if $S$ is a $k$-scheme and $\mathfrak{D}\subset A\times_k S$ is a symmetric divisor flat over $S$, then for any $P\in A[2](k)$ the function $|S|\rightarrow \{\pm 1\},\, x\mapsto e^{\mathfrak{D}_x}_*(P)$ is locally constant.
		
		 \end{enumerate}
		\begin{proof}
		The first assertion is true because Mumford gives another definition for $e^D_*$ in terms of $\mathcal{L}$ and shows in \cite[§2, Proposition 2]{EqDefAV} that it is equivalent to our definition. The second assertion follows from \cite[§2, First Properties, iv)]{EqDefAV}. For the third assertion see \cite[Corollary 1, p. 314]{EqDefAV}. For the fourth assertion notice that it follows easily from Mumford's alternative definition that $|S|\rightarrow \{\pm 1\},\, x\mapsto e^{\mathfrak{D}_x}_*(P)$ is continuous in the Zariski topology. Here $\{\pm 1\}$ is considered as a closed subset of $\mathbb{G}_m$.
		\end{proof}
		\end{prop}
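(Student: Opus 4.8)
The plan is to reduce every assertion to Mumford's description of the symmetric normalized isomorphism, which provides a definition of $e^D_*$ intrinsic to the line bundle $\mathcal{L}=\mathcal{O}(D)$. Recall that for a symmetric line bundle there is a canonical isomorphism $\varphi:\mathcal{L}\stackrel{\sim}{\to}(-1)^*\mathcal{L}$ normalized to be the identity on the fiber over the origin. For a point $P\in A[2]$ one has $(-1)^*\mathcal{L}|_P=\mathcal{L}_{-P}=\mathcal{L}_P$, so $\varphi_P$ is a scalar squaring to $1$; this scalar is Mumford's invariant, and the first task is to match it with our multiplicity definition.

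For part i) I would argue locally at a two-torsion point $P$. Choosing a local equation $g$ for $D$ near $P$ and using that $[-1]$ fixes $P$, the symmetry of $D$ forces $(-1)^*g$ and $g$ to agree up to the unit coming from $\varphi$; comparing the parities of the orders of vanishing of $g$ at $P$ and at $0$ then identifies $(-1)^{m_D(P)-m_D(0)}$ with $\varphi_P$. This is exactly the content of \cite[§2, Proposition 2]{EqDefAV}, and since $\varphi$ depends only on $\mathcal{L}$, part i) follows. For part ii), once $D$ is algebraically equivalent to $0$ the bundle $\mathcal{L}$ lies in $\Pic^0(A)$, and Mumford shows that on the two-torsion of $\Pic^0(A)$ the invariant $e^D_*$ is injective, so $e^D_*\equiv 1$ characterizes the trivial bundle, i.e.\ $D$ is rationally equivalent to $0$; this is \cite[§2, First Properties, iv)]{EqDefAV}.

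For part iii) the idea is to pass from $\mathcal{L}$ to $\mathcal{L}^2$ and work inside the theta group $\mathcal{G}(\mathcal{L}^2)$. Since $\lambda_{\mathcal{L}^2}=2\lambda_{\mathcal{L}}$ kills $A[2]$, we have $A[2]\subseteq K(\mathcal{L}^2)$, so the commutator pairing restricts to $A[2]$. Translating the normalized symmetric isomorphism $\varphi$ by a point and comparing the two resulting trivializations produces, by a direct theta-group computation, the identity $e^D_*(P+Q)\,e^D_*(P)^{-1}\,e^D_*(Q)^{-1}=e^{\mathcal{L}^2}(P,Q)$, which says precisely that $e^D_*$ is quadratic with the asserted associated bilinear form; this is \cite[Corollary 1, p. 314]{EqDefAV}. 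Finally, for part iv), I would observe that in a flat family the normalized symmetric isomorphism spreads out to a morphism of line bundles on $A\times_k S$, so evaluation at the fixed point $P$ gives a morphism $S\to\mathbb{G}_m$ with image in the closed subscheme $\{\pm 1\}$; a morphism to a disjoint union of reduced points is locally constant.

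The main obstacle will be part iii): the bilinear form is computed using $\mathcal{L}^2$ while the invariant $e^D_*$ itself is defined using $\mathcal{L}$, so the argument requires careful bookkeeping of how the symmetric structure of $\mathcal{L}$ interacts with translations inside $\mathcal{G}(\mathcal{L}^2)$, and in particular one must keep track of the factor of $2$ relating $\lambda_{\mathcal{L}^2}$ to $\lambda_{\mathcal{L}}$. The remaining parts are essentially local computations or formal consequences of the line bundle reformulation established in step one.
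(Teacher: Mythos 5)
Your proposal is correct and follows essentially the same route as the paper: both reduce all four parts to Mumford's line-bundle definition of $e^D_*$ via the normalized symmetric isomorphism, invoke the same three results from \cite{EqDefAV} for i)--iii), and prove iv) by observing that the invariant varies as a morphism to $\mathbb{G}_m$ landing in the closed subscheme $\{\pm 1\}$. The only difference is that you spell out the content of Mumford's statements in slightly more detail, which the paper leaves as citations.
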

		\begin{cor}\label{RatEq} Two fibers of $\mathfrak{D}\rightarrow \mathbb{P}^1$ are rationally equivalent divisors in $E^2$.
	    \end{cor}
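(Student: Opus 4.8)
The plan is to apply the criterion in Proposition \ref{e*} ii) to the difference $D=\mathfrak{D}_x-\mathfrak{D}_y$ of the two fibers over points $x,y\in\mathbb{P}^1(k)$. Concretely, I would show that $D$ is algebraically equivalent to $0$ and that $e^D_*(P)=1$ for every $P\in E^2[2]$; by part ii) these two conditions together force $D$ to be rationally equivalent to $0$, which is exactly the assertion.

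First I would establish the algebraic equivalence. By Theorem \ref{MBPol} the divisor $\mathfrak{D}$ induces the relative polarization $\eta_{\mathbb{P}^1}$, so restricting to the fibers gives $\lambda_{\mathcal{O}(\mathfrak{D}_x)}=\eta=\lambda_{\mathcal{O}(\mathfrak{D}_y)}$. Since the map $\mathcal{L}\mapsto \lambda_{\mathcal{L}}$ factors through $\mathrm{NS}(E^2)$ and is injective there (as in the map $r$ used in the proof of Proposition \ref{decomp}), the line bundle $\mathcal{O}(D)=\mathcal{O}(\mathfrak{D}_x)\otimes\mathcal{O}(\mathfrak{D}_y)^{-1}$ lies in $\Pic^0(E^2)$, i.e. $D$ is algebraically equivalent to $0$.

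Next I would control $e^D_*$. The key observation is that $e_*$ is additive in the divisor: since the multiplicity $m_{(\cdot)}$ is additive, the defining formula yields $e^{\mathfrak{D}_x}_*(P)=e^{\mathfrak{D}_y}_*(P)\cdot e^D_*(P)$ for each $P\in E^2[2]$, and by Proposition \ref{e*} i) this descends to a well-defined homomorphism on symmetric line bundles (so that the possibly non-effective $D$ causes no trouble). Now apply Proposition \ref{e*} iv) to the family $\mathfrak{D}\subset E^2_{\mathbb{P}^1}$: for fixed $P$ the function $|\mathbb{P}^1|\rightarrow\{\pm1\},\ x\mapsto e^{\mathfrak{D}_x}_*(P)$ is locally constant, and since $\mathbb{P}^1$ is connected it is constant. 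Hence $e^{\mathfrak{D}_x}_*(P)=e^{\mathfrak{D}_y}_*(P)$, and the additivity above gives $e^D_*(P)=1$ for all $P\in E^2[2]$.

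Combining the two steps, $D$ is algebraically equivalent to $0$ and $e^D_*\equiv 1$, so Proposition \ref{e*} ii) shows that $D$ is rationally equivalent to $0$; equivalently $\mathfrak{D}_x$ and $\mathfrak{D}_y$ are rationally equivalent. The only point requiring care is the multiplicativity of $e_*$ in the divisor, which I would justify either directly from the multiplicity formula together with Proposition \ref{e*} i), or from Mumford's alternative description of $e^D_*$ as the sign by which the canonical symmetric structure on $\mathcal{O}(D)$ acts on the fibers at $2$-torsion points; the latter is manifestly a homomorphism in $\mathcal{O}(D)$.
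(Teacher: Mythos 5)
Your proposal is correct and takes essentially the same approach as the paper: establish algebraic equivalence via Theorem \ref{MBPol}, then apply parts ii) and iv) of Proposition \ref{e*} to the difference of the two fibers. The paper's own proof is exactly this argument in compressed form; you have merely made explicit the points it leaves implicit (the multiplicativity of $e_*$ in the divisor and the connectedness of $\mathbb{P}^1$ turning local constancy into constancy).
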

		\begin{proof}
		Indeed by Theorem \ref{MBPol} they both induce the polarization $\eta$. Therefore they are algebraically equivalent. Notice that by assumption $\mathfrak{D}$ is symmetric and thus Proposition \ref{e*} applies. By parts ii) and iv) we conclude that the fibers are rationally equivalent.
		\end{proof}		 
       	\begin{rem} Although one could infer the corollary from the fact that the base of $\mathfrak{D}\rightarrow \mathbb{P}^1$ is a rational curve, we prefer this more involved argument because it generalizes to Li's and Oort's higher dimensional theory.
       	\end{rem}
       	\begin{definition} 
       	
       	\begin{itemize}
       	 \item For $y\in \mathbb{P}^1(k)$ arbitrary we denote $\mathcal{L}=\mathcal{O}(\mathfrak{D}_y)$. By corollary \ref{RatEq} this line bundle is independent of $y$. The letter $\mathcal{L}$ is reserved for this line bundle for the rest of this paper.
       	\item Denote by
       	$$e^\mathfrak{D}_*: E^2[2] \rightarrow \{\pm 1\}$$
       	the function $e^\mathfrak{D}_*=e^{\mathfrak{D}_y}_*$ for any $y\in |\mathbb{P}^1|$.
	   	\end{itemize}       	
       	\end{definition}
       	We are now ready to describe the formula for the $2$-torsion point that is the intersection point of the components of a reducible fiber. Using the previous proposition this boils down to linear algebra over $\mathbb{F}_2$.
       	\par Consider a divisor $D=E_1\cup E_2$ as in Proposition \ref{decomp}. By Corollary \ref{RedQuot} there is a point $x\in \mathbb{P}^1(\mathbb{F}_{p^2})$ such that $\mathcal{A}_x\cong E_1^{(p)} \times  E_2^{(p)}$ with the product polarization.
       	\begin{lem}\label{2-tors}  Using the notation preceding the lemma there is a unique point $P\in E^2[2]$ such that $t_P(D)=\mathfrak{D}_x$. Furthermore $P$ can be computed as follows: 
       	Choose a basis $\mathfrak{B}$ of $E[2]^2$ as an $\mathbb{F}_2$-vector space. Take upper triangular matrices $\mathfrak{Q}, Q \in M_{4,4}(\mathbb{F}_2)$ such that the quadratic functions $e^{\mathfrak{D}}_*$ and $e^{D}_*$ are given by
		$e^\mathfrak{D}_*( (v)_\mathfrak{B})=(-1)^{v^t \mathfrak{Q} v},\, e^{D}_*( (v)_\mathfrak{B})=(-1)^{v^t Q v}$. $\mathfrak{Q}+Q$ is a diagonal matrix and denote its diagonal by the vector $d\in \mathbb{F}_2^4$. Then 
		$$(Q+Q^t)^{-1} d$$
		is the coordinate vector of the point $P$ in the basis $\mathfrak{B}$.
     	\end{lem}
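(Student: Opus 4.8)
The plan is to split the statement into an existence/uniqueness part for the translating point $P$, and a computational part that produces the displayed formula from the quadratic functions $e^D_*$ and $e^\mathfrak{D}_*$.

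\emph{Existence and uniqueness.} First I would use that $\mathfrak{D}_x$ is the fiber over the very point $x$ produced by Corollary \ref{RedQuot}. By Theorem \ref{MBPol} this fiber is a union of two elliptic curves meeting with multiplicity $p$ at some $P_0\in E^2[2]$; translating by $P_0$ yields a divisor $t_{P_0}(\mathfrak{D}_x)$ whose components meet at the origin and which still induces $\eta$. The bijection of Corollary \ref{decompMB}, together with Proposition \ref{decomp}, then forces $t_{P_0}(\mathfrak{D}_x)=D$, hence $\mathfrak{D}_x=t_{P_0}(D)$ (using $-P_0=P_0$), which gives existence with $P=P_0$. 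For uniqueness, if $t_P(D)=t_{P'}(D)$ then translation by $P-P'$ preserves $D=E_1+E_2$ and therefore maps its singular locus to itself; since the two smooth components meet only at the origin, the origin is the unique singular point, so the translation fixes it and $P=P'$.

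\emph{The formula.} The heart of the matter is how $e^D_*$ transforms under translation by a $2$-torsion point. For $R\in E^2[2]$ one has $m_{t_P(D)}(R)=m_D(R-P)=m_D(R+P)$, so an elementary manipulation of exponents gives
$$e^{t_P(D)}_*(R)=e^D_*(R+P)\,e^D_*(P)=e^D_*(R)\,b(R,P),$$
where $b$ is the bilinear form associated to the quadratic function $e^D_*$; the second equality uses the quadratic-function property of Proposition \ref{e*} iii) together with $e^D_*(P)^2=1$. Since $\mathfrak{D}_x=t_P(D)$ this reads $e^\mathfrak{D}_*(R)=e^D_*(R)\,b(R,P)$. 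Writing $w$ for the coordinate vector of $P$ in $\mathfrak{B}$ and noting that $b$ has $\mathbb{F}_2$-matrix $Q+Q^t$ (the polar form of $v\mapsto v^t Q v$), the identity becomes $v^t(\mathfrak{Q}+Q)v=v^t(Q+Q^t)w$ for all $v\in\mathbb{F}_2^4$. The right-hand side is linear in $v$, hence represented by a diagonal matrix; by uniqueness of the upper-triangular representative of a quadratic form this forces $\mathfrak{Q}+Q=\mathrm{diag}(d)$ with $d=(Q+Q^t)w$, which also recovers the asserted diagonality of $\mathfrak{Q}+Q$. Solving then yields $w=(Q+Q^t)^{-1}d$.

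\emph{Main obstacle.} The one step requiring a genuine idea rather than bookkeeping is the invertibility of $Q+Q^t$, i.e.\ the non-degeneracy of $b$ on $E^2[2]$, because the restriction of a non-degenerate pairing to a subgroup need not be non-degenerate. I would resolve this as follows: $K(\mathcal{L}^2)=\ker(2\eta)=[2]^{-1}(E^2[F])$, and since $\ker\eta=E^2[F]$ is $p$-primary and $p>2$, the $2$-primary component of $K(\mathcal{L}^2)$ is exactly $E^2[2]$. The commutator pairing on $K(\mathcal{L}^2)$ is non-degenerate and is orthogonal for the coprime decomposition into its $2$-primary and $p$-primary parts (a pairing between elements of coprime order lands in $\mu_2\cap\mu_{p^\infty}=\{1\}$). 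Therefore its restriction to the $2$-primary part $E^2[2]$, which by Proposition \ref{e*} iii) is the polar form $b$, is again non-degenerate, so $Q+Q^t$ is invertible and the formula makes sense.
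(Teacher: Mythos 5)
Your proof is correct, and its computational core — converting the behaviour of $e_*$ under translation by $P$ into linear algebra over $\mathbb{F}_2$, and obtaining invertibility of $Q+Q^t$ from the non-degeneracy of the commutator pairing on $K(\mathcal{L}^2)$ restricted to its $2$-primary part $E^2[2]$ — is the same as the paper's; your coprimality argument fleshes out the paper's one-line justification (to make it scheme-theoretic, note that the $p$-primary part $E^2[F]\subset K(\mathcal{L}^2)$ is connected, so any homomorphism from it to the \'etale group $\mu_2$ vanishes). The genuine differences are at the two ends. For existence the paper argues on the quotient: by Corollary \ref{RedQuot} one has $(\mathcal{A}_x,\lambda_x)\cong E_1^{(p)}\times E_2^{(p)}$ with the product polarization, so $\mathfrak{C}_x$ and the product divisor $E_1^{(p)}\times\{0\}+\{0\}\times E_2^{(p)}$ are two symmetric divisors inducing the same polarization, hence $2$-torsion translates of one another, and this relation is pulled back through $E^2[2]\cong\mathcal{A}_x[2]$; you argue upstairs on $E^2$, using reducibility of $\mathfrak{D}_x$ and injectivity of the bijection in Corollary \ref{decompMB} to force $t_{P_0}(\mathfrak{D}_x)=D$. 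That route is legitimate — the unstated check that the decomposition attached to $t_{P_0}(\mathfrak{D}_x)$ maps back to the fiber over $x$ is precisely the surjectivity step in the proof of that corollary — but it leans on Corollary \ref{decompMB} where the paper's argument is self-contained. For uniqueness, your observation that a translation preserving $D$ must fix its unique singular point is more elementary than the paper's, which deduces uniqueness from the formula and non-degeneracy. Finally, your transformation law $e^{\mathfrak{D}}_*(R)=e^{D}_*(R)\,b(R,P)$ is the precise one: the paper's intermediate identity $e^{t_P(D)}_*(\cdot)=e^{D}_*(P+\cdot)$ omits the constant factor $e^{D}_*(P)$ (the correct statement is $e^{t_P(D)}_*(\cdot)=e^{D}_*(P+\cdot)\,e^{D}_*(P)$); this is harmless for the final formula, since the constant cancels exactly as in your derivation, and your version has the added benefit of giving the diagonality of $\mathfrak{Q}+Q$ directly from the linearity of the right-hand side, where the paper first has to identify the polar forms of $e_*^{\mathfrak{D}}$ and $e_*^{D}$ by comparing $\mathcal{L}^2$ with $\mathcal{O}(D)^2$.
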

     	\begin{proof}
		We know that $\mathcal{A}_x\cong E_1^{(p)} \times  E_2^{(p)}$ and the polarization $\lambda_x$ is the product polarization by corollary \ref{RedQuot}. Therefore $\lambda_x$ is induced by the symmetric divisor $E_1^{(p)} \times  \{0\}+\{0\} \times  E_2^{(p)}$. By assumption $\mathfrak{C}_x$ is another symmetric divisor inducing $\lambda_x$. Because two symmetric divisors inducing the same polarization are translates by a $2$-torsion point, there exists a $P\in A_x[2]$ such that $\mathfrak{C}_x=t_P(E_1^{(p)} \times  \{0\}+\{0\} \times  E_2^{(p)})$. Now $E^2[2]\cong A_x[2]$ canonically, so by abuse of notation denote the preimage of $P$ under this isomorphism also by $P$. Then $t_P(D)=\mathfrak{D}_x$.
\par This proves the existence of $P$. To prove uniqueness it suffices to establish the formula. Indeed it is clear that $e^\mathfrak{D}_*(\cdot)=e^{t_P(D)}(\cdot)=e^{D}_*(P+\cdot)$. We claim that $P$ is uniquely characterized by this property. This is a question about quadratic forms over $\mathbb{F}_2$. Define now $L=\mathcal{O}(E_1+E_2)$. Then $\mathcal{L}, L$ are ample symmetric line bundles inducing the polarization $\eta$. Thus $\mathcal{L}, L$ are algebraically equivalent and by part ii) of Proposition \ref{e*} one has $\mathcal{L}^2\cong L$. Part iii) of Proposition \ref{e*} gives:
$$e_*^\mathfrak{D}(P_1+P_2)=e_*^\mathfrak{D}(P_1) e_*^\mathfrak{D}(P_2) e^{\mathcal{L}^2}(P_1,P_2)\, \forall P_1,P_2\in E^2[2]$$
and similarly for $D$. Since $\mathcal{L}^2\cong L^2$, this implies that the bilinear forms associated to the quadratic functions $e_*^\mathfrak{D}, e_*^{D}$ agree. From that one deduces $\mathfrak{Q}+\mathfrak{Q}^t=Q+Q^t$ and thus $\mathfrak{Q}+Q$ is indeed a diagonal matrix. 
\par On the other hand $e^{\mathcal{L}^2}(\cdot, \cdot)_{|{E^2[2]}}$ is non-degenerate because it is the restriction of a non-degenerate pairing to the $2$-primary part. Therefore $Q+Q^t$ is invertible. We conclude that $P=((Q+Q^t)^{-1} d)_\mathfrak{B}$ is well-defined. An elementary computation shows that $P$ is the unique point in $E^2[2]$ satisfying 
$e^\mathfrak{D}_*(\cdot)=e^{D}_*(P+\cdot)$
     	\end{proof}
       	\subsection{Reducible Fibers III: Theta groups}
		In this section we continue our study of the reducible fibers of the map $\mathfrak{D}\rightarrow \mathbb{P}^1$. For the main algorithm it will be important to understand how two reducible fibers can be used to pin down an isomorphism of $\mathcal{G}(\mathcal{L})$ with the Heisenberg group $\mathcal{G}(\alpha_p)$.
		\par Suppose we are given two distinct reducible fibers $\mathfrak{D}_x, \mathfrak{D}_{x'}$. Then Corollary \ref{RedQuot} gives a subgroup scheme $\mathcal{H} \subset E^2$ isomorphic to $\alpha_p$ such that $\mathfrak{D}_x$ is $\mathcal{H}$-invariant. Similarly we get $\mathcal{H}' \subset E^2$ isomorphic to $\alpha_p$ such that $\mathfrak{D}_{x'}$ is $\mathcal{H}'$-invariant. By Proposition \ref{LevelPullDiv} $\mathcal{H}$ and $\mathcal{H}'$ uniquely lift to level subgroups $\tilde{\mathcal{H}}$ (resp. $\tilde{\mathcal{H}'}$) inside $\mathcal{G}(\mathcal{L})$.
      \begin{prop}\label{RedTheta} Use the notation preceding the proposition. For any isomorphism $\mathcal{H}\stackrel{\sim}{\rightarrow} \alpha_p$ there is a unique isomorphism $\phi:\mathcal{G}(\mathcal{L})\stackrel{\sim}{\rightarrow} \mathcal{G}(\alpha_p)$ such that $\phi(\tilde{\mathcal{H}})=\{1\}\times \alpha_p \times \{0\}, \phi(\tilde{\mathcal{H}'})=\{1\}\times \{0\} \times \alpha_p$ and the diagram
      $$\begin{tikzcd}
		\tilde{\mathcal{H}} \arrow{r}{\phi_{|\tilde{\mathcal{H}}}}\arrow{d} & \{1\}\times \alpha_p \times \{0\} \arrow[equal]{d}\\
		\mathcal{H} \arrow{r}{\sim} & \alpha_p
      \end{tikzcd}$$
       commutes.
      \end{prop}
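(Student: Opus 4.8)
The plan is to deduce the statement from Lemma \ref{Heis}, after which the given identification $\mathcal{H} \xrightarrow{\sim} \alpha_p$ (call it $\psi$) will rigidify the resulting isomorphism. Recall that $\mathcal{L}$ is ample, that $K(\mathcal{L}) = E^2[F] \cong \alpha_p^2$ is unipotent of rank $p^2$, and that by Proposition \ref{LevelPullDiv} the level subgroups $\tilde{\mathcal{H}}, \tilde{\mathcal{H}'}$ map isomorphically onto $\mathcal{H}, \mathcal{H}' \subset K(\mathcal{L})$.

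First I would check the hypotheses of Lemma \ref{Heis}. Each of $\tilde{\mathcal{H}}, \tilde{\mathcal{H}'}$ has rank $\sharp(\mathcal{H}) = p$, so $\sharp(\tilde{\mathcal{H}})^2 = p^2 = \sharp(K(\mathcal{L}))$ and both are maximal level subgroups. For the transversality $\tilde{\mathcal{H}} \cap \tilde{\mathcal{H}'} = \{1\}$, note that distinct reducible fibers are quotients of $E^2$ by distinct sub-$\alpha_p$'s of $E^2[F]$, so the groups $\mathcal{H}, \mathcal{H}'$ supplied by Corollary \ref{RedQuot} are distinct. Two distinct copies of $\alpha_p$ in $\alpha_p^2$ have distinct one-dimensional Lie algebras, so the addition map $\mathcal{H} \times \mathcal{H}' \to E^2[F]$ is an isomorphism on Lie algebras and hence, by the anti-equivalence of Theorem \ref{AlphaThm}, an isomorphism of group schemes; in particular $\mathcal{H} \cap \mathcal{H}' = \{0\}$. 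Therefore $\tilde{\mathcal{H}} \cap \tilde{\mathcal{H}'}$ maps to $\{0\}$ in $K(\mathcal{L})$, so lies in $\mathbb{G}_m = \ker(\mathcal{G}(\mathcal{L}) \to K(\mathcal{L}))$; since $\tilde{\mathcal{H}} \cap \mathbb{G}_m = \{1\}$, we conclude $\tilde{\mathcal{H}} \cap \tilde{\mathcal{H}'} = \{1\}$.

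For existence, the proof of Lemma \ref{Heis} produces an isomorphism of theta groups $\mathcal{G}(\mathcal{L}) \xrightarrow{\sim} \mathcal{G}(\tilde{\mathcal{H}})$ carrying $\tilde{\mathcal{H}}$ to $\{1\}\times\tilde{\mathcal{H}}\times\{0\}$ and $\tilde{\mathcal{H}'}$ to $\{1\}\times\{0\}\times\tilde{\mathcal{H}}^D$, the latter because the commutator pairing restricts to a perfect pairing $\mathcal{H}\times\mathcal{H}'\to\mathbb{G}_m$ identifying $\tilde{\mathcal{H}'}$ with $\tilde{\mathcal{H}}^D$. I would then compose with the isomorphism of Heisenberg groups induced functorially by the level-structure identification $\tilde{\mathcal{H}}\cong\mathcal{H}$ followed by $\psi$ (acting by its Cartier dual on the third factor, under $\alpha_p^D \cong \alpha_p$), obtaining $\phi: \mathcal{G}(\mathcal{L})\xrightarrow{\sim}\mathcal{G}(\alpha_p)$ as in Definition \ref{DefHeis}. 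By construction $\phi(\tilde{\mathcal{H}}) = \{1\}\times\alpha_p\times\{0\}$ and $\phi(\tilde{\mathcal{H}'}) = \{1\}\times\{0\}\times\alpha_p$, and the required diagram commutes since the middle-factor component of $\phi|_{\tilde{\mathcal{H}}}$ is exactly $\psi$ composed with the projection $\tilde{\mathcal{H}}\to\mathcal{H}$.

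For uniqueness, since $\mathcal{H}\times\mathcal{H}' = K(\mathcal{L})$ the group scheme $\mathcal{G}(\mathcal{L})$ is generated by $\mathbb{G}_m, \tilde{\mathcal{H}}$ and $\tilde{\mathcal{H}'}$, so $\phi$ is determined by its restrictions to these three. An isomorphism of theta groups fixes the central $\mathbb{G}_m$ pointwise, so $\phi|_{\mathbb{G}_m}$ is the identity; the commuting diagram fixes $\phi|_{\tilde{\mathcal{H}}}$; and $\phi|_{\tilde{\mathcal{H}'}}$ is then forced, because $\phi$ is a homomorphism and must send the commutator $e^\mathcal{L}(\tilde{h},\tilde{h}')\in\mathbb{G}_m$ (for $\tilde h \in \tilde{\mathcal{H}}$, $\tilde h' \in \tilde{\mathcal{H}'}$) to the Heisenberg commutator $\chi(\psi(h))$, which determines the character $\chi\in\alpha_p^D$ attached to each point of $\tilde{\mathcal{H}'}$. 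I expect the main obstacle to be precisely this last step: verifying that the perfect commutator pairing rigidifies the third factor so that $\phi|_{\tilde{\mathcal{H}'}}$ is uniquely determined, together with the careful bookkeeping that identifies $\alpha_p^D$ with $\alpha_p$ compatibly with Definition \ref{DefHeis}.
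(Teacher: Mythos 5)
Your route is the paper's own: the paper proves this proposition simply by citing Lemma \ref{Heis} and Corollary \ref{RedQuot}, and the content you supply is exactly what that citation suppresses. Your verification of the hypotheses of Lemma \ref{Heis} is correct (both lifts are maximal level subgroups since $\sharp(\tilde{\mathcal{H}})^2=p^2=\sharp(K(\mathcal{L}))$; distinct reducible fibers give distinct subgroups $\mathcal{H}\neq\mathcal{H}'$ of $E^2[F]\cong\alpha_p^2$, whence $\mathcal{H}\cap\mathcal{H}'=\{0\}$ and therefore $\tilde{\mathcal{H}}\cap\tilde{\mathcal{H}'}\subseteq\mathbb{G}_m\cap\tilde{\mathcal{H}}=\{1\}$), and the rigidification of the isomorphism from Lemma \ref{Heis} by the given $\psi:\mathcal{H}\stackrel{\sim}{\rightarrow}\alpha_p$, acting on the third factor through the Cartier dual and autoduality of $\alpha_p$, is the intended construction.

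There is, however, one step in your uniqueness argument that is false as stated: ``an isomorphism of theta groups fixes the central $\mathbb{G}_m$ pointwise.'' An abstract isomorphism of group schemes only carries center to center, so its restriction to $\mathbb{G}_m$ is $z\mapsto z^{\pm1}$, and the inversion case genuinely occurs: the map $\iota:\mathcal{G}(\alpha_p)\rightarrow\mathcal{G}(\alpha_p)$, $(r,h,\chi)\mapsto(r^{-1},h,-\chi)$, is an automorphism (one checks $(rr'\chi(h'))^{-1}=r^{-1}r'^{-1}(-\chi)(h')$, so it respects the group law of Definition \ref{DefHeis}), it fixes $\{1\}\times\alpha_p\times\{0\}$ pointwise, and it preserves $\{1\}\times\{0\}\times\alpha_p^D$. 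Hence if $\phi$ satisfies the three conditions of the proposition, so does $\iota\circ\phi\neq\phi$, and uniqueness fails for bare group-scheme isomorphisms. The proposition is true, and your commutator-pairing argument then does pin down $\phi|_{\tilde{\mathcal{H}'}}$ and complete the proof, only under the standard convention that an isomorphism of theta groups is required to restrict to the identity on $\mathbb{G}_m$ (i.e., is a morphism of central extensions, ``weight $1$''). This convention is implicit in the paper and is genuinely needed downstream: in the proof of Theorem \ref{Main} the isomorphism $\phi$ is used to identify the weight-$1$ representations on the two sides, which would fail if $\phi$ inverted the center. So you should either build $\phi|_{\mathbb{G}_m}=\id$ into the statement as part of what is being claimed unique, or note explicitly that it is a convention on the class of isomorphisms considered; it cannot be deduced for free as your proof asserts.
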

		\begin{proof}
		Immediate consequence of Lemma \ref{Heis} and Corollary \ref{RedQuot}.
\end{proof}		      
	\begin{rem}\label{InduIso}
      Notice that we do not fix an isomorphism $\mathcal{H}' \cong \alpha_p$, instead such an isomorphism is then determined by $\phi$. In our thesis we will explain how this isomorphism can be computed. One has to make the commutator pairing on $\mathcal{G}(\mathcal{L})$ explicit. There are two distinct formulas for this computation. The first involves certain translation invariant vector fields on $E^2$. The second uses the action of Frobenius and certain endomorphisms of $E$ on de Rham cohomology.
      \end{rem}
	
\subsection{Construction of irreducible fibers}
We drop now the assumption that $k$ is algebraically closed. Instead $k$ is any field containing $\mathbb{F}_{p^2}$. In this section we will explain our algorithm that constructs all the irreducible fibers of a Moret-Bailly family. To define a Moret-Bailly family one needs an hermitian positive definite matrix $f\in M_{2,2}(\mathsf{O})$ inducing a polarization $\eta$ on $E^2$ with kernel $E^2[F]$. The basic idea is to use the information obtained from two reducible fibers of the family $\mathfrak{D} \rightarrow \mathbb{P}^1$. In order to obtain two reducible fibers we need to decompositions $f=f_1+f_2=f_1'+f_2'$ as in Proposition \ref{decomp}. The fibers will be distinct if and only if the sets $\{f_1,f_2\}$ and $\{f_1',f_2'\}$ are distinct by Corollary \ref{decompMB}.
\par Now given $\{f_1,f_2\}$ (resp. $\{f_1',f_2'\}$) we can compute the corresponding reducible fiber $D_0$ (resp. $D_\infty$). From the divisor $D_0$ we can construct a subgroup scheme $\mathcal{H}\cong \alpha_p$ of $E^2$ using Corollary \ref{RedQuot}. Then using Lemma \ref{alpha} we can choose an isomorphism of Hopf algebras $A_{\mathcal{H}}\cong k[z]/(z^p)$ such that the comultiplication is given by $z\mapsto z\otimes 1 + 1 \otimes z$. The next theorem explains how to compute the other fibers $D_x$ for $x=[t:1]\in \mathbb{P}^1(k)\setminus \{ [1:0]\}$. Indeed since all the $D_x$ are rationally equivalent it suffices to determine a rational function $g_x\in K(E^2)$ such that $(g_x)=D_x-D_0$. To start with there exists a rational function $g\in K(E^2)$ such that $(g)=D_{\infty}- D_0$. We claim that we can compute $g_x$ from $g$:        
\begin{thm}\label{Main} In the notation preceding the Theorem let $g_\mathcal{H} \in K(E^2)\otimes_k A_{\mathcal{H}}$ denote the image of $g$ under the map $K(E^2)\rightarrow K(E^2)\otimes_k A_{\mathcal{H}}$ describing the translation action of $\mathcal{H}$ on $E^2$.\\
        Then if one takes $g_x\in K(E^2)$ to be the image of $g_\mathcal{H}$ under the $K(E^2)$-linear map
        $$K(E^2)\otimes A_{\mathcal{H}} \rightarrow K(E^2)$$
        $$\rho \otimes z^{2i}\mapsto \frac{(2i)! (-t)^i \rho}{i! 2^i},\, \rho \otimes z^{2i+1} \mapsto 0$$
        one has $(g_x)= D_x-D_0$
\end{thm}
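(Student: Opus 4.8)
The plan is to carry out the entire computation inside the unique irreducible weight one representation of the theta group, transported to the Heisenberg group by means of the two reducible fibers. First I would invoke Proposition \ref{RedTheta} to fix the isomorphism $\phi\colon\mathcal{G}(\mathcal{L})\xrightarrow{\sim}\mathcal{G}(\alpha_p)$ determined by $D_0$ and $D_\infty$. By Theorems \ref{ThetaRep} and \ref{ThetaRepUniq} the space $\HH^0(E^2,\mathcal{L})$ is the unique absolutely irreducible weight $1$ representation, so $\phi$ identifies it with the standard Heisenberg module $A_{\mathcal{H}}=k[w]/(w^p)$, on which the lift $\tilde{\mathcal{H}}$ coming from $D_0$ acts by translation $w\mapsto w+a$ and $\tilde{\mathcal{H}'}$ acts by multiplication by the characters $w\mapsto\exp(bw)$. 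Since $(g_x)=D_x-D_0$ is equivalent to the statement that the section $s_x$ cutting out $D_x$ corresponds, under $s\mapsto s/s_0$, to the function output by the algorithm, the whole theorem reduces to computing three explicit vectors in $k[w]/(w^p)$.

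Next I would pin down the distinguished vectors. Because $D_0$ is $\mathcal{H}$-invariant (Corollary \ref{RedQuot}), Proposition \ref{ThetaRepPull} shows that $s_0$ is $\tilde{\mathcal{H}}$-invariant, i.e. a translation-invariant element of $k[w]/(w^p)$, hence the constant $1$. Normalizing by $s\mapsto s/s_0$ realizes $\HH^0(E^2,\mathcal{L})$ as the space $W\subset K(E^2)$ of rational functions with poles bounded by $D_0$ and identifies $g=s_\infty/s_0$ with the multiplication-invariant (i.e. $\tilde{\mathcal{H}'}$-invariant) vector, which is $w^{p-1}$. For general $x=[t:1]$ the fiber $D_x$ is invariant under the isotropic line $\mathcal{H}_x\subset K(\mathcal{L})$ that $\phi$ carries to $\{(a,\chi_{ta})\}$; writing down the level subgroup $\tilde{\mathcal{H}}_x$, whose section carries the quadratic cocycle $a\mapsto\exp(\tfrac{t}{2}a^2)$ forced by the commutator pairing, and solving the invariance equation $\exp(\tfrac{t}{2}a^2)\exp(taw)\,f(w+a)=f(w)$ identifies $s_x$ with the Gaussian $\exp(-\tfrac{t}{2}w^2)=\sum_i\tfrac{(-t)^i}{2^i\,i!}w^{2i}$.

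Then I would connect this to translation. By Proposition \ref{ThetaRepPullDiv} the translation co-action $K(E^2)\to K(E^2)\otimes A_{\mathcal{H}}$ restricts on $W$ to the position-translation of the Heisenberg module, and since $A_{\mathcal{H}}\cong k[z]/(z^p)$ is normalized by Lemma \ref{alpha} so that $z$ is primitive, $g_{\mathcal{H}}$ is identified with $(w+z)^{p-1}=\sum_j\binom{p-1}{j}w^{p-1-j}z^j$. Applying the stated projection (kill odd powers of $z$, send $z^{2i}\mapsto\tfrac{(2i)!(-t)^i}{i!\,2^i}$) and using $\binom{p-1}{j}\equiv(-1)^j\pmod p$ collapses the output to a Gaussian; I would then verify that this Gaussian equals $s_x$ up to a nonzero scalar, either by direct coefficient comparison or, more robustly, by checking that the output is a nonzero vector fixed by $\tilde{\mathcal{H}}_x$, whose fixed space is one-dimensional. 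As $g_x$ is then $s_x/s_0$ up to a scalar, its divisor is exactly $D_x-D_0$.

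The hard part will be the bookkeeping needed to make the Schrödinger action, the cocycle $\exp(\tfrac{t}{2}a^2)$, the factorials in the projection, and the coordinate $t$ all match simultaneously. One must track the normalization of $A_{\mathcal{H}}\cong k[z]/(z^p)$ from Lemma \ref{alpha} together with the induced isomorphism $\mathcal{H}'\cong\alpha_p$ furnished by Proposition \ref{RedTheta} and discussed in Remark \ref{InduIso}, because a rescaling of $t$ — equivalently, a relabelling of the slope of the isotropic line $\mathcal{H}_x$ — is precisely what forces the characteristic $p$ factorial identities to produce the constants in the stated map. Reconciling this coordinate choice on $\mathbb{P}^1$ with the combinatorics of $(w+z)^{p-1}$ is the crux; once the normalizations are fixed, the verification that the output is the invariant Gaussian $s_x$ is a finite computation.
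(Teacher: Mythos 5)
Your plan is essentially the paper's argument transported through Cartier--Fourier duality. The paper realizes the unique weight~$1$ representation as functions on $\mathcal{H}^D$, so that $g$ corresponds to the constant $1$ and the geometric translation co-action of $\mathcal{H}$ becomes \emph{multiplication} by $\exp(z\otimes z)$, and the projection output is $\exp(-\tfrac{t}{2}z^2)$; you realize it as functions on $\mathcal{H}$, so that $g$ corresponds to $w^{p-1}$ and the geometric translation co-action is the model translation $w\mapsto w+z$. Your identifications of $s_0$, of $g$ with $w^{p-1}$, and of $g_{\mathcal{H}}$ with $(w+z)^{p-1}$ are all correct in that model, and the concluding step (one-dimensionality of the $\tilde{\mathcal{H}}_x$-invariants via Propositions \ref{ThetaRepPull} and \ref{LevelPullDiv}) is the same as the paper's.

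The gap is precisely the computation you defer as ``a finite computation'': as you have set it up, it fails. In your model the projection of $(w+z)^{p-1}$ is $P(w)=\sum_i \binom{p-1}{2i}\tfrac{(2i)!\,(-t)^i}{i!\,2^i}\,w^{p-1-2i}$, and this is \emph{not} proportional to your claimed invariant vector $\exp(-\tfrac{t}{2}w^2)$. Check $p=5$: one gets $P(w)=w^4+4tw^2+3t^2$, whereas $\exp(-\tfrac{t}{2}w^2)=1+2tw^2+2t^2w^4$ modulo $5$; these are proportional only for $t^2=4$. What $P(w)$ actually is (here $3t^2\exp\bigl(\tfrac{w^2}{2t}\bigr)$, and in general) is the Gaussian with \emph{inverted} parameter --- unsurprisingly, since it is the Fourier transform of the paper's $\exp(-\tfrac{t}{2}z^2)$. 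The source of the mismatch is your parametrization of the isotropic line: you take $\mathcal{H}_x$ to be the graph $\{(a,\chi_{ta})\}$, i.e.\ the image of $s\mapsto(s,ts)$, but under the coordinate convention fixed in the remark following the theorem (the point $[t_0:t_1]$ corresponds to $\alpha_p\xrightarrow{(t_0,t_1)}\mathcal{H}\times\mathcal{H}'$), the point $x=[t:1]$ corresponds to the image of $s\mapsto(ts,s)$. Your invariance equation $\exp(\tfrac{t}{2}a^2)\exp(taw)f(w+a)=f(w)$ is the correct one in the \emph{paper's} model, where the $\mathcal{H}$-factor acts by multiplication; in \emph{your} model, where it acts by translation, the equation attached to $[t:1]$ has translation step $ts$ and character indexed by $s$, and its Gaussian solution has parameter $\pm\tfrac{1}{2t}$ --- which does match $P(w)$. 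So as written your argument either asserts a false proportionality, or, if carried through consistently, proves the formula for the fiber over $[1:t]$ rather than $[t:1]$. Once the slope is transposed, the rest of your argument goes through; the robust route you mention (verifying directly that $P(w)$ is fixed by the correct level subgroup $\tilde{\mathcal{H}}_x$, whose fixed space is one-dimensional) is indeed the way to make that verification uniform in $p$.
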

\begin{proof}
From the divisor $D_\infty$ we can construct a subgroup scheme $\mathcal{H}'\subset E^2$ using Corollary \ref{RedQuot}. Consider the line bundle $\mathcal{L}=\mathcal{O}(D_0)$. There exists a unique level subgroup over $\mathcal{H}$ (resp. $\mathcal{H}'$) in $\mathcal{G}(\mathcal{L})$ that we will by abuse of notation denote using the same letter. From Proposition \ref{RedTheta} we get an isomorphism $\mathcal{G}(\mathcal{L})\cong \mathcal{G}(\mathcal{H})=\mathbb{G}_m\times \mathcal{H} \times \mathcal{H}^D$ that maps $\mathcal{H}$ to $\{1\}\times \mathcal{H} \times \{0\}$ and $\mathcal{H}'$ maps to $\{1\} \times \{0\} \times \mathcal{H}^D$. By Theorem \ref{ThetaRep} there is an absolutely irreducible weight 1 representation of $\mathcal{G}(\mathcal{L})$ on $\HH^0(E^2,\mathcal{L})$. By Theorem \ref{ThetaRepUniq} (or Riemann-Roch) we have $\dim_k(\HH^0(E^2,\mathcal{L}))=p$. On the other hand $\mathcal{G}(\mathcal{H})$ also has a weight 1 representation on the $p$-dimensional $k$-vector space $A_{\mathcal{H}^D}$ given by
$$((c, h, \chi)f)(\chi_0)=c \cdot \chi_0(h)\cdot  f(\chi+\chi_0)$$
for any $k$-scheme $T$ and $c\in \mathbb{G}_m(T), h\in \mathcal{H}(T), \chi\in \mathcal{H}^D(T), \chi_0\in \mathcal{H}^D(T), f\in A_{\mathcal{H}^D}\otimes \HH^0(T, \mathcal{O}_T)= \Hom_{\text{Sch}}(\mathcal{H}^D_T, \mathbb{A}^1_T)$. Then Theorem \ref{ThetaRepUniq} implies that there exists an isomorphism of $k$-vector spaces
$$\psi: A_{\mathcal{H}^D}\stackrel{\sim}{ \longrightarrow} \HH^0(E^2, \mathcal{L})$$
that identifies the two representations. By Schur's lemma $\psi$ is uniquely determined up to a scalar.
\par We have $(A_{\mathcal{H}^D})^{\mathcal{H}^D}=k$. On the other hand we claim that $\HH^0(E^2, \mathcal{L})^{\mathcal{H}'}=k g$. Indeed $g$ gives an element in $\HH^0(E^2, \mathcal{L})=\HH^0(E^2, \mathcal{O}(D_0))$. Furthermore $g$ is $\mathcal{H}'$-invariant by Proposition \ref{LevelPullDiv} because it maps to $1$ under the isomorphism $\HH^0(E^2, \mathcal{O}(D_0))\stackrel{\sim}{\rightarrow} \HH^0(E^2, \mathcal{O}(D_\infty)),\, \rho \mapsto g^{-1} \rho$. This proves the claim. Now the claim implies that we can without loss assume $\psi(1)=g$. Consider now the action of $\mathcal{H}$ on $A_{\mathcal{H}^D}$ given by restricting the representation of $\mathcal{G}(\mathcal{H})$. This determines a map
$$A_{\mathcal{H}^D}\rightarrow A_{\mathcal{H}^D}\otimes A_{\mathcal{H}}\,.$$
It follows from the definition of the representation of $\mathcal{G}(\mathcal{H})$ that this map is given by multiplying with the element in $A_{\mathcal{H}^D}\otimes A_{\mathcal{H}}=A_{\mathcal{H}}^\vee\otimes A_{\mathcal{H}}$ which comes from Cartier duality. To make this explicit notice that we have chosen an isomorphism $\mathcal{H}\cong \alpha_p$. The autoduality of $\alpha_p$ gives an isomorphism $\mathcal{H}^D\cong \alpha_p$. This gives a generator of $A_{\mathcal{H}^D}$ that we will also denote by $z$ (this can not cause any confusion). Then the map
$$A_{\mathcal{H}^D}\rightarrow A_{\mathcal{H}^D}\otimes A_{\mathcal{H}}$$
is given by multiplication with $\exp(z\otimes z)\in A_{\mathcal{H}^D}\otimes A_{\mathcal{H}}$. In particular $1$ maps to $\exp(z\otimes z)$. Here $\exp(\cdot)$ denotes the usual exponential series. Notice that the expression $\exp(z\otimes z)$ is well-defined although $\text{char}(k)=p$ since $z^p=0$.
\par On the other hand $\mathcal{H}$ also acts on $\HH^0(E^2, \mathcal{L})$ by restricting the action of $\mathcal{G}(\mathcal{L})$. This gives a map
$$\HH^0(E^2, \mathcal{L})\rightarrow \HH^0(E^2, \mathcal{L}) \otimes A_{\mathcal{H}}\,.$$
By Proposition \ref{ThetaRepPullDiv} this action is given by the translation action on $K(E^2)$ using the identification
$$\HH^0(E^2,\mathcal{L})=\HH^0(E^2, \mathcal{O}(D_0))=\{\rho \in K(E^2)| (\rho)+D_0 \text{ is effective}\}\,.$$
In particular $g$ maps to $g_{\mathcal{H}}$ under this map. Because the isomorphism $\psi$ is equivariant, one has $(\psi\otimes \id_{A_{\mathcal{H}}})(\exp(z\otimes z))=g_{\mathcal{H}}$.
\par We will now focus on the point $x=[t:1]\in \mathbb{P}^1$. Indeed this point determines a map $\alpha_p \rightarrow \alpha_p \times \alpha_p,\, s\mapsto (ts, s)$. Let us denote by $\mathcal{H}_x$ the image of the composition $\alpha_p\rightarrow \alpha_p \times \alpha_p \cong \mathcal{H}\times \mathcal{H}^D$. By Lemma \ref{MB} and Proposition \ref{LevelPull} there is a unique level subgroup in $\mathcal{G}(\mathcal{H})$ above $\mathcal{H}_x$. We will denote by abuse of notation this level subgroup by $\mathcal{H}_x$. Its image under the isomorphism $\mathcal{G}(\mathcal{H}) \cong \mathcal{G}(\mathcal{L})$ is also denoted $\mathcal{H}_x$. Now the desired rational function $g_x$ will be a generator of the 1-dimensional vector space $\HH^0(E^2, \mathcal{L})^{\mathcal{H}_x}$ (see Proposition \ref{ThetaRepPull}). On the other hand $(A_{\mathcal{H}^D})^{\mathcal{H}_x}$ is generated by the element $\exp(\frac{-tz^2}{2})$, see \cite[proof of Proposition 1.5]{Moret-Bailly}. Therefore we can take $g_x=\psi(\exp(\frac{-t z^2}{2}))$. The theorem follows because $\exp(\frac{-t z^2}{2})\in A_{\mathcal{H}^D}$ is the image of $\exp(z\otimes z)$ under the map
$A_{\mathcal{H}^D}\otimes A_{\mathcal{H}} \rightarrow A_{\mathcal{H}^D}$\\
$\rho\otimes z^{2i}\mapsto  \frac{(2i)! (-t)^i \rho}{i! 2^i},\, \rho\otimes z^{2i+1} \mapsto 0$.
\end{proof}
\begin{rem} Notice that the coordinate system on $\mathbb{P}^1$ used in the previous theorem is not the same as the coordinate system in section 4.1. For the coordinate system in section 4.1 a point $[t_0:t_1]$ corresponds to the embedding $\alpha_p \stackrel{(t_0,t_1)}{\hookrightarrow} \alpha_p^2 \cong E^2[F]$, where the last isomorphism comes from an isomorphism $\alpha_p \cong E[F]$.
\par On the other hand for the coordinate system in the previous theorem a point $[t_0:t_1]$ corresponds to the embedding $\alpha_p \stackrel{(t_0,t_1)}{\hookrightarrow} \alpha_p^2 \cong \mathcal{H}\times \mathcal{H}'=E^2[F]$, where the isomorphism $\alpha_p \cong \mathcal{H}'$ is the one induced by Proposition \ref{RedTheta} from the choice of an isomorphism $\alpha_p\cong \mathcal{H}$.
\par With the first coordinate system one can directly write down the image of $\alpha_p \hookrightarrow E^2$ as a closed subgroup scheme. This is not possible with the second coordinate system. It is possible to compute the change of coordinates matrix, see also Remark \ref{InduIso}.
\end{rem}
We are ready to explain the main algorithm.\\
\begin{algorithm}[H]\label{AlgMB}
\caption{Irreducible fibers of a Moret-Bailly family\label{gen2}}
\DontPrintSemicolon
        \SetKwInOut{Input}{input}
        \SetKwInOut{Output}{output}
        \Input{An hermitian positive definite matrix $f\in M_{2,2}(\mathsf{O})$ inducing a polarization $\eta$ on $E^2$ with kernel $E^2[F]$. Two decompositions $f=f_1+f_2=f_1'+f_2'$ as in Proposition \ref{decomp} corresponding to distinct divisors. A point $x\in \mathbb{P}^1(k)$ such that the fiber $\mathfrak{C}_x$ of the Moret-Bailly family $\mathfrak{C}\rightarrow \mathbb{P}^1$ constructed from $(E^2,\eta)$ is irreducible.}
        \Output{The curve $\mathfrak{C}_x$}
        \Begin{
		$E_i\longleftarrow \ker(\gamma(f_i)),\, E_i'\longleftarrow \ker(\gamma(f_i')),\, i=1,2$\\
		$\mathfrak{D}_0 \longleftarrow E_1+E_2,\, D_{\infty} \longleftarrow E_1'+E_2'$ as a divisor on $E^2$.\\
		Compute the $2$-torsion point $P$, such that $t_P(D_{\infty})\sim \mathfrak{D}_0$ using the formula of Lemma \ref{2-tors}.\\
		$\mathfrak{D}_\infty \longleftarrow t_P(D_\infty)$.\\
        Compute a rational function $g\in K(E^2)$ such that $(g)=\mathfrak{D}_\infty- \mathfrak{D}_0$.\\
        Compute the rational function $g_x$ using the formula from Theorem \ref{Main}.
        The vanishing locus
        $$\mathcal{V}(g_x)\subset E^2$$
        is the divisor $\mathfrak{D}_x$.\\
        Put $\mathfrak{C}_x$ equal to $\mathfrak{D}_x^{\text{norm},(p)}$, i.e., the Frobenius twist of the normalization of $\mathfrak{D}_x$.\\
        Output: $\mathfrak{C}_x$
        }
\end{algorithm}
\begin{proof}
We prove the correctness of the algorithm. Indeed Theorem \ref{Main} gives us a rational function $g_x$ such that $(g_x)=\mathfrak{D}_x - \mathfrak{D}_0$. Therefore $\mathfrak{D}_x = \mathcal{V}(g_x)$. The map $\mathfrak{D}_x\rightarrow \mathfrak{C}_x$ is purely inseparable of degree $p$. By \cite[Tag 0CCX]{Stacks} there is an isomorphism $\mathfrak{C}_x \cong  \mathfrak{D}_x^{\text{norm},(p)}$.
\end{proof}
\begin{rem} The algorithm constructs the divisor $\mathfrak{D}_x$ without making any field extensions. Indeed notice that the condition $F^2+p=0$ implies that all endomorphism and all the 2-torsion points of $E$ are defined over $\mathbb{F}_{p^2}$ because the $p^2$-Frobenius acts trivially. It follows that $\mathfrak{D}\subset E^2$ is defined over $\mathbb{F}_{p^2}$ as a closed subscheme. This implies that $\mathfrak{C}$ and the map $\mathfrak{C}\rightarrow \mathbb{P}^1$ are also defined over $\mathbb{F}_{p^2}$. The algorithm correctly computes the fibers of this map up to isomorphism over the ground field (not over its algebraic closure).
\end{rem}
\begin{rem}
Notice that steps 1-6 are independent of $x$. Thus after performing steps 1-6 we can quickly compute many fibers.
\par In fact we could theoretically even construct the generic fiber (or a model for $\mathfrak{C}\rightarrow \mathbb{P}^1$) by applying the algorithm to the generic point. However this is very expensive in practice because normalizing the very singular curve $\mathfrak{D}_x$ will take too long over a function field. By replacing step 8 with a technique that avoids normalization one can remedy this (see the author's PhD thesis for details).
\end{rem}
\subsection{Examples}
We have implemented Algorithms \ref{AlgDec} and \ref{AlgMB} in MAGMA. The source code is available on GitHub via\\
\url{https://github.com/Andreas-Pieper/Supersingular}. We present some examples that we computed using these programs.
\begin{exmp} %Algorithm \ref{AlgDec} $p=1601$ in 289s.
%p=3137 in 1104s
Consider $p= 1601$ and $B=\left( \frac{-3,-p}{\mathbb{Q}} \right)=\langle 1,i,j,k \rangle_{\mathbb{Q}}$ with multiplication given by $i^2=-3,j^2=-p, ij=k=-ji$ and the maximal order $\mathsf{O}=\Large\langle 1, \frac{1}{2} + \frac{1}{2}i, \frac{1}{2} + \frac{1}{6} i + \frac{1}{2}j + \frac{1}{6}k, -\frac{1}{2} + \frac{1}{6}i - \frac{1}{3} k   \Large\rangle_{\mathbb{Z}} $ of $B$. This maximal order contains $j$, which generates a two-sided ideal of reduced norm $p$, and thus there exists a supersingular elliptic curve $E/\overline{\mathbb{F}}_p$ defined over $\mathbb{F}_p$ with endomorphism algebra $\mathsf{O}$ (see \cite[Lemma 42.4.1]{Voight}). The geometric Frobenius will satisfy $F^2+p=0$ because of the Hasse-Weil bound. Choose the matrix
$$f=\begin{pmatrix}
1601 & 40j\\
-40j & 1601
\end{pmatrix}\in M_{2,2}(\mathsf{O})\,.$$
We apply Algorithm \ref{AlgDec} to $f$ and all the right ideal classes of $\mathsf{O}$ and find $8000=5p-5$ decompositions $f=f_1+f_2$ as in Proposition \ref{decompLinAlg}. The first few are:
$$
f_1\in \Big\lbrace \begin{pmatrix}
1601 & 40j\\
-40j & 1600
\end{pmatrix}, \, \begin{pmatrix}
1054 & \frac{27}{2} - \frac{1}{6}i + \frac{53}{2}j - \frac{1}{6}k\\
 \frac{27}{2} + \frac{1}{6}i - \frac{53}{2} j + \frac{1}{6}k & 1067
\end{pmatrix}$$
$$ \begin{pmatrix}
1094 & 13 + 27j\\
13 - 27j & 1067
\end{pmatrix}, \, \begin{pmatrix}
1054 & \frac{27}{2} + \frac{1}{6} i + \frac{53}{2}j + \frac{1}{6} k\\
\frac{27}{2} - \frac{1}{6} i - \frac{53}{2} j - \frac{1}{6} k & 1067
\end{pmatrix}, \ldots\Big\rbrace\,.$$
The computation took 4min 50s on a 3.10GHz dual Intel processor.
\end{exmp}
\begin{exmp} Consider $p=5$. The field extension $\mathbb{F}_{5^2}/\mathbb{F}_5$ is generated by a primitive third root of unity $\zeta_3\in \mathbb{F}_{5^2}$. We will take the supersingular elliptic curve $E/ \overline{\mathbb{F}}_5: y^2=x^3+1$. Its endomorphism algebra is a maximal order in
$$B=\mathbb{Q}[\zeta_3, F]/(\zeta_3^2+\zeta_3+1, F^2+5, \zeta_3 F - F \zeta_3^2)$$
where $\zeta_3$ is the endomorphism $E\rightarrow E,\, (x,y)\mapsto (\zeta_3 x, y)$ and $F$ is the geometric Frobenius.
\par We apply Algorithm \ref{AlgMB} with input $$f=\begin{pmatrix}
5 & 2 F\\
-2F & 5
\end{pmatrix}\in M_{2,2}(\mathsf{O})$$
$$f_1=\begin{pmatrix}
4 & 2 F\\
-2F & 5
\end{pmatrix},\, f_2=\begin{pmatrix}
1& 0\\
0 & 0
\end{pmatrix}\in M_{2,2}(\mathsf{O})$$
$$f_1'=\begin{pmatrix}
5 & 2 F\\
-2F & 4
\end{pmatrix},\, f_2'=\begin{pmatrix}
0 & 0\\
0 & 1
\end{pmatrix}\in M_{2,2}(\mathsf{O})\,.$$
And $x=[a:1]\in \mathbb{P}^1(\mathbb{F}_{5^4})$, where $a\in \mathbb{F}_{5^4}$ is a root of the irreducible polynomial $x^4 + 4x^2 + 4x + 2 \in \mathbb{F}_5[x]$. 
The resulting curve $\mathfrak{C}_x$ is computed on a 3.10GHz dual Intel processor in 4.5s. We get the hyperelliptic model
$$\mathfrak{C}_x: y^2=(4a^3 + 3a^2 + 1) x^5 + (4 a^3 + a^2 + 4a + 3) x^4 + (3a^3 + 3a^2 + 2a) x^3+ (4a^3 + 4a^2)x^2+$$
$$(4a^3 + 2a^2 + 3a + 3)x + (3a^3 + 3a + 3)\,.$$
N.B.: The coefficients on the right are expressed in the $\mathbb{F}_5$-basis $a^3,a^2,a,1$ of $\mathbb{F}_{5^4}$. Replacing $a$ by another element of $\mathbb{F}_{5^4}$ does not give a curve with a supersingular Jacobian in general.
\end{exmp}
\begin{rem}
The matrices $f$ describing the polarization $\eta$ were found by hand in these examples. In general for a fixed $p$ there are finitely many choices for $f$ up to conjugation by $\text{Gl}_2(\mathsf{O})$. These are in bijection with the set of isomorphism classes of quaternion hermitian lattices in a fixed genus. There is an algorithm which computes a list of representatives for the isomorphism classes. This uses the so-called method of neighboring lattices. The method was developed by M. Kneser for the classification of quadratic lattices over $\mathbb{Z}$ (see \cite{Kneser1}).
\par The method generalizes to quaternion hermitian lattices by using the general strong approximation theorem of M. Kneser \cite{Kneser2} applied to the group of isometries of a quaternion hermitian lattice. The quaternion hermitian case is less technical because the latter group is simply-connected whereas $\text{SO}_n$ is not.
\end{rem}
\bibliographystyle{plain}
\bibliography{bibl.bib}

\begin{thebibliography}{10}

\bibitem{SGA3}
Michel Demazure and Alexander Grothendieck.
\newblock {\em Sch\'emas en {G}roupes I (SGA 3, vol. I)}.
\newblock Springer, 1970.

\bibitem{Kaplansky}
Irving Kaplansky.
\newblock Submodules of quaternion algebras.
\newblock {\em Proc. London Math. Soc.}, (3):219–232, 1969.

\bibitem{KatsuraOort}
Toshiyuki Katsura and Frans Oort.
\newblock Families of {S}upersingular {A}belian {S}urfaces.
\newblock {\em Compositio Mathematica}, 62:107--167, 1987.

\bibitem{Kneser1}
Martin Kneser.
\newblock Klassenzahlen definiter quadratischer {F}ormen.
\newblock {\em Archiv der Mathematik}, 1957.

\bibitem{Kneser2}
Martin Kneser.
\newblock Strong approximation.
\newblock In {\em Algebraic Groups and Discontinuous Subgroups (Proc. Sympos.
  Pure Math., Boulder, Colo., 1965)}, pages 187--196. American Mathematical
  Society, 1966.

\bibitem{Lam}
Tsit-Yuen Lam.
\newblock {\em Lectures on {M}odules and {R}ings}.
\newblock Springer, 1999.

\bibitem{Li-Oort}
Ke-Zheng Li and Frans Oort.
\newblock {\em Moduli of {S}upersingular {A}belian {V}arieties}.
\newblock Springer Lecture Notes in Mathematics, 1998.

\bibitem{ThetaIsog}
David Lubicz and Damien Robert.
\newblock Computing isogenies between {A}belian {V}arieties.
\newblock {\em Compositio Mathematica}, 148(5), 2012.

\bibitem{Moret-Bailly}
Laurent Moret-Bailly.
\newblock Familles de courbes et de vari\'et\'es ab\'eliennes sur
  $\mathbb{P}^1$, {II} {E}xemples.
\newblock {\em Ast\'erisque}, 86, 1981.

\bibitem{EqDefAV}
David Mumford.
\newblock On the equations defining abelian varieties {I}.
\newblock {\em Inventiones mathematicae}, (1):287--354, 1966.

\bibitem{MumAV}
David Mumford.
\newblock {\em Abelian {V}arieties}.
\newblock Hindustan book agency, 2nd edition, 1974.

\bibitem{Sekiguchi}
Tsutomu Sekiguchi.
\newblock {On projective normality of abelian varieties II}.
\newblock {\em Journal of the Mathematical Society of Japan}, 29(4):709 -- 727,
  1977.

\bibitem{Stacks}
The {Stacks Project Authors}.
\newblock \textit{Stacks Project}.
\newblock \url{https://stacks.math.columbia.edu}, 2018.

\bibitem{vdGMooAV}
Gerard van~der Geer and Ben Moonen.
\newblock Abelian {V}arieties (draft book).
\newblock \url{https://www.math.ru.nl/~bmoonen/research.html#bookabvar}.

\bibitem{Voight}
John Voight.
\newblock {\em Quaternion algebras}.
\newblock Springer, 2020.

\end{thebibliography}
\end{document}